\let\saved@bibitem\@bibitem\makeatother 
\let\@bibitem\saved@bibitem\makeatother 
\newtheorem{theorem}{Theorem}
\newtheorem{lemma}{Lemma}
\newtheorem{remark}{Remark}
\theoremstyle{definition}
\newtheorem{assume}{Assumption}
\newcommand{\func}[3]{\ensuremath{#1 : #2 \rightarrow #3}}
\newcommand{\norm}[1]{\ensuremath{\left\| #1 \right\|}}
\newcommand{\suchthat}{\mathrel{}\middle|\mathrel{}}
\newcommand{\optunc}[2]{\underset{#1}{\text{minimize}} ~~ #2}
\newcommand{\argoptunc}[2]{\underset{#1}{\arg\min} ~~ #2}
\newcommand{\optconTwo}[4]{
\begin{aligned}
& \underset{#1}{\text{minimize}}
& & #2 \\
& \text{subject to} & & #3 \\
& & & #4
\end{aligned}
}
\newcommand{\optconThree}[5]{
\begin{aligned}
& \underset{#1}{\text{minimize}}
& & #2 \\
& \text{subject to} & & #3 \\
& & & #4 \\
& & & #5
\end{aligned}
}
\newcommand{\pder}[2]{\ensuremath{\frac{\partial #1}{\partial #2}}}
\newcommand{\Ccal}{\ensuremath{\mathcal{C}}}
\newcommand{\Dcal}{\ensuremath{\mathcal{D}}}
\newcommand{\Ecal}{\ensuremath{\mathcal{E}}}
\newcommand{\Lcal}{\ensuremath{\mathcal{L}}}
\newcommand{\Pcal}{\ensuremath{\mathcal{P}}}
\newcommand{\Rcal}{\ensuremath{\mathcal{R}}}
\newcommand{\Scal}{\ensuremath{\mathcal{S}}}
\newcommand{\Ucal}{\ensuremath{\mathcal{U}}}
\newcommand{\Ycal}{\ensuremath{\mathcal{Y}}}
\newcommand{\Vboldcal}{\ensuremath{\boldsymbol{\mathcal{V}}}}
\newcommand{\Nbb}{\ensuremath{\mathbb{N} }}
\newcommand{\Rbb}{\ensuremath{\mathbb{R} }}
\newcommand\Abm{{\ensuremath{\bm{A}}}}
\newcommand\Dbm{{\ensuremath{\bm{D}}}}
\newcommand\Hbm{{\ensuremath{\bm{H}}}}
\newcommand\Pbm{{\ensuremath{\bm{P}}}}
\newcommand\Ubm{{\ensuremath{\bm{U}}}}
\newcommand\Vbm{{\ensuremath{\bm{V}}}}
\newcommand\cbm{{\ensuremath{\bm{c}}}}
\newcommand\dbm{{\ensuremath{\bm{d}}}}
\newcommand\gbm{{\ensuremath{\bm{g}}}}
\newcommand\rbm{{\ensuremath{\bm{r}}}}
\newcommand\sbm{{\ensuremath{\bm{s}}}}
\newcommand\ubm{{\ensuremath{\bm{u}}}}
\newcommand\vbm{{\ensuremath{\bm{v}}}}
\newcommand\wbm{{\ensuremath{\bm{w}}}}
\newcommand\xbm{{\ensuremath{\bm{x}}}}
\newcommand\ybm{{\ensuremath{\bm{y}}}}
\newcommand\zbm{{\ensuremath{\bm{z}}}}
\newcommand\lambdabold{{\ensuremath{\boldsymbol{\lambda}}}}
\newcommand\deltabold{{\ensuremath{\boldsymbol{\delta}}}}
\newcommand\mubold{{\ensuremath{\boldsymbol{\mu}}}}
\newcommand\rhobold{{\ensuremath{\boldsymbol{\rho}}}}
\newcommand\taubold{{\ensuremath{\boldsymbol{\tau}}}}
\newcommand\nubold{{\ensuremath{\boldsymbol{\nu}}}}
\newcommand\Phibold{{\ensuremath{\boldsymbol{\Phi}}}}
\newcommand\Xibold{{\ensuremath{\boldsymbol{\Xi}}}}
\newcommand\zerobold{\ensuremath{\mathbf{0}}}
\newcommand\onebold{\ensuremath{\mathbf{1}}}
\pgfplotsset{compat=1.9}
\pgfplotsset{select coords between index/.style 2 args={
    x filter/.code={
        \ifnum\coordindex<#1\fi
        \ifnum\coordindex>#2\fi
    }
}}
\tikzset{
 invisible/.style={opacity=0},
 visible on/.style={alt={#1{}{invisible}}},
 alt/.code args={<#1>#2#3}{%
   \alt<#1>{\pgfkeysalso{#2}}{\pgfkeysalso{#3}}
 },
}
\newcommand{\colorbarMatlabParula}[5]{
\begin{tikzpicture}
\begin{axis}[
   hide axis, scale only axis,
   height=0pt, width=0pt,
   colormap={parula}{rgb255=(62,38,168) rgb255=(62,39,172) rgb255=(63,40,175) rgb255=(63,41,178) rgb255=(64,42,180) rgb255=(64,43,183) rgb255=(65,44,186) rgb255=(65,45,189) rgb255=(66,46,191) rgb255=(66,47,194) rgb255=(67,48,197) rgb255=(67,49,200) rgb255=(67,50,202) rgb255=(68,51,205) rgb255=(68,52,208) rgb255=(69,53,210) rgb255=(69,55,213) rgb255=(69,56,215) rgb255=(70,57,217) rgb255=(70,58,220) rgb255=(70,59,222) rgb255=(70,61,224) rgb255=(71,62,225) rgb255=(71,63,227) rgb255=(71,65,229) rgb255=(71,66,230) rgb255=(71,68,232) rgb255=(71,69,233) rgb255=(71,70,235) rgb255=(72,72,236) rgb255=(72,73,237) rgb255=(72,75,238) rgb255=(72,76,240) rgb255=(72,78,241) rgb255=(72,79,242) rgb255=(72,80,243) rgb255=(72,82,244) rgb255=(72,83,245) rgb255=(72,84,246) rgb255=(71,86,247) rgb255=(71,87,247) rgb255=(71,89,248) rgb255=(71,90,249) rgb255=(71,91,250) rgb255=(71,93,250) rgb255=(70,94,251) rgb255=(70,96,251) rgb255=(70,97,252) rgb255=(69,98,252) rgb255=(69,100,253) rgb255=(68,101,253) rgb255=(67,103,253) rgb255=(67,104,254) rgb255=(66,106,254) rgb255=(65,107,254) rgb255=(64,109,254) rgb255=(63,110,255) rgb255=(62,112,255) rgb255=(60,113,255) rgb255=(59,115,255) rgb255=(57,116,255) rgb255=(56,118,254) rgb255=(54,119,254) rgb255=(53,121,253) rgb255=(51,122,253) rgb255=(50,124,252) rgb255=(49,125,252) rgb255=(48,127,251) rgb255=(47,128,250) rgb255=(47,130,250) rgb255=(46,131,249) rgb255=(46,132,248) rgb255=(46,134,248) rgb255=(46,135,247) rgb255=(45,136,246) rgb255=(45,138,245) rgb255=(45,139,244) rgb255=(45,140,243) rgb255=(45,142,242) rgb255=(44,143,241) rgb255=(44,144,240) rgb255=(43,145,239) rgb255=(42,147,238) rgb255=(41,148,237) rgb255=(40,149,236) rgb255=(39,151,235) rgb255=(39,152,234) rgb255=(38,153,233) rgb255=(38,154,232) rgb255=(37,155,232) rgb255=(37,156,231) rgb255=(36,158,230) rgb255=(36,159,229) rgb255=(35,160,229) rgb255=(35,161,228) rgb255=(34,162,228) rgb255=(33,163,227) rgb255=(32,165,227) rgb255=(31,166,226) rgb255=(30,167,225) rgb255=(29,168,225) rgb255=(29,169,224) rgb255=(28,170,223) rgb255=(27,171,222) rgb255=(26,172,221) rgb255=(25,173,220) rgb255=(23,174,218) rgb255=(22,175,217) rgb255=(20,176,216) rgb255=(18,177,214) rgb255=(16,178,213) rgb255=(14,179,212) rgb255=(11,179,210) rgb255=(8,180,209) rgb255=(6,181,207) rgb255=(4,182,206) rgb255=(2,183,204) rgb255=(1,183,202) rgb255=(0,184,201) rgb255=(0,185,199) rgb255=(0,186,198) rgb255=(1,186,196) rgb255=(2,187,194) rgb255=(4,187,193) rgb255=(6,188,191) rgb255=(9,189,189) rgb255=(13,189,188) rgb255=(16,190,186) rgb255=(20,190,184) rgb255=(23,191,182) rgb255=(26,192,181) rgb255=(29,192,179) rgb255=(32,193,177) rgb255=(35,193,175) rgb255=(37,194,174) rgb255=(39,194,172) rgb255=(41,195,170) rgb255=(43,195,168) rgb255=(44,196,166) rgb255=(46,196,165) rgb255=(47,197,163) rgb255=(49,197,161) rgb255=(50,198,159) rgb255=(51,199,157) rgb255=(53,199,155) rgb255=(54,200,153) rgb255=(56,200,150) rgb255=(57,201,148) rgb255=(59,201,146) rgb255=(61,202,144) rgb255=(64,202,141) rgb255=(66,202,139) rgb255=(69,203,137) rgb255=(72,203,134) rgb255=(75,203,132) rgb255=(78,204,129) rgb255=(81,204,127) rgb255=(84,204,124) rgb255=(87,204,122) rgb255=(90,204,119) rgb255=(94,205,116) rgb255=(97,205,114) rgb255=(100,205,111) rgb255=(103,205,108) rgb255=(107,205,105) rgb255=(110,205,102) rgb255=(114,205,100) rgb255=(118,204,97) rgb255=(121,204,94) rgb255=(125,204,91) rgb255=(129,204,89) rgb255=(132,204,86) rgb255=(136,203,83) rgb255=(139,203,81) rgb255=(143,203,78) rgb255=(147,202,75) rgb255=(150,202,72) rgb255=(154,201,70) rgb255=(157,201,67) rgb255=(161,200,64) rgb255=(164,200,62) rgb255=(167,199,59) rgb255=(171,199,57) rgb255=(174,198,55) rgb255=(178,198,53) rgb255=(181,197,51) rgb255=(184,196,49) rgb255=(187,196,47) rgb255=(190,195,45) rgb255=(194,195,44) rgb255=(197,194,42) rgb255=(200,193,41) rgb255=(203,193,40) rgb255=(206,192,39) rgb255=(208,191,39) rgb255=(211,191,39) rgb255=(214,190,39) rgb255=(217,190,40) rgb255=(219,189,40) rgb255=(222,188,41) rgb255=(225,188,42) rgb255=(227,188,43) rgb255=(230,187,45) rgb255=(232,187,46) rgb255=(234,186,48) rgb255=(236,186,50) rgb255=(239,186,53) rgb255=(241,186,55) rgb255=(243,186,57) rgb255=(245,186,59) rgb255=(247,186,61) rgb255=(249,186,62) rgb255=(251,187,62) rgb255=(252,188,62) rgb255=(254,189,61) rgb255=(254,190,60) rgb255=(254,192,59) rgb255=(254,193,58) rgb255=(254,194,57) rgb255=(254,196,56) rgb255=(254,197,55) rgb255=(254,199,53) rgb255=(254,200,52) rgb255=(254,202,51) rgb255=(253,203,50) rgb255=(253,205,49) rgb255=(253,206,49) rgb255=(252,208,48) rgb255=(251,210,47) rgb255=(251,211,46) rgb255=(250,213,46) rgb255=(249,214,45) rgb255=(249,216,44) rgb255=(248,217,43) rgb255=(247,219,42) rgb255=(247,221,42) rgb255=(246,222,41) rgb255=(246,224,40) rgb255=(245,225,40) rgb255=(245,227,39) rgb255=(245,229,38) rgb255=(245,230,38) rgb255=(245,232,37) rgb255=(245,233,36) rgb255=(245,235,35) rgb255=(245,236,34) rgb255=(245,238,33) rgb255=(246,239,32) rgb255=(246,241,31) rgb255=(246,242,30) rgb255=(247,244,28) rgb255=(247,245,27) rgb255=(248,247,26) rgb255=(248,248,24) rgb255=(249,249,22) rgb255=(249,251,21) },
   colorbar horizontal,
   point meta min=#1, point meta max=#5,
   colorbar style={width=10cm, xtick={#1,#2,#3,#4,#5}}
]
\addplot [draw=none] coordinates {(0,0)};
\end{axis}
\end{tikzpicture}
}
\theoremstyle{definition}
\newtheorem{corollary}{Corollary}[theorem]
\newcommand{\paren}[1]{\ensuremath{\left( #1 \right)}}
\newcommand{\bracket}[1]{\ensuremath{\left[ #1 \right]}}
\newcommand{\curlyb}[1]{\ensuremath{\left\{ #1 \right\} }}
\newcommand{\sens}[1]{\ensuremath{\pder{#1}{\mubold}}}
\newcommand{\abs}[1]{\ensuremath{\left| #1 \right|}}
\newcommand{\trbm}{\Tilde{\rbm}}
\newcommand{\hrbm}{\hat{\rbm}}
\newcommand{\hlam}{\hat{\lambdabold}}
\newcommand{\rlam}{\rbm^\lambda}
\newcommand{\glam}{\gbm^\lambda}
\newcommand{\hrlam}{\hat{\rbm}^\lambda}
\newcommand{\trlam}{\tilde{\rbm}^\lambda}
\newcommand\thetabold{{\ensuremath{\boldsymbol{\theta}}}}
\begin{document}
\title{An augmented Lagrangian trust-region method with inexact gradient evaluations to accelerate constrained optimization problems using model hyperreduction}

\author[rvt1]{Tianshu Wen\fnref{fn1}}
\ead{twen2@nd.edu}

\author[rvt1]{Matthew J. Zahr\fnref{fn2}\corref{cor1}}
\ead{mzahr@nd.edu}

\address[rvt1]{Department of Aerospace and Mechanical Engineering, University
               of Notre Dame, Notre Dame, IN 46556, United States}
\cortext[cor1]{Corresponding author}

\fntext[fn1]{Graduate Student, Department of Aerospace and Mechanical
             Engineering, University of Notre Dame}
\fntext[fn2]{Assistant Professor, Department of Aerospace and Mechanical
             Engineering, University of Notre Dame}

\begin{keyword} 
PDE-constrained optimization, reduced-order model, constrained optimization, augmented Lagrangian method, trust-region method, on-the-fly sampling
\end{keyword}

\begin{abstract}
We present an augmented Lagrangian trust-region method to efficiently solve constrained optimization problems governed by large-scale nonlinear systems with application to partial differential equation-constrained optimization. At each major augmented Lagrangian iteration, the expensive optimization subproblem involving the full nonlinear system is replaced by an empirical quadrature-based hyperreduced model constructed on-the-fly. To ensure convergence of these inexact augmented Lagrangian subproblems, we develop a bound-constrained trust-region method that allows for inexact gradient evaluations, and specialize it to our specific setting that leverages hyperreduced models. This approach circumvents a traditional training phase because the models are built on-the-fly in accordance with the requirements of the trust-region convergence theory. Two numerical experiments (constrained aerodynamic shape design) demonstrate the convergence and efficiency of the proposed work. A speedup of 12.7x (for all computational costs, even costs traditionally considered ``offline'' such as snapshot collection and data compression) relative to a standard optimization approach that does not leverage model reduction is shown.
\end{abstract}

\maketitle
\section{Introduction}
\label{sec:intro}
Optimization problems involving partial differential equations (PDEs) arise in many fields for product design, risk control, cost management, etc. Often, the optimization problem may have additional side constraints (i.e., constraints other than the PDE constraint), which increases the complexity of the problem and the computational cost required to solve it. These problems can be prohibitively expensive for PDE discretizations with many degrees of freedom (DoFs) due to repeated queries to the expensive PDE solver.

To mitigate the large computational cost of such problems, surrogate-based approaches have been developed to reduce the computational cost of expensive PDE-constrained optimization problems. Surrogate models utilize a relatively cheap model to replace the original one throughout the optimization iterations to reduce the computational cost. There are many types of surrogate models, e.g.,  adaptive spatial discretization \cite{ziems_adaptive_2011}, partially converged solutions \cite{forrester_optimization_2006,zahr_adaptive_2016},
response surfaces \cite{forrester_recent_2009}, projection-based reduced-order models (ROM) \cite{arian_trust-region_2000, qian_certified_2017, zahr_progressive_2015,zahr_efficient_2019, yano_globally_2021, heinkenschloss_reduced_2022, ivagnes_shape_2023, khamlich_physics-based_2023, donoghue_multi-fidelity_2022, barnett_mitigating_2023, zucatti_adaptive_2024}, and machine learning \cite{tao_application_2019, lee_model_2020, zhang_application_2022, wen2023reducedorder, bonneville2024comprehensive}, to name a few. In this work, we consider the projection-based model reduction with empirical quadrature procedure (EQP) \cite{yano_discontinuous_2019} hyperreduction (to accelerate assembly of nonlinear terms) as surrogates to accelerate the optimization iterations.


Traditionally, computational efficiency has been realized in the context of projection-based model reduction by leveraging an \textit{offline/online} approach, whereby a ROM is trained in an expensive offline phase and queried many times in an inexpensive online phase. Several approaches have been developed to utilize ROMs for large-scale optimization that preserve the offline/online decomposition \cite{legresley_airfoil_2000, manzoni_shape_2012, ripepi_reduced-order_2018,trehan_trajectory_2016,stefanescu_poddeim_2015,yang_efficient_2017,esmaeili_generalized_2020,amsallem_design_2015,choi_gradient-based_2020,scheffold_vibration_2018,renganathan_koopman-based_2020}. In these approaches, the ROM is constructed by sampling the full-fidelity model at many training points in the parameter space and the optimization problem is solved with the ROM instead of the original model. These approaches involve a computationally intensive offline phase that must train the ROM in all regions of the parameter space that may be visited by the optimizer, which can be difficult to amortize in the online phase. In addition, the optimal solution found is a critical point of the ROM, not the original model. On the contrary, the \textit{on-the-fly} methods in \cite{arian_trust-region_2000, yue_accelerating_2013, agarwal_trust-region_2013, zahr_progressive_2015, zahr_adaptive_2016, qian_certified_2017, zahr_efficient_2019, yano_globally_2021,keil_non-conforming_2021,banholzer_trust_2022, donoghue_multi-fidelity_2022, wen_globally_2023} do not require an offline training phase because they are built during the solution of the optimization problem. Furthermore, on-the-fly methods can guarantee global convergence to a critical point of the original model \cite{zahr_adaptive_2016,qian_certified_2017,yano_globally_2021,banholzer_trust_2022, wen_globally_2023}. However, these methods have been developed for problems without side constraints.

A common approach to solve side-constrained problems is the \textit{augmented Lagrangian method (ALM)} or \textit{method of multipliers} first proposed by Hestenes \cite{Hestenes.1969} and Powell \cite{Powell.1978}, and since studied by many authors \cite{Rockafellar.1973, bertsekas1989augmented, Birgin.2008}. In recent decades, ALM and its variations have been used to solve increasingly complex optimization problems (\cite{fortin.1983,Rodriguez.1998, Zhao.2010, Chan.2011, Li.2013, Wang.2014, Curtis.2015}). We consider the \textit{bound-constrained Lagrangian (BCL)} method, which is the foundation of the LANCELOT package \cite{conn_lancelot}, due to its simplicity and effectivity in the field of nonlinear programming. The BCL method converts the original constrained problem into an unconstrained or bound-constrained one. It utilizes an appropriate solver, such as modified trust-region methods \cite{Conn.1988, Lin.1999, Conn.1988lq, aguiloa_trust_nodate} for bound-constrained problems, to solve the augmented Lagrangian subproblem. In this work, we adapt the on-the-fly trust-region method developed in \cite{wen_globally_2023} to solve the augmented Lagrangian subproblem to yield a globally convergent method for using EQP-based surrogates to accelerate optimization problems with side constraints.

The main contributions of this work are three-fold. First, we develop a novel trust-region method for bound-constrained optimization problems that allows for inexact gradient evaluations equipped with asymptotic error bound of \cite{kouri_trust-region_2013,kouri_inexact_2021}. We use this method to solve the augmented Lagrangian subproblems that arise for optimization problems with general side constraints. Second, we extend the EQP method of \cite{yano_discontinuous_2019,wen_globally_2023} with additional linear constraints (training phase) that ensure the optimization Lagrangian and side constraints are well-approximated on the reduced quadrature rule. Finally, we specialize the proposed trust-region method to the setting where EQP surrogates built on-the-fly are used as the trust-region model (EQP/BTR). By adaptively building the reduced basis and using EQP tolerances rigorously informed by the trust-region convergence theory, the EQP/BTR avoids a potentially expensive training phase and global convergence is guaranteed. At each augmented Lagrangian step (outer loop), this EQP/BTR method is used to solve the augmented Lagrangian subproblem (inner loop) for problems with general side constraints. Global convergence and the efficiency of the method is demonstrated on two aerodynamic shape optimization examples.

The remainder of this paper is organized as follows. Section \ref{sec:pdeopt} introduces the governing nonlinear system of equations, poses the general optimization problem with side constraints, and formulates the reduced-space augmented Lagrangian subproblem. Section~\ref{sec:hyperreduction} introduces projection-based model reduction and EQP-based hyperreduction, including the extension of the method in \cite{wen_globally_2023} with augmented Lagrangian terms and relevant error estimates. Section~\ref{sec:trammo} introduces the new bound-constrained trust-region methods that allows for inexact gradient evaluations and asymptotic error bounds, a brief review of the augmented Lagrangian method in \cite{nocedal_numerical_2006}, and the proposed EQP/BTR method and establishes its global convergence condition. Finally, Section \ref{sec:numexp} demonstrates the efficiency and convergence of the proposed method on two aerodynamic shape optimization problems.

\section{Problem formulation}
\label{sec:pdeopt}
In this section, we formulate the governing high-dimensional optimization problem that we aim to accelerate using model hyperreduction in later sections. We consider general optimization problems constrained by a large-scale nonlinear system of equations, although our primary interest is fully discrete PDE-constrained optimization. To this end, we introduce the governing nonlinear system of equations (Section~\ref{sec:pdeopt:gov_eqn}) and the formulation of the complete optimization problem (Section~\ref{sec:pdeopt:auglag}), including an augmented Lagrangian formulation of the problem and adjoint method to compute gradients.

\subsection{Governing equations}
\label{sec:pdeopt:gov_eqn}
We consider a parametrized, large-scale system of nonlinear equations that we will refer to as the high-dimensional model (HDM):
given a collection of system parameters
$\mubold\in\Dcal\subset\Rbb^{N_\mubold}$, find $\ubm^\star\in\Rbb^{N_\ubm}$ such that
\begin{equation}\label{eqn:hdm_res}
    \rbm(\ubm^\star, \mubold) = \zerobold,
\end{equation}
where $\ubm^\star$ is the primal solution implicitly defined as the solution of (\ref{eqn:hdm_res}) and  $\rbm: \Rbb^{N_\ubm}\times\Dcal \mapsto \Rbb^{N_\ubm}$ with $\rbm : (\ubm, \mubold) \mapsto \rbm(\ubm,\mubold)$ denotes the residual. We assume that for every $\mubold\in\Dcal$, there exists a unique primal solution $\ubm^\star=\ubm^\star(\mubold)$ satisfying (\ref{eqn:hdm_res}) and that the implicit map $\mubold \mapsto \ubm^\star(\mubold)$ is continuously differentiable. In most practical applications, the dimension $N_\ubm$ is large, which causes (\ref{eqn:hdm_res}) to be computationally expensive to solve. We focus on the case where the residual arises from the high-fidelity discretization of a parametrized, partial differential equation (PDE), although the developments in this work generalize to naturally discrete systems. Furthermore, we assume the system ($\Omega$) consists of $N_\mathtt{e}$ elements ($\Omega_e$), i.e., $\Omega=\cup_{e=1}^{N_\mathtt{e}} \Omega_e$, and the residual can be written as an assembly of element residuals
\begin{equation}\label{eqn:hdm_elem_res}
    \rbm(\ubm, \mubold)  
    = \sum_{e=1}^{N_\mathtt{e}} \Pbm_e \rbm_e \paren{\ubm_e, \ubm_e', \mubold},
\end{equation}
where $\rbm_e: \Rbb^{N_\ubm^\mathtt{e}} \times \Rbb^{N_\ubm^{\mathtt{e}'}}\times\Dcal \rightarrow \Rbb^{N_\ubm^\mathtt{e}}$ with
$\rbm_e : (\ubm_e, \ubm_e', \mubold) \mapsto \rbm_e(\ubm_e,\ubm_e',\mubold)$
is the residual contribution of element $e$, $\ubm_e\in \Rbb^{N_\ubm^\mathtt{e}}$ are the DoFs associated with element
$e$, $\ubm_e'\in\Rbb^{N_\ubm^{\mathtt{e}'}}$ are the DoFs associated with elements neighboring element $e$ (if applicable),
and $\Pbm_e \in \Rbb^{N_\ubm\times N_\ubm^\mathtt{e}}$ is the assembly operator that maps element DoFs for
element $e$ to global DoFs. The element and global DoFs are related via assembly operators
$\ubm_e = \Pbm_e^T \ubm$ and $\ubm_e' = (\Pbm_e')^T\ubm$, where $\Pbm_e' \in \Rbb^{N_\ubm\times N_\ubm^{\mathtt{e}'}}$
is an assembly operator that maps element DoFs from neighbors of element $e$ to the corresponding
global DoFs. This structure will be used to facilitate hyperreduction of the nonlinear system of equations.

\begin{remark}
\label{rem:govern}
There are many PDE discretizations that possess the elemental decomposition
\ref{eqn:hdm_elem_res}, including cell-centered finite volume and most
finite-element-based discretizations, as well as naturally discrete systems
(e.g., direct stiffness analysis of trusses). In this work, use a discontinuous
Galerkin (DG) discretizations of the Euler equations of gasdynamics.
\end{remark}


\subsection{Optimization formulation}\label{sec:pdeopt:auglag}
Now, we pose the constrained optimization problem considered in this work as
\begin{equation}\label{eqn:hdm_full_space}
    \optconThree{\ubm \in \Rbb^{N_\ubm}, \mubold \in \Dcal}{j(\ubm, \mubold)}{\cbm(\ubm, \mubold) = \zerobold}{\rbm(\ubm, \mubold)=\zerobold}{\mubold_l \leq \mubold \leq \mubold_u,}
\end{equation}
where $\func{j}{\Rbb^{N_\ubm}\times\Dcal}{\Rbb}$ is the quantity to be minimized,
$\cbm : (\ubm,\mubold) \rightarrow \cbm(\ubm, \mubold)$ is a vector of $N_\cbm$
side constraints (i.e., constraints other than the governing equation $\rbm$
involving the primal variable $\ubm$), and $\mubold_l$ and $\mubold_u$ are vectors
of lower and upper bounds, respectively, on the design parameters. In the setting
where the residual $\rbm$ corresponds to a PDE discretization, this is a
\textit{discrete PDE-constrained optimization problem} (discretize-then-optimize).
We assume the objective function and side constraints can be written as a summation of
element contributions, i.e.,
\begin{equation}
 j(\ubm,\mubold) = \sum_{e=1}^{N_\mathtt{e}} j_e(\ubm_e,\mubold), \qquad
 \cbm(\ubm,\mubold) = \sum_{e=1}^{N_\mathtt{e}} \cbm_e(\ubm_e,\mubold),
\end{equation}
where $j_e : \Rbb^{N_\ubm^\mathtt{e}} \times \Dcal \rightarrow \Rbb$ and
$\cbm_e : \Rbb^{N_\ubm^\mathtt{e}} \times \Dcal \rightarrow \Rbb^{N_\cbm}$
are the element contributions of element $e$ to the objective function and
side constraints, respectively. Such a decomposition commonly arises in
PDE-constrained optimization applications where the objective and side constraints
are defined as integrals over the computational domain or its boundary.

\begin{remark}
The optimization problem (\ref{eqn:hdm_full_space}) is sufficiently general to
incorporate inequality side constraints of the form $\dbm(\ubm,\mubold) \geq 0$.
In this case, the parameter vector $\mubold = (\nubold, \sbm)$, where $\nubold$
are design parameters from the governing equations and $\sbm$ are slack variables
\cite{nocedal_numerical_2006}. In this case, the side constraints read
$\cbm(\ubm,\mubold) = \dbm(\ubm, \mubold) - \sbm$ and the lower bound reads
$\mubold_l = (\nubold_l, \zerobold)$, where $\nubold_l$ is the vector of
lower bounds for the design parameters $\nubold$.
\end{remark}

We convert (\ref{eqn:hdm_full_space}) to an optimization problem without side constraints using the classic augmented Lagrangian (AL) function, $\func{\ell}{\Rbb^{N_\ubm}\times\Dcal\times\Rbb^{N_\cbm}\times\Rbb}{\Rbb}$, defined as
\begin{equation}\label{eqn:auglag}
  \ell : (\ubm, \mubold; \thetabold, \tau) \mapsto
        \ell^{\mathtt{L}}(\ubm,\mubold;\thetabold) + \frac{\tau}{2} \norm{\cbm(\ubm, \mubold)}^2_2
\end{equation}
where the Lagrangian part, $\func{\ell^{\mathtt{L}}}{\Rbb^{N_\ubm}\times\Dcal\times\Rbb^{N_\cbm}}{\Rbb}$, is defined as 
\begin{equation}
    \ell^{\mathtt{L}} : (\ubm,\mubold;\thetabold) \mapsto j(\ubm, \mubold) - \thetabold^T \cbm(\ubm, \mubold),
\end{equation}
where $\thetabold \in \Rbb^{N_\cbm}$ are Lagrangian multiplier estimates, and $\tau \in \Rbb$ denotes the penalty parameter. Then, we replace the optimization problem in (\ref{eqn:hdm_full_space}) with a sequence of optimization problems of the form
\begin{equation}\label{eqn:hdm_full_space_auglag}
    \optconTwo{\ubm \in \Rbb^{N_\ubm}, \mubold \in \Dcal}{\ell(\ubm, \mubold;\thetabold,\tau)}{\rbm(\ubm, \mubold)=\zerobold}{\mubold_l \leq \mubold \leq \mubold_u,}
\end{equation}
where $\thetabold$ and $\tau$ are fixed. We eliminate the HDM constraint by
restricting the primal variable to the solution manifold of $\rbm$, i.e., define 
$f: \Dcal \times \Rbb^{N_\cbm} \times \Rbb \rightarrow \Rbb$ 
such that
\begin{equation}
    f : (\mubold;\thetabold,\tau) \mapsto \ell(\ubm^\star(\mubold), \mubold;\thetabold,\tau),
\end{equation}
which leads to the following bound-constrained, reduced-space optimization problem
\begin{equation}\label{eqn:auglag_reduced}
    \optunc{\mubold \in \Ccal}{f(\mubold;\thetabold,\tau) \coloneqq \ell(\ubm^\star(\mubold), \mubold; \thetabold,\tau)},
\end{equation}
where $\Ccal$ is the set of bound constraints defined by
\begin{equation}\label{eqn:bound_con}
\Ccal \coloneqq \curlyb{\mubold \in \Dcal \suchthat \mubold_l \leq \mubold \leq \mubold_u}.
\end{equation}

We use the adjoint method to compute the gradient of the objective function because the
parameter-space dimension $N_\mubold$ can be large. The corresponding HDM adjoint problem
is: given $\mubold\in\Dcal$ and the corresponding primal solution
$\ubm^\star\in\Rbb^{N_\ubm}$ satisfying (\ref{eqn:hdm_res}), find the adjoint
solution $\lambdabold^\star\in\Rbb^{N_\ubm}$ satisfying
\begin{equation} \label{eqn:hdm_adj_eqn}
 \rlam(\lambdabold^\star,\ubm^\star,\mubold;\thetabold,\tau) = \zerobold,
\end{equation}
where the adjoint residual, $\rlam : \Rbb^{N_\ubm}\times\Rbb^{N_\ubm}\times \Dcal \times \Rbb^{N_\cbm} \times \Rbb \rightarrow \Rbb^{N_\ubm}$, is defined as
\begin{equation} \label{eqn:hdm_adj_res}
 \rlam : (\zbm, \ubm, \mubold; \thetabold, \tau) \mapsto
 \pder{\rbm}{\ubm}(\ubm,\mubold)^T\zbm - \pder{\ell}{\ubm}(\ubm,\mubold;\thetabold,\tau)^T.
\end{equation}
We expand $\pder{\ell}{\ubm}(\ubm,\mubold;\thetabold,\tau)$ and define the Lagrangian part, $\rbm^{\mathtt{L},\lambda}: \Rbb^{N_\ubm}\times\Rbb^{N_\ubm}\times \Dcal \times \Rbb^{N_\cbm} \rightarrow \Rbb^{N_\ubm}$, as
\begin{equation}\label{eqn:hdm_al_lag}
    \rbm^{\mathtt{L},\lambda} : (\zbm, \ubm, \mubold;\thetabold) \mapsto \pder{\rbm}{\ubm}(\ubm,\mubold)^T\zbm - \pder{j}{\ubm}(\ubm, \mubold)^T + \pder{\cbm}{\ubm}(\ubm,\mubold)^T\thetabold,
\end{equation}
which reduces (\ref{eqn:hdm_adj_res}) to
\begin{equation}\label{eqn:hdm_adj_res_regroup}
    \rlam(\zbm, \ubm, \mubold; \thetabold, \tau) = \rbm^{\mathtt{L},\lambda}(\zbm, \ubm, \mubold;\thetabold) - \tau\pder{\cbm}{\ubm}(\ubm, \mubold)^T\cbm(\ubm, \mubold).
\end{equation}
Assuming the residual Jacobian is well-defined and invertible for every $(\ubm^\star(\mubold),\mubold)$ pair with $\mubold\in\Dcal$,
then there exists a unique adjoint solution
\begin{equation}
 \lambdabold^\star(\mubold;\thetabold, \tau) = \pder{\rbm}{\ubm}(\ubm^\star(\mubold), \mubold)^{-T}\pder{\ell}{\ubm}(\ubm^\star(\mubold),\mubold;\thetabold,\tau)^T.
\end{equation}
making the implicit map $(\mubold;\thetabold,\tau)\mapsto\lambdabold^\star(\mubold;\thetabold,\tau)$ well-defined for all $\mubold\in\Dcal$.
From the primal-adjoint pair $(\ubm^\star,\lambdabold^\star)$ satisfying (\ref{eqn:hdm_res}) and (\ref{eqn:hdm_adj_eqn}),
the gradient of the reduced AL function, $\func{\nabla f}{\Dcal \times \Rbb^{N_\cbm} \times \Rbb}{\Rbb^{N_\mubold}}$, is computed as
\begin{equation}\label{eqn:hdm_grad}
 \nabla f : (\mubold;\thetabold,\tau) \mapsto \glam(\lambdabold^\star(\mubold;\thetabold,\tau), \ubm^\star(\mubold), \mubold;\thetabold,\tau)^T = \sens{\ell}(\ubm^\star(\mubold), \mubold; \thetabold, \tau)^T -  \sens{\rbm}(\ubm^\star(\mubold), \mubold)^T\lambdabold^\star(\mubold;\thetabold,\tau),
\end{equation}
where the operator that reconstructs the gradient from the adjoint solution,
$\glam : \Rbb^{N_\ubm}\times\Rbb^{N_\ubm}\times \Dcal \times \Rbb^{N_\cbm} \times \Rbb \rightarrow \Rbb^{1\times N_\mubold}$, is
\begin{equation}
 \glam : (\zbm, \ubm, \mubold;\thetabold,\tau) \mapsto \sens{\ell}(\ubm, \mubold; \thetabold, \tau) - \zbm^T \sens{\rbm}(\ubm, \mubold).
\end{equation}
Similarly to (\ref{eqn:hdm_al_lag}) and (\ref{eqn:hdm_adj_res_regroup}), we define $\gbm^{\mathtt{L},\lambda} : \Rbb^{N_\ubm}\times\Rbb^{N_\ubm}\times \Dcal \times \Rbb^{N_\cbm}\rightarrow \Rbb^{1\times N_\mubold}$ as:
\begin{equation}\label{eqn:hdm_grad_lag}
\gbm^{\mathtt{L},\lambda} : (\zbm, \ubm, \mubold;\thetabold) \mapsto \pder{j}{\mubold}(\ubm,\mubold) - \thetabold^T\pder{\cbm}{\mubold}(\ubm,\mubold) - \zbm^T\pder{\rbm}{\mubold}(\ubm,\mubold),
\end{equation}
and the gradient becomes
\begin{equation}\label{eqn:hdm_grad_regroup}
    \glam(\zbm, \ubm, \mubold;\thetabold,\tau) = \gbm^{\mathtt{L},\lambda}(\zbm,\ubm,\mubold;\thetabold) + 
    \tau\cbm(\ubm,\mubold)^T\pder{\cbm}{\mubold}(\ubm,\mubold).
\end{equation}

Next, we use the disassembly operator $\Pbm$ to write the primal residual (\ref{eqn:hdm_elem_res}), AL function (\ref{eqn:auglag}), adjoint residual (\ref{eqn:hdm_adj_res_regroup}) and AL gradient (\ref{eqn:hdm_grad}) in unassembled form by considering the Lagrangian and penalty contributions separately. The unassembled form will be used to construct optimization-informed hyperreduced models in the next section.
The AL function can be expanded into elemental contributions as
\begin{equation}\label{eqn:hdm_elem_qoi}
\ell(\ubm,\mubold;\thetabold,\tau) 
= \sum^{N_\mathtt{e}}_{e=1} \ell_{e}^{\mathtt{L}}(\ubm_e,\mubold;\thetabold)+\frac{\tau}{2}\norm{\sum^{N_\mathtt{e}}_{e=1} \cbm_e(\ubm_e,\mubold)}^2_2,
\end{equation}
where the elemental contribution to the Lagrangian function, $\ell_{e}^{\mathtt{L}} : \Rbb^{N_\ubm^\mathtt{e}}\times\Dcal\times\Rbb^{N_\cbm} \rightarrow \Rbb$, is defined as
\begin{equation}
\ell_{e}^{\mathtt{L}} : (\ubm_e,\mubold;\thetabold) \mapsto j_e(\ubm_e,\mubold)-\thetabold^T\cbm_e(\ubm_e,\mubold).
\end{equation}
Similarly, the adjoint residual can be written as
\begin{equation}\label{eqn:hdm_elem_adj_res}
\rlam(\zbm, \ubm, \mubold;\thetabold,\tau)
= \sum^{N_\mathtt{e}}_{e=1} \rbm_{e}^{\mathtt{L},\lambda}(\zbm_e,\ubm_e,\mubold;\thetabold)
    -\tau\bracket{\sum^{N_\mathtt{e}}_{e=1}\Pbm_e \pder{\cbm_e}{\ubm_e}(\ubm_e,\mubold)^T}\bracket{\sum^{N_\mathtt{e}}_{e=1}\cbm_e(\ubm_e,\mubold)},
\end{equation}
where the elemental contribution to the Lagrangian adjoint residual,
$\rbm_e^{\mathtt{L},\lambda} : \Rbb^{N_\ubm^\mathtt{e}}\times\Rbb^{N_\ubm^\mathtt{e}}\times\Dcal\times\Rbb^{N_\cbm}\rightarrow \Rbb^{N_\ubm^\mathtt{e}}$, is defined as
\begin{equation}
\rbm_{e}^{\mathtt{L},\lambda} : (\zbm_e, \ubm_e, \mubold; \thetabold) \mapsto
    -\Pbm_e\pder{j_e}{\ubm_e}(\ubm_e,\mubold)^T
    +\Pbm_e\pder{\cbm_e}{\ubm_e}(\ubm_e,\mubold)^T\thetabold
    +\Pbm_e\pder{\rbm_e}{\ubm_e}(\ubm_e, \ubm_e', \mubold)^T \zbm_e + \Pbm_e'\pder{\rbm_e}{\ubm_e'}(\ubm_e, \ubm_e',\mubold)^T \zbm_e
\end{equation}
and $\zbm_e = \Pbm_e^T\zbm$. Finally, we can rewrite the gradient as
\begin{equation}\label{eqn:hdm_elem_grad}
\glam(\zbm,\ubm,\mubold;\thetabold,\tau) 
= \sum^{N_\mathtt{e}}_{e=1} \gbm_{e}^{\mathtt{L},\lambda}(\zbm_e,\ubm_e,\mubold;\thetabold)+\tau\bracket{\sum^{N_\mathtt{e}}_{e=1}\cbm_e(\ubm_e,\mubold)}^T\bracket{\sum_{e=1}^{N_\mathtt{e}}\sens{\cbm_e}(\ubm_e,\mubold)},
\end{equation}
where the elemental contribution to the gradient,
$\gbm_{e}^{\mathtt{L},\lambda} : \Rbb^{N_\ubm^\mathtt{e}}\times\Rbb^{N_\ubm^\mathtt{e}}\times\Dcal\times\Rbb^{N_\cbm}\rightarrow \Rbb^{1\times N_\mubold}$, is defined as
\begin{equation}
\gbm_{e}^{\mathtt{L},\lambda}(\zbm_e,\ubm_e,\mubold;\thetabold) \mapsto
\sens{j_e}(\ubm_e,\mubold)-\thetabold^T\sens{\cbm_e}(\ubm_e,\mubold)-\zbm_e^T\sens{\rbm_e}(\ubm_e, \ubm_e', \mubold)
\end{equation}

\begin{remark} \label{rem:auglag}
The quadratic penalty in the augmented Lagrangian (\ref{eqn:auglag}) introduces a nonlinear dependence on the elemental quantities that precludes its direct elemental expansion of the
form $\ell(\ubm,\mubold;\thetabold,\tau) = \sum_{e=1}^{N_\mathtt{e}} \ell_e(\ubm,\mubold;\thetabold,\tau)$. This requires a departure from previous work in this area \cite{wen_globally_2023} and necessitates the term-by-term elemental expansion described in Section~\ref{sec:pdeopt:auglag}.
\end{remark}

\section{Hyperreduction}
\label{sec:hyperreduction}
In this section, we enhance the empirical quadrature procedure (EQP) developed in \cite{yano_discontinuous_2019,wen_globally_2023} with additional constraints required in the optimization setting that will be used to prove global convergence when embedded in a trust-region framework. To this end, we introduce standard projection-based model reduction (Section \ref{sec:hyperreduction:rom}), the adapted EQP method (Section~\ref{sec:hyperreduction:lp_eqp}), and corresponding error estimates (Section~\ref{sec:hyperreduction:eqp_err_est}).

\subsection{Projection-based model reduction}
\label{sec:hyperreduction:rom}
Projection-based model reduction begins with the ansatz that the primal state lies in a low-dimensional subspace
$\Vboldcal_\Phibold = \{\Phibold \hat\ybm \mid \hat\ybm\in\Rbb^n\} \subset \Rbb^{N_\ubm}$, where $\Phibold \in \Rbb^{N_\ubm \times n}$ with $n \ll N_\ubm$ is the reduced basis. That is, 
we approximate the primal state $\ubm^\star$ as
\begin{equation} \label{eqn:rom_ansatz}
    \ubm^\star \approx \hat\ubm_\Phibold^\star \coloneqq \Phibold \hat\ybm^\star,
\end{equation}
where $\hat\ubm_\Phibold^\star\in\Vboldcal_\Phibold$ is the subspace approximation of the primal
state $\ubm_\star$ and $\hat\ybm_\Phibold^\star \in \Rbb^n$ contains the corresponding reduced coordinates.
The primal reduced coordinates are implicitly defined as the solution of the Galerkin reduced-order model:
given $\mubold\in\Dcal$, find $\hat\ybm_\Phibold^\star\in\Rbb^n$ such that
\begin{equation}\label{eqn:rom_res_eqn}
    \hrbm_\Phibold(\hat\ybm_\Phibold^\star, \mubold) = \zerobold,
\end{equation}
where $\hrbm_\Phibold : \Rbb^{n}\times\Dcal \rightarrow \Rbb^n$ with
\begin{equation}\label{eqn:rom_res}
    \hrbm_\Phibold : (\hat\ybm,\mubold) \mapsto \Phibold^T \rbm(\Phibold\hat\ybm, \mubold),
\end{equation}
which is obtained by substituting the ROM ansatz (\ref{eqn:rom_ansatz}) into the governing equation (\ref{eqn:hdm_res})
and requiring the resulting residual to be orthogonal to the reduced subspace (Galerkin projection). We assume that for
every $\mubold\in\Dcal$, there exists a unique primal solution $\hat\ybm_\Phibold^\star=\hat\ybm_\Phibold^\star(\mubold)$
satisfying (\ref{eqn:rom_res_eqn}) and that the implicit map $\mubold \mapsto \hat\ybm_\Phibold^\star(\mubold)$ is continuously
differentiable. The reduced residual ($\hrbm_\Phibold$) inherits an unassembled structure from the HDM
\begin{equation} \label{eqn:rom_res_elem}
 \hrbm_\Phibold(\hat\ybm, \mubold) =
 \sum_{e=1}^{N_\mathtt{e}} \Phibold_e^T \rbm_\Phibold(\Phibold_e\hat\ybm, \Phibold_e'\hat\ybm, \mubold).
\end{equation}
where $\Phibold_e \coloneqq \Pbm_e^T\Phibold \in \Rbb^{N_\ubm^\mathtt{e} \times n}$ and
$\Phibold_e' \coloneqq (\Pbm_e')^T\Phibold \in \Rbb^{N_\ubm^{\mathtt{e}'} \times n}$ are the
reduced bases restricted to the elemental degrees of freedom.

Each optimization functional is reduced by substituting the ROM approximation into the original functional, including the augmented Lagrangian function. 
That is, let
\begin{equation} \label{eqn:rom_f}
\hat{\ell}_\Phibold : (\hat\ybm,\mubold;\thetabold,\tau) \mapsto \ell(\Phibold\hat\ybm,\mubold;\thetabold, \tau), \quad
\hat{\ell}^{L}_\Phibold : (\hat\ybm,\mubold;\thetabold) \mapsto \ell^{L}(\Phibold\hat\ybm,\mubold;\thetabold), \quad
\hat{f}_\Phibold : (\mubold;\thetabold,\tau) \mapsto \hat{\ell}_\Phibold(\hat\ybm_\Phibold^\star(\mubold),\mubold;\thetabold,\tau),
\end{equation}
where $\hat{\ell}_\Phibold : \Rbb^n\times\Dcal \times \Rbb^{N_\cbm} \times \Rbb \rightarrow \Rbb$ is the reduced augmented Lagrangian function, $\hat{\ell}^\mathtt{L}_\Phibold : \Rbb^n\times\Dcal \times \Rbb^{N_\cbm} \rightarrow \Rbb$ is the Lagrangian component of $\hat{\ell}_\Phibold$, and $\func{\hat f_\Phibold}{\Dcal \times \Rbb^{N_\cbm} \times \Rbb}{\Rbb}$ is the restriction of the reduced augmented Lagrangian function to the ROM solution manifold.
The reduced AL function can be expanded as
\begin{equation}
  \hat{\ell}_\Phibold(\hat\ybm,\mubold;\thetabold,\tau) = \ell^{\mathtt{L}}_{\Phibold}(\hat\ybm,\mubold;\thetabold) + \frac{\tau}{2} \norm{\cbm(\Phibold\hat\ybm, \mubold)}^2_2
\end{equation}
and inherits an unassembled structure from the HDM
\begin{equation} \label{eqn:rom_elem_f}
 \hat{\ell}_\Phibold(\hat\ybm,\mubold;\thetabold,\tau) 
= \sum^{N_\mathtt{e}}_{e=1} \hat{\ell}^\mathtt{L}_{\Phibold,e}(\hat\ybm,\mubold;\thetabold) +\frac{\tau}{2}\norm{\sum^{N_\mathtt{e}}_{e=1} \cbm_e(\Phibold_e\hat\ybm,\mubold)}^2_2,
\end{equation}
where the elemental contribution to the AL function, $\hat{\ell}^\mathtt{L}_{\Phibold,e} : \Rbb^n \times \Dcal \times \Rbb^{N_\cbm} \rightarrow \Rbb$, is defined as
\begin{equation}
\hat{\ell}^\mathtt{L}_{\Phibold,e} : (\hat\ybm,\mubold;\thetabold) \mapsto
 j_e(\Phibold_e\hat\ybm,\mubold)-\thetabold^T \cbm_e(\Phibold_e\hat\ybm,\mubold).
\end{equation}

The reduced adjoint residual, $\hrlam_\Phibold : \Rbb^n\times\Rbb^n\times\Dcal \times \Rbb^{N_\cbm} \times \Rbb \rightarrow \Rbb^n$, is defined as
\begin{equation} \label{eqn:rom_adj_res}
 \hrlam_\Phibold : (\hat\zbm,\hat\ybm,\mubold;\thetabold,\tau) 
 \mapsto \pder{\hat\rbm_\Phibold}{\hat\ybm}(\hat\ybm,\mubold)^T\hat\zbm - \pder{\hat{\ell}_\Phibold}{\hat\ybm}(\hat\ybm,\mubold;\thetabold,\tau)^T
  = \Phibold^T \rlam(\Phibold\hat\zbm,\Phibold\hat\ybm,\mubold;\thetabold,\tau),
\end{equation}
where the equality follows directly from the definitions in (\ref{eqn:hdm_adj_res}), (\ref{eqn:rom_res}), and (\ref{eqn:rom_f}).
Following (\ref{eqn:hdm_al_lag}) and (\ref{eqn:hdm_adj_res_regroup}), we let the Lagrangian component $\rbm^{\mathtt{L},\lambda}_\Phibold: \Rbb^n \times\Rbb^n \times \Dcal \times \Rbb^{N_\cbm} \rightarrow \Rbb^n$ be
\begin{equation}
  \hat\rbm^{\mathtt{L},\lambda}_\Phibold : (\hat\zbm,\hat\ybm,\mubold;\thetabold)
  \mapsto \pder{\hat\rbm_\Phibold}{\hat\ybm}(\hat\ybm,\mubold)^T\hat\zbm - \Phibold^T\pder{j}{\ubm}(\Phibold\hat\ybm, \mubold)^T + \bracket{\pder{\cbm}{\ubm}(\Phibold\ybm,\mubold)\Phibold}^T\thetabold = \Phibold^T\rbm^{\mathtt{L},\lambda}(\Phibold\hat\zbm,\Phibold\hat\ybm,\mubold; \thetabold),
\end{equation}
and (\ref{eqn:rom_adj_res}) becomes
\begin{equation}
\hrlam_\Phibold(\hat\zbm,\hat\ybm,\mubold;\thetabold,\tau) 
= \hat\rbm^{\mathtt{L},\lambda}_\Phibold(\hat\zbm,\hat\ybm,\mubold;\thetabold) - \tau \Phibold^T\pder{\cbm}{\ubm}(\Phibold\hat\ybm, \mubold)^T\cbm(\Phibold\hat\ybm, \mubold).
\end{equation}
The reduced adjoint problem is: given $\mubold\in\Dcal$ and the corresponding reduced primal solution
$\hat\ybm_\Phibold^\star$ satisfying (\ref{eqn:rom_res_eqn}), find the reduced adjoint solution $\hat\lambdabold_\Phibold^\star\in\Rbb^n$
satisfying
\begin{equation}
 \hrlam_\Phibold(\hat\lambdabold_\Phibold^\star, \hat\ybm_\Phibold^\star, \mubold;\thetabold,\tau) = \zerobold.
\end{equation}
Assuming the reduced residual Jacobian is well-defined and invertible for every $(\hat\ybm_\Phibold^\star(\mubold),\mubold)$
pair with $\mubold\in\Dcal$, there exists a unique reduced adjoint solution
\begin{equation}
 \hat\lambdabold_\Phibold^\star(\mubold;\thetabold,\tau) = \pder{\hat\rbm_\Phibold}{\hat\ybm}(\hat\ybm_\Phibold^\star(\mubold),\mubold)^{-T}\pder{\hat{\ell}_\Phibold}{\hat\ybm}(\hat\ybm_\Phibold^\star(\mubold),\mubold;\thetabold,\tau)^T,
\end{equation}
making the implicit map $(\mubold;\thetabold,\tau)\mapsto \hat\lambdabold_\Phibold^\star(\mubold;\thetabold,\tau)$ well-defined for all $\mubold\in\Dcal$.
The reduced adjoint residual inherits an unassembled structure from the HDM using
the relationships in (\ref{eqn:hdm_elem_adj_res}) and (\ref{eqn:rom_adj_res})
\begin{equation} \label{eqn:rom_elem_adj_res}
\hrlam_\Phibold(\hat\zbm,\hat\ybm,\mubold;\thetabold,\tau) 
= \sum^{N_\mathtt{e}}_{e=1} \hat\rbm_{\Phibold,e}^{\mathtt{L},\lambda}(\hat\zbm,\hat\ybm,\mubold;\thetabold) -\tau\bracket{\sum^{N_\mathtt{e}}_{e=1}\Phibold_e^T \pder{\cbm_e}{\ubm_e}(\Phibold_e\hat\ybm,\mubold)^T}\bracket{\sum^{N_\mathtt{e}}_{e=1}\cbm_e(\Phibold_e\hat\ybm,\mubold)}.
\end{equation}
where the elemental contribution, $\hat\rbm_{\Phibold,e}^{\mathtt{L},\lambda} : \Rbb^n \times \Rbb^n \times \Dcal \times \Rbb^{N_\cbm} \rightarrow \Rbb^n$, is defined as
\begin{equation}
\begin{aligned}
\hat\rbm_{\Phibold,e}^{\mathtt{L},\lambda} : (\hat\zbm,\hat\ybm,\mubold;\thetabold) 
&\mapsto -\Phibold_e^T\pder{j_e}{\ubm_e}(\Phibold_e\hat\ybm,\mubold)^T
   +\Phibold_e^T\pder{\cbm_e}{\ubm_e}(\Phibold_e\hat\ybm,\mubold)^T\thetabold
   +\Phibold_e^T\pder{\rbm_e}{\ubm_e}(\Phibold_e\hat\ybm, \Phibold_e'\hat\ybm, \mubold)^T \Phibold_e \hat\zbm +\\
&\quad \paren{\Phibold_e'}^T\pder{\rbm_e}{\ubm_e'}(\Phibold_e\hat\ybm, \Phibold_e'\hat\ybm, \mubold)^T \Phibold_e\hat\zbm.
\end{aligned}
\end{equation}
From the primal-adjoint pair ($\hat\ybm_\Phibold^\star,\hat\lambdabold_\Phibold^\star$), the gradient of the reduced AL ($\hat{f}_\Phibold$),
$\nabla \hat{f}_\Phibold : \Dcal \times \Rbb^{N_\cbm} \times \Rbb \rightarrow {\Rbb^{N_\mubold}}$, is computed as
\begin{equation}
 \nabla \hat{f}_\Phibold : (\mubold;\thetabold,\tau) \mapsto \hat\gbm_\Phibold^\lambda(\hat\lambdabold_\Phibold^\star(\mubold;\thetabold,\tau), \hat\ybm_\Phibold^\star(\mubold), \mubold;\thetabold,\tau)^T =
 \pder{\hat{\ell}_\Phibold}{\mubold}(\hat\ybm_\Phibold^\star(\mubold),\mubold;\thetabold,\tau)^T - \pder{\hat\rbm_\Phibold}{\mubold}(\hat\ybm_\Phibold^\star(\mubold),\mubold)^T\hat\lambdabold_\Phibold^\star(\mubold;\thetabold,\tau),
\end{equation}
where the operator that reconstructs the reduced AL gradient from the reduced adjoint solution, $\hat\gbm_\Phibold^\lambda : \Rbb^n\times\Rbb^n\times\Dcal \times \Rbb^{N_\cbm} \times \Rbb \rightarrow \Rbb^{1\times N_\mubold}$, is
\begin{equation}
 \hat\gbm_\Phibold^\lambda : (\hat\zbm, \hat\ybm, \mubold;\thetabold,\tau) \mapsto
 \pder{\hat{\ell}_\Phibold}{\mubold}(\hat\ybm,\mubold;\thetabold,\tau) - \hat\zbm^T\pder{\hat\rbm_\Phibold}{\mubold}(\hat\ybm,\mubold) =
 \gbm^\lambda(\Phibold\hat\zbm, \Phibold\hat\ybm,\mubold;\thetabold,\tau).
\end{equation}
The Lagrangian part of the gradient reconstruction operator, $\hat\gbm^{\mathtt{L},\lambda} : \Rbb^n\times\Rbb^n\times \Dcal \times \Rbb^{N_\cbm} \rightarrow \Rbb^{1\times N_\mubold}$, is
\begin{equation}\label{eqn:rom_grad_lag}
\hat\gbm_\Phibold^{\mathtt{L},\lambda} : (\hat\zbm, \hat\ybm, \mubold;\thetabold) \mapsto \pder{j}{\mubold}(\Phibold\ybm,\mubold) - \thetabold^T\pder{\cbm}{\mubold}(\Phibold\ybm,\mubold) - \hat\zbm^T\pder{\hat\rbm_\Phibold}{\mubold}(\ybm,\mubold),
\end{equation}
which allows the gradient operator to be rewritten as
\begin{equation}\label{eqn:rom_grad_regroup}
    \hat\gbm_\Phibold^\lambda(\zbm,\ybm, \mubold;\thetabold,\tau) = \hat\gbm_\Phibold^{\mathtt{L},\lambda}(\hat\zbm, \ybm, \mubold;\thetabold) + \tau\cbm(\Phibold\ybm,\mubold)^T\pder{\cbm}{\mubold}(\Phibold\ybm,\mubold).
\end{equation}
The reduced gradient operator also inherits an unassembled structure from the HDM in (\ref{eqn:hdm_elem_grad})
\begin{equation} \label{eqn:rom_elem_grad}
\begin{aligned}
\hat\gbm_\Phibold^\lambda(\hat\zbm, \hat\ybm, \mubold;\thetabold,\tau) 
&= \sum^{N_\mathtt{e}}_{e=1} \hat\gbm_{\Phibold,e}^{\mathtt{L},\lambda}(\hat\zbm, \hat\ybm, \mubold;\thetabold)+\tau\bracket{\sum^{N_\mathtt{e}}_{e=1}\cbm_e(\Phibold_e\hat\ybm,\mubold)}^T\bracket{\sum^{N_\mathtt{e}}_{e=1}\sens{\cbm_e}(\Phibold_e\hat\ybm,\mubold)},
\end{aligned}
\end{equation}
where the elemental contribution, $\hat\gbm_{\Phibold,e}^{\mathtt{L},\lambda} : \Rbb^n \times \Rbb^n \times \Dcal \times \Rbb^{N_\cbm} \mapsto \Rbb^{1\times N_\mubold}$, is defined as
\begin{equation}
\hat\gbm_{\Phibold,e}^{\mathtt{L},\lambda}(\hat\zbm, \hat\ybm, \mubold;\thetabold) 
= \sens{j_e}(\Phibold_e\hat\ybm,\mubold)-\thetabold^T\sens{\cbm_e}(\Phibold_e\hat\ybm,\mubold)-\hat\zbm^T\Phibold_e^T\sens{\rbm_e}(\Phibold_e\hat\ybm, \Phibold_e\hat\ybm', \mubold).
\end{equation}

Finally, we consider the reduced sensitivity residual as in \cite{wen_globally_2023}, $\hat\rbm^\partial_\Phibold : \Rbb^{n\times N_\mubold}\times\Rbb^n\times\Dcal \rightarrow \Rbb^{n\times N_\mubold}$, which will be used to accelerate convergence of the optimization framework. The sensitivity residual is defined as
\begin{equation} \label{eqn:rom_sens}
 \hat\rbm^\partial_\Phibold : (\hat\wbm,\hat\ybm,\mubold) \mapsto \pder{\hat\rbm_\Phibold}{\hat\ybm}(\hat\ybm,\mubold)\hat\wbm + \pder{\hat\rbm_\Phibold}{\mubold}(\hat\ybm,\mubold),
\end{equation}
and its unassembled form
\begin{equation} \label{eqn:rom_sens_elem}
  \hat\rbm^\partial_\Phibold(\hat\wbm,\hat\ybm,\mubold) =
   \sum^{N_\mathtt{e}}_{e=1} \bracket{
       \Phibold_e^T\pder{\rbm_e}{\ubm_e}(\Phibold_e\hat\ybm, \Phibold_e'\hat\ybm, \mubold) \Phibold_e\hat\wbm +
       \Phibold_e^T\pder{\rbm_e}{\ubm_e'}(\Phibold_e\hat\ybm, \Phibold_e'\hat\ybm, \mubold) \Phibold_e'\hat\wbm +
       \Phibold_e^T\pder{\rbm_e}{\mubold}(\Phibold_e\hat\ybm, \Phibold_e'\hat\ybm, \mubold)
     }.
\end{equation}
The reduced sensitivity problem is: given $\mubold \in \Dcal$ and the corresponding reduced primal solution $\hat\ybm_\Phibold^\star$, find the reduced sensitivity solution $\partial_\mubold\hat\ybm_\Phibold^\star \in \Rbb^{n \times N_\mubold}$ such that
\begin{equation}
  \hat\rbm^\partial_\Phibold(\partial_\mubold\hat\ybm_\Phibold^\star,\hat\ybm_\Phibold^\star,\mubold) = \zerobold.
\end{equation}

Despite the potentially significant reduction in degrees of freedom between the HDM system (\ref{eqn:hdm_res}) and the
reduced system (\ref{eqn:rom_res_eqn}), computational efficiency will not necessarily be achieved due to the cost of
constructing the nonlinear terms. This can be clearly seen from the unassembled form of the primal residual
(\ref{eqn:rom_res_elem}), objective function (\ref{eqn:rom_elem_f}), adjoint residual (\ref{eqn:rom_elem_adj_res}), and AL gradient (\ref{eqn:rom_elem_grad}) as each of these operations requires elemental operations \textit{for all elements}. For systems comprised of many elements, e.g., high-fidelity discretization of PDEs, which can be a serious bottleneck. To accelerate the formation of the nonlinear terms, we turn to EQP-based hyperreduction.

\subsection{An optimization-aware empirical quadrature procedure}
\label{sec:hyperreduction:lp_eqp}
To accelerate the assembly of the nonlinear terms in the various reduced quantities introduced in
Section~\ref{sec:hyperreduction:rom}, we use the empirical quadrature procedure \cite{wen_globally_2023, yano_lp_2019, yano_discontinuous_2019}. In the adjoint-based
optimization setting, we use EQP to accelerate any operation that involves assembly over all elements (Section~\ref{sec:hyperreduction:eqp_form}),
i.e., evaluation of the primal and adjoint residuals, the optimization functionals, and gradient reconstruction.
To ensure all hyperreduced quantities are accurate with respect to their reduced counterparts, we include additional
constraints on the original EQP linear program introduced in \cite{yano_discontinuous_2019} (Section~\ref{sec:hyperreduction:eqp_training}).

\subsubsection{Formulation}
\label{sec:hyperreduction:eqp_form}
The EQP construction replaces the unassembled form of the reduced primal residual with a weighted (hyperreduced) version,
$\tilde\rbm_\Phibold : \Rbb^n \times \Dcal \times \Rcal \rightarrow \Rbb^n$, where
\begin{equation}\label{eqn:eqp_elem_res}
 \tilde\rbm_\Phibold : (\tilde\ybm, \mubold; \rhobold) \mapsto \sum_{e=1}^{N_\mathtt{e}} \rho_e\Phibold_e^T \rbm_e(\Phibold_e\tilde\ybm, \Phibold_e'\tilde\ybm,\mubold)
\end{equation}
and $\rhobold\in\Rcal$ is the vector of weights with $\Rcal\subset\Rbb^{N_\mathtt{e}}$ the set of admissible weights
such that $\onebold\in\Rcal$ ($\onebold$ is the vector with each entry equal to one). For each element $\Omega_e\in\Ecal_h$ with $\rho_e=0$, the operations on the element can be completely skipped so computational efficiency is achieved when the vector of weights is highly sparse; construction of $\rhobold$ is deferred to Section~\ref{sec:hyperreduction:eqp_training}. The primal EQP problem reads: given $\mubold\in\Dcal$ and $\rhobold\in\Rcal$, find $\tilde\ybm_\Phibold^\star\in\Rbb^n$ such that
\begin{equation}\label{eqn:eqp:res_eqn}
 \tilde\rbm_\Phibold(\tilde\ybm_\Phibold^\star,\mubold;\rhobold) = \zerobold.
\end{equation}
For each $\rhobold\in\Rcal$, we assume there is a unique primal solution
$\tilde\ybm_\Phibold^\star = \tilde\ybm_\Phibold^\star(\mubold;\rhobold)$ satisfying (\ref{eqn:eqp:res_eqn}) for every $\mubold\in\Dcal$.

In the optimization setting, computational efficiency is also required for the evaluation of the optimization functionals, the adjoint residual
(similar to \cite{wen_globally_2023,yano2020goal,du_adaptive_2021}), and the gradient reconstruction. We use the same approach of introducing weights
($\rhobold$) into the unassembled form so efficiency is achieved when $\rhobold$ is sparse.
The hyperreduced AL, $\tilde{\ell}_\Phibold : \Rbb^n \times \Dcal \times \Rcal \times \Rbb^{N_\cbm} \times \Rbb \rightarrow \Rbb$, its Lagrangian part $\tilde{\ell}^\mathtt{L}_\Phibold : \Rbb^n \times \Dcal \times \Rcal \times \Rbb^{N_\cbm} \rightarrow \Rbb$, and the restriction of the hyperreduced AL function to the EQP solution manifold, $\tilde{f}_\Phibold : \Dcal\times \Rcal \times \Rbb^{N_\cbm} \times \Rbb \rightarrow \Rbb$,
are defined as
\begin{equation}\label{eqn:eqp_elem_f}
\begin{aligned}
\tilde{\ell}_\Phibold : (\tilde\ybm, \mubold; \rhobold,\thetabold,\tau) &\mapsto
\tilde{\ell}^\mathtt{L}_\Phibold(\tilde\ybm, \mubold; \rhobold,\thetabold) + \frac{\tau}{2}\norm{\sum^{N_\mathtt{e}}_{e=1}\rho_e \cbm_e(\Phibold_e\tilde\ybm,\mubold)}^2_2, \\
\tilde{\ell}^\mathtt{L}_\Phibold:(\tilde\ybm, \mubold; \rhobold,\thetabold)
&\mapsto \sum_{e=1}^{N_\mathtt{e}} \rho_e j_e(\Phibold_e\tilde\ybm,\mubold) - \thetabold^T\sum^{N_\mathtt{e}}_{e=1} \rho_e \cbm_e(\Phibold_e\tilde\ybm,\mubold), \\
\tilde{f}_\Phibold : (\mubold; \rhobold, \thetabold,\tau)
&\mapsto \tilde{\ell}_\Phibold(\tilde\ybm_\Phibold^\star(\mubold;\rhobold),\mubold;\rhobold,\thetabold,\tau).
\end{aligned}
\end{equation}

The hyperreduced adjoint residual, $\trlam_\Phibold : \Rbb^n\times\Rbb^n\times\Dcal\times\Rcal \times \Rbb^{N_\cbm} \times \Rbb \rightarrow \Rbb^n$, is defined as
\begin{equation} \label{eqn:eqp:res_adj}
 \trlam_\Phibold : (\tilde\zbm,\tilde\ybm,\mubold;\rhobold,\thetabold,\tau) \mapsto \pder{\tilde\rbm_\Phibold}{\tilde\ybm}(\tilde\ybm,\mubold;\rhobold)^T\tilde\zbm - \pder{\tilde{\ell}_\Phibold}{\tilde\ybm}(\tilde\ybm,\mubold;\rhobold,\thetabold,\tau)^T,
\end{equation}
or, in unassembled form,
\begin{equation} \label{eqn:eqp_elem_rlam}
\trlam_\Phibold(\tilde\zbm,\tilde\ybm,\mubold;\rhobold,\thetabold,\tau) 
= \tilde\rbm_\Phibold^{\mathtt{L},\lambda}(\tilde\zbm,\tilde\ybm,\mubold;\rhobold,\thetabold)-\tau\bracket{\sum^{N_\mathtt{e}}_{e=1}\rho_e\Phibold_e^T \pder{\cbm_e}{\ubm_e}(\Phibold_e\tilde\ybm,\mubold)^T}\bracket{\sum^{N_\mathtt{e}}_{e=1}\rho_e\cbm_e(\Phibold_e\tilde\ybm,\mubold)}.
\end{equation}
where its Lagrangian part $\tilde\rbm_\Phibold^{\mathtt{L},\lambda}: \Rbb^n\times\Rbb^n\times\Dcal\times\Rcal \times \Rbb^{N_\cbm} \rightarrow \Rbb^n$ is defined as
\begin{equation}
\begin{aligned}
\tilde\rbm_\Phibold^{\mathtt{L},\lambda}:(\tilde\zbm,\tilde\ybm,\mubold;\rhobold,\thetabold)
& \mapsto 
\sum^{N_\mathtt{e}}_{e=1} \rho_e\left[
-\Phibold_e^T\pder{j_e}{\ubm_e}(\Phibold_e\tilde\ybm,\mubold)^T
+\Phibold_e^T\pder{\cbm_e}{\ubm_e}(\Phibold_e\tilde\ybm,\mubold)^T\thetabold
+\Phibold_e^T\pder{\rbm_e}{\ubm_e}(\Phibold_e\tilde\ybm, \Phibold_e'\tilde\ybm, \mubold)^T \Phibold_e \tilde\zbm \right. + \\
&\quad\left. \paren{\Phibold_e'}^T\pder{\rbm_e}{\ubm_e'}(\Phibold_e\tilde\ybm, \Phibold_e'\tilde\ybm, \mubold)^T \Phibold_e\tilde\zbm \right].
\end{aligned}
\end{equation}
The hyperreduced adjoint problem is: given $\mubold\in\Dcal$ and the corresponding hyperreduced primal solution
$\tilde\ybm_\Phibold^\star$ satisfying (\ref{eqn:eqp:res_eqn}), find the hyperreduced adjoint solution $\tilde\lambdabold_\Phibold^\star\in\Rbb^n$ satisfying
\begin{equation}
 \trlam_\Phibold(\tilde\lambdabold_\Phibold^\star, \tilde\ybm_\Phibold^\star, \mubold; \rhobold,\thetabold,\tau) = \zerobold.
\end{equation}
For each $\rhobold\in\Rcal$, we assume the hyperreduced Jacobian is well-defined and invertible for every $(\tilde\ybm_\Phibold^\star(\mubold;\rhobold),\mubold)$ pair with $\mubold\in\Dcal$ so there exists a unique hyperreduced adjoint solution
\begin{equation}
 \tilde\lambdabold_\Phibold^\star(\mubold;\rhobold,\thetabold,\tau) = \pder{\tilde\rbm_\Phibold}{\tilde\ybm}(\tilde\ybm_\Phibold^\star(\mubold;\rhobold),\mubold;\rhobold)^{-T}\pder{\tilde{\ell}_\Phibold}{\tilde\ybm}(\tilde\ybm_\Phibold^\star(\mubold;\rhobold),\mubold;\rhobold,\thetabold,\tau)^T,
\end{equation}
making the implicit map $(\mubold;\rhobold,\thetabold,\tau)\mapsto\tilde\lambdabold_\Phibold^\star(\mubold;\rhobold,\thetabold,\tau)$ well-defined.
From the primal-adjoint pair ($\tilde\ybm_\Phibold^\star,\tilde\lambdabold_\Phibold^\star$), the gradient of the AL function in reduced-space,
$\nabla \tilde{f}_\Phibold : \Dcal\times \Rcal \times \Rbb^{N_\cbm} \times \Rbb \rightarrow \Rbb^{N_\mubold}$, is computed as
\begin{equation}
\begin{aligned}
  \nabla \tilde{f}_\Phibold : (\mubold;\rhobold,\thetabold,\tau)
  &\mapsto \tilde\gbm_\Phibold^\lambda(\tilde\lambdabold_\Phibold^\star(\mubold;\rhobold,\thetabold,\tau), \tilde\ybm_\Phibold^\star(\mubold;\rhobold),\mubold; \rhobold,\thetabold,\tau)^T \\
  &=\pder{\tilde{\ell}_\Phibold}{\mubold}(\tilde\ybm_\Phibold^\star(\mubold;\rhobold),\mubold;\rhobold,\thetabold,\tau)^T - \pder{\tilde\rbm_\Phibold}{\mubold}(\tilde\ybm_\Phibold^\star(\mubold;\rhobold),\mubold;\rhobold)^T\tilde\lambdabold_\Phibold^\star(\mubold;\rhobold,\thetabold,\tau),  
\end{aligned}
\end{equation}
where the operator that reconstructs the hyperreduced AL gradient from the hyperreduced adjoint solution,
$\tilde\gbm_\Phibold^\lambda : \Rbb^n\times\Rbb^n\times\Dcal\times\Rcal \times \Rbb^{N_\cbm} \times \Rbb \rightarrow \Rbb^{1\times N_\mubold}$, is
\begin{equation}
 \tilde\gbm_\Phibold^\lambda : (\tilde\zbm, \tilde\ybm,\mubold;\rhobold,\thetabold,\tau) \mapsto
 \pder{\tilde{\ell}_\Phibold}{\mubold}(\tilde\ybm,\mubold;\rhobold,\thetabold,\tau) - \tilde\zbm^T\pder{\tilde\rbm_\Phibold}{\mubold}(\tilde\ybm,\mubold; \rhobold),
\end{equation}
or, in unassembled form,
\begin{equation}\label{eqn:eqp_elem_glam}
\tilde\gbm_\Phibold^\lambda(\tilde\zbm, \tilde\ybm, \mubold; \rhobold,\thetabold,\tau) 
= \tilde\gbm_\Phibold^{\mathtt{L},\lambda}(\tilde\zbm, \tilde\ybm, \mubold; \rhobold,\thetabold) + \tau\bracket{\sum^{N_\mathtt{e}}_{e=1}\rho_e\cbm_e(\Phibold_e\tilde\ybm,\mubold)}^T\bracket{\sum^{N_\mathtt{e}}_{e=1}\rho_e\sens{\cbm_e}(\Phibold_e\tilde\ybm,\mubold)},
\end{equation}
where $\tilde\gbm_\Phibold^{\mathtt{L},\lambda}: \Rbb^n\times\Rbb^n\times\Dcal\times\Rcal \times \Rbb^{N_\cbm}\rightarrow \Rbb^{1\times N_\mubold}$ is defined as
\begin{equation}
\tilde\gbm_\Phibold^{\mathtt{L},\lambda}:(\tilde\zbm, \tilde\ybm, \mubold; \rhobold,\thetabold)
\mapsto \sum^{N_\mathtt{e}}_{e=1} \rho_e\bracket{
\sens{j_e}(\Phibold_e\tilde\ybm,\mubold)-\thetabold^T\sens{\cbm_e}(\Phibold_e\tilde\ybm,\mubold)-\tilde\zbm^T\Phibold_e^T\sens{\rbm_e}(\Phibold_e\tilde\ybm, \Phibold_e\tilde\ybm', \mubold)}.
\end{equation}

Finally, the hyperreduced sensitivity residual, $\tilde\rbm^\partial_\Phibold : \Rbb^{n\times N_\mubold}\times\Rbb^n\times\Dcal\times\Rcal \rightarrow \Rbb^{n\times N_\mubold}$, is defined as
\begin{equation} \label{eqn:eqp:res_sens}
 \tilde\rbm^\partial_\Phibold : (\tilde\wbm,\tilde\ybm,\mubold;\rhobold) \mapsto \pder{\tilde\rbm_\Phibold}{\tilde\ybm}(\tilde\ybm,\mubold;\rhobold)\tilde\wbm + \pder{\tilde\rbm_\Phibold}{\mubold}(\tilde\ybm,\mubold;\rhobold),
\end{equation}
or, in unassembled form,
\begin{equation} \label{eqn:eqp:res_sens_elem}
  \tilde\rbm^\partial_\Phibold(\tilde\wbm,\tilde\ybm,\mubold;\rhobold) =
  \sum^{N_\mathtt{e}}_{e=1} \rho_e \bracket{
       \Phibold_e^T\pder{\rbm_e}{\ubm_e}(\Phibold_e\tilde\ybm, \Phibold_e'\tilde\ybm, \mubold) \Phibold_e\tilde\wbm +
       \Phibold_e^T\pder{\rbm_e}{\ubm_e'}(\Phibold_e\tilde\ybm, \Phibold_e'\tilde\ybm, \mubold) \Phibold_e'\tilde\wbm +
       \Phibold_e^T\pder{\rbm_e}{\mubold}(\Phibold_e\tilde\ybm, \Phibold_e'\tilde\ybm, \mubold)
     }.
\end{equation}
The hyperreduced sensitivity problem is: given $\mubold \in \Dcal$ and the corresponding reduced primal solution $\tilde\ybm_\Phibold^\star$ find the hyperreduced sensitivity solution $\partial_\mubold\tilde\ybm_\Phibold^\star \in \Rbb^{n \times N_\mubold}$ such that
\begin{equation}
\tilde\rbm^\partial_\Phibold(\partial_\mubold\tilde\ybm_\Phibold^\star,\tilde\ybm_\Phibold^\star,\mubold;\rhobold) = \zerobold
\end{equation}
\begin{remark}\label{rem:rho_one}
It can be observed that the hyperreduced form of all quantities, e.g., (\ref{eqn:eqp_elem_f}), (\ref{eqn:eqp:res_adj}), (\ref{eqn:eqp_elem_glam}), and (\ref{eqn:eqp:res_sens}),
agree with the corresponding reduced quantity in the case where $\rhobold = \onebold$.
\end{remark}
\begin{remark}
Following \cite{wen_globally_2023}, we use a single reduced basis to approximate
the HDM state, sensitivity, and adjoint, as opposed to a separate basis for each.
Additionally, we use a single set of EQP weights for all terms (e.g.,
the primal residual, adjoint residual, sensitivity residual, optimization functionals,
and gradient reconstruction).  This allows us to directly derive gradients at the reduced
and hyperreduced levels, which guarantees consistency with the true gradient of the
(hyper)reduced model, which is helpful for convergence analysis and solving
trust-region subproblems in the next section.
\end{remark}

\subsubsection{Training}
\label{sec:hyperreduction:eqp_training}
The success of EQP is inherently linked to the construction of a sparse weight vector that ensures the hyperreduced
quantities introduced in Section~\ref{sec:hyperreduction:eqp_form} accurately approximate the corresponding reduced quantity in Section~\ref{sec:hyperreduction:rom}.
In \cite{yano_lp_2019,yano_discontinuous_2019}, the EQP weights are chosen to be the solution of an $\ell_1$ minimization problem (to promote sparsity) that
includes several constraints, most important of which is the \textit{manifold accuracy} constraint that requires the reduced ($\hat\rbm$) and hyperreduced ($\tilde\rbm$) primal residuals are sufficiently close on some training set.
Manifold constraints on the quantities of interest and adjoint residual, among others, are included when EQP is used to accelerate dual-weighted residual error estimation \cite{yano2020goal,du_adaptive_2021}. In the adjoint-based optimization setting, we require the hyperreduced primal residual, adjoint residual, AL function, and gradient reconstruction operator accurately approximate the corresponding reduced quantity \cite{wen_globally_2023}. However, the presence of the quadratic penalty would lead to a nonlinear optimization problem (as opposed to a linear program) if we directly control the error in the AL function. Instead, we control the error in the Lagrangian and constraint functions separately to yield a linear program.

To this end, we let $\Phibold$ be a given reduced basis and $\Xibold\subset \Dcal$ be a collection
of EQP training parameters, and define the EQP weights for residual terms, $\rhobold^\star$,
as the solution of the following linear program
\begin{equation}\label{eqn:linprog}
 \rhobold^\star = \argoptunc{\rhobold\in\Ccal^\mathtt{nn}\cap\Ccal_{\Phibold,\Xibold,\deltabold}}{\sum_{e=1}^{N_\mathtt{e}} \rho_e},
\end{equation}
where $\Ccal^\mathtt{nn}$ is the set of nonnegative weights
\begin{equation}
 \Ccal^\mathtt{nn} \coloneqq \left\{ \rhobold\in\Rbb^{N_\mathtt{e}} \suchthat \rho_e \geq 0, ~ e = 1,\dots,N_\mathtt{e} \right\},
\end{equation}
$ \deltabold = (\delta_\mathtt{dv},\delta_\mathtt{rp},\delta_\mathtt{lra},\delta_\mathtt{lga},\delta_\mathtt{c}, \delta_\mathtt{dc\mu}, \delta_\mathtt{dcy}, \delta_\mathtt{rs}, \delta_\mathtt{lq}) \in\Rbb^9$ is a collection of tolerances, and $\Ccal_{\Phibold,\Xibold,\deltabold_\mathtt{r}}\subset\Rbb^{N_\mathtt{e}}$ is the intersection of some subset
of the following accuracy constraints\footnote{Superscript legend: 
$\mathtt{nn}$ = \underline{n}on\underline{n}egativity, 
$\mathtt{dv}$ = \underline{d}omain \underline{v}olume, 
$\mathtt{rp}$ = \underline{p}rimal \underline{r}esidual, 
$\mathtt{lra}$ = \underline{L}agrangian \underline{a}djoint \underline{r}esidual, 
$\mathtt{lga}$ = \underline{L}agrangian \underline{a}djoint \underline{g}radient reconstruction,
$\mathtt{c}$ = \underline{c}onstraints, 
$\mathtt{dc\mu}$ = \underline{d}erivatives of the \underline{c}onstraints with respect to $\underline{\mubold}$,
$\mathtt{dcy}$ = \underline{d}erivatives of the \underline{c}onstraints with respect to $\underline{\ybm}$,
$\mathtt{lq}$ = \underline{L}agrangian \underline{q}uantity of interest, 
$\mathtt{rs}$ = \underline{s}ensitivity \underline{r}esidual.}, i.e.,
\begin{equation}
  \Ccal_{\Phibold,\Xibold,\deltabold} \coloneqq
  \Ccal_{\delta_\mathtt{dv}}^\mathtt{dv} \cap
  \Ccal_{\Phibold,\Xibold,\delta_\mathtt{rp}}^\mathtt{rp} \cap
  \Ccal_{\Phibold,\Xibold,\delta_\mathtt{lra}}^\mathtt{lra} \cap
  \Ccal_{\Phibold,\Xibold,\delta_\mathtt{lga}}^\mathtt{lga} \cap
  \Ccal_{\Phibold,\Xibold,\delta_\mathtt{c}}^\mathtt{c} \cap
  \Ccal_{\Phibold,\Xibold,\delta_\mathtt{dc\mu}}^\mathtt{dc\mu} \cap
  \Ccal_{\Phibold,\Xibold,\delta_\mathtt{dcy}}^\mathtt{dcy} \cap
  \Ccal_{\Phibold,\Xibold,\delta_\mathtt{rs}}^\mathtt{rs} \cap
  \Ccal_{\Phibold,\Xibold,\delta_\mathtt{lq}}^\mathtt{lq}, 
\end{equation}
where
\begin{equation}\label{eqn:eqp:rescon}
\begin{aligned}
 \Ccal_{\delta}^\mathtt{dv} &\coloneqq \left\{ \rhobold\in\Rbb^{N_\mathtt{e}}\suchthat \left| |\Omega| - \sum_{e=1}^{N_\mathtt{e}} \rho_e|\Omega_e|\right| \leq \delta\right\}, \\
\Ccal_{\Phibold,\Xibold,\delta}^\mathtt{rp} &\coloneqq \left\{ \rhobold\in\Rbb^{N_\mathtt{e}}\suchthat  \norm{\hat\rbm_\Phibold(\hat\ybm_\Phibold^\star(\mubold),\mubold)-\tilde\rbm_\Phibold(\hat\ybm_\Phibold^\star(\mubold),\mubold;\rhobold)}_\infty \leq \delta, \forall \mubold\in\Xibold\right\}, \\
\Ccal_{\Phibold,\Xibold,\delta}^\mathtt{lra} &\coloneqq \left\{ \rhobold\in\Rbb^{N_\mathtt{e}} \suchthat  \norm{\hat\rbm_\Phibold^{\mathtt{L},\lambda}(\hat\lambdabold_\Phibold^\star(\mubold;\thetabold,\tau),\hat\ybm_\Phibold^\star(\mubold),\mubold;\thetabold)-\tilde\rbm_\Phibold^{\mathtt{L},\lambda}(\hat\lambdabold_\Phibold^\star(\mubold;\thetabold,\tau),\hat\ybm_\Phibold^\star(\mubold),\mubold;\rhobold,\thetabold)}_\infty \leq \delta, \forall \mubold\in\Xibold\right\}, \\
\Ccal_{\Phibold,\Xibold,\delta}^\mathtt{lga} &\coloneqq \left\{ \rhobold\in\Rbb^{N_\mathtt{e}}\suchthat  \norm{\hat\gbm_\Phibold^{\mathtt{L},\lambda}(\hat\lambdabold_\Phibold^\star(\mubold;\thetabold,\tau),\hat\ybm_\Phibold^\star(\mubold),\mubold;\thetabold)-\tilde\gbm_\Phibold^{\mathtt{L},\lambda}(\hat\lambdabold_\Phibold^\star(\mubold;\thetabold,\tau),\hat\ybm_\Phibold^\star(\mubold),\mubold;\rhobold,\thetabold)}_\infty \leq \delta, \forall \mubold\in\Xibold\right\},\\
\Ccal_{\Phibold,\Xibold,\delta}^\mathtt{c} &\coloneqq \left\{ \rhobold\in\Rbb^{N_\mathtt{e}}\suchthat  \norm{\hat{\cbm}_\Phibold(\hat\ybm_\Phibold^\star(\mubold),\mubold)-\tilde{\cbm}_\Phibold(\hat\ybm_\Phibold^\star(\mubold),\mubold;\rhobold)}_\infty \leq \frac{\delta}{\tau}, \forall \mubold\in\Xibold \right\}, \\
\Ccal_{\Phibold,\Xibold,\delta}^\mathtt{dc\mu} &\coloneqq \left\{ \rhobold\in\Rbb^{N_\mathtt{e}}\suchthat  
\norm{
  \partial_\mubold \hat \cbm_\Phibold(\hat\ybm_\Phibold^\star(\mubold),\mubold)- \partial_\mubold \tilde \cbm_\Phibold(\hat\ybm_\Phibold^\star(\mubold),\mubold;\rhobold)
}_\infty \leq \frac{\delta}{\tau}, \forall \mubold\in\Xibold\right\}, \\
\Ccal_{\Phibold,\Xibold,\delta}^\mathtt{dcy} &\coloneqq \left\{ \rhobold\in\Rbb^{N_\mathtt{e}}\suchthat 
\norm{
  \partial_\ybm \hat \cbm_\Phibold(\hat\ybm_\Phibold^\star(\mubold),\mubold)-
  \partial_\ybm \tilde \cbm_\Phibold(\hat\ybm_\Phibold^\star(\mubold),\mubold;\rhobold)
  }_\infty \leq \frac{\delta}{\tau}, \forall \mubold\in\Xibold\right\},\\
\Ccal_{\Phibold,\Xibold,\delta}^\mathtt{rs} &\coloneqq \left\{ \rhobold\in\Rbb^{N_\mathtt{e}} \suchthat  \norm{\hat\rbm_\Phibold^\partial(\partial_\mubold\hat\ybm_\Phibold^\star(\mubold),\hat\ybm_\Phibold^\star(\mubold),\mubold)-\tilde\rbm_\Phibold^\partial(\partial_\mubold\hat\ybm_\Phibold^\star(\mubold),\hat\ybm_\Phibold^\star(\mubold),\mubold;\rhobold)}_\infty \leq \delta, \forall \mubold\in\Xibold\right\},\\
\Ccal_{\Phibold,\Xibold,\delta}^\mathtt{lq} &\coloneqq \left\{ \rhobold\in\Rbb^{N_\mathtt{e}}\suchthat  \abs{\hat{\ell}^\mathtt{L}_\Phibold(\hat\ybm_\Phibold^\star(\mubold),\mubold;\thetabold)-\tilde{\ell}^\mathtt{L}_\Phibold(\hat\ybm_\Phibold^\star(\mubold),\mubold;\rhobold,\thetabold)} \leq \delta, \forall \mubold\in\Xibold\right\},
\end{aligned}
\end{equation}
where $|\Scal|$ is the volume of the set $\Scal$, $\partial_\ybm \hat \cbm_\Phibold = \pder{\hat\cbm_\Phibold}{\ybm}$ and $\partial_\mubold \hat \cbm_\Phibold = \pder{\hat\cbm_\Phibold}{\mubold}$.

Each constraint in (\ref{eqn:eqp:rescon}) ensures a selected reduced quantity introduced in  Section~\ref{sec:hyperreduction:rom} is sufficiently approximated by the corresponding hyperreduced quantity in Section~\ref{sec:hyperreduction:eqp_form}. Finally, we define $\rhobold^\star : (\Phibold,\Xibold,\deltabold_\mathtt{r}) \mapsto \rhobold^\star(\Phibold,\Xibold,\deltabold_\mathtt{r})$
as the implicit map from a given reduced basis $\Phibold$, EQP training set $\Xibold$, and tolerances $\deltabold$ to the solution of the linear program in (\ref{eqn:linprog}). 
For now, we leave the tolerances and EQP training set unspecified; in Section~\ref{sec:eqp_trammo}, these will be chosen to guarantee global convergence of the trust-region method to a local minimum of the unreduced optimization problem in (\ref{eqn:hdm_full_space_auglag}).
\begin{remark} \label{rem:eqp_props}
The optimization problem in (\ref{eqn:linprog}) is guaranteed to have a feasible solution (regardless of which subset of the constraints is used)
because all hyperreduced quantities are equivalent to the corresponding reduced quantity when $\rhobold=\onebold$. 
\end{remark}
\begin{remark}
The constraints on the Lagrangian part of the augmented Lagrangian function $\Ccal_{\Phibold,\Xibold,\delta}^\mathrm{lq}$ and sensitivity residual $\Ccal_{\Phibold,\Xibold,\delta}^\mathrm{rs}$ are not required for global convergence of the trust-region method in Section~\ref{sec:eqp_trammo}; however, similar constraints were shown to accelerate convergence in the unconstrained setting \cite{wen_globally_2023}.
\end{remark}

\subsection{Error estimation}
\label{sec:hyperreduction:eqp_err_est}
We now introduce residual-based error estimates for the hyperreduced quantity of interest and its gradient. We begin with
a residual-based error bound from \cite{wen_globally_2023} that applies under regularity assumptions (Assumptions~\ref{assum:hdm}-\ref{assum:eqp} in~\ref{sec:appendix_A}) independent
of the training procedure for the reduced basis $\Phibold$ and weight vector $\rhobold$ for fixed Lagrange multiplier estimate $\thetabold$ and penalty parameter $\tau$.
\begin{theorem}\label{the:qoi_grad_errbnd}
Under Assumptions~\ref{assum:hdm}-\ref{assum:eqp}, there exist constants $c_1,c_2>0$ such that for any $\mubold\in\Dcal$, $\rhobold\in\Rcal$, $\thetabold\in\Rbb^{N_\cbm}$, and $\tau > 0$
\begin{equation}\label{eqn:eqp:qoi_errbnd}
\begin{aligned}
 \abs{f(\mubold;\thetabold,\tau) - \tilde{f}_\Phibold(\mubold;\rhobold,\thetabold,\tau)} 
 &\leq c_1\norm{\rbm(\Phibold\hat\ybm_\Phibold^\star(\mubold),\mubold)} + c_2\norm{\tilde\rbm_\Phibold(\hat\ybm_\Phibold^\star(\mubold),\mubold;\rhobold)} + \\
 &\quad \abs{\hat\ell_\Phibold(\hat\ybm_\Phibold^\star(\mubold),\mubold;\thetabold,\tau)-\tilde\ell_\Phibold(\hat\ybm_\Phibold^\star(\mubold),\mubold;\rhobold,\thetabold,\tau)}. 
\end{aligned}
\end{equation}
Furthermore, there exist constants $c_1',c_2',c_3',c_4'>0$ such that
\begin{equation}\label{eqn:eqp:qoi_grad_errbnd}
\begin{aligned}
 \norm{\nabla f(\mubold;\thetabold,\tau) - \nabla\tilde{f}_\Phibold(\mubold;\rhobold,\thetabold,\tau)} \leq
 &c_1'\norm{\rbm(\Phibold\hat\ybm_\Phibold^\star(\mubold),\mubold)} +
 c_2'\norm{\rbm^\lambda(\Phibold \hat\lambdabold_\Phibold^\star(\mubold;\thetabold,\tau),\Phibold\hat\ybm_\Phibold^\star(\mubold),\mubold;\thetabold,\tau)} + \\
 & c_3'\norm{\tilde\rbm_\Phibold(\hat\ybm_\Phibold^\star(\mubold),\mubold;\rhobold)} + 
 c_4'\norm{\tilde\rbm^\lambda_\Phibold(\hat\lambdabold_\Phibold^\star(\mubold;\thetabold,\tau),\hat\ybm_\Phibold^\star(\mubold),\mubold;\rhobold,\thetabold,\tau)} + \\
 & \norm{\hat\gbm_\Phibold^\lambda(\hat\lambdabold_\Phibold^\star(\mubold;\thetabold,\tau),\hat\ybm_\Phibold^\star(\mubold),\mubold;\thetabold,\tau) - \tilde\gbm_\Phibold^\lambda(\hat\lambdabold_\Phibold^\star(\mubold;\thetabold,\tau),\hat\ybm_\Phibold^\star(\mubold),\mubold;\rhobold,\thetabold,\tau)}.
\end{aligned}
\end{equation}
\begin{proof}
Follows from Theorem 1 in \cite{wen_globally_2023}.
\end{proof}
\end{theorem}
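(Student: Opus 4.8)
The plan is to reduce both bounds to Theorem~1 of \cite{wen_globally_2023} by inserting the reduced-order model (ROM) quantities as intermediaries and estimating each resulting piece with a residual-based argument. Write $\hat\ybm^\star := \hat\ybm_\Phibold^\star(\mubold)$ and $\tilde\ybm^\star := \tilde\ybm_\Phibold^\star(\mubold;\rhobold)$ for the ROM and hyperreduced primal solutions, and $\hat\lambdabold^\star$, $\tilde\lambdabold^\star$ for the corresponding adjoints; throughout I suppress the fixed arguments $\mubold,\thetabold,\tau$ (and $\rhobold$). The regularity hypotheses (Assumptions~\ref{assum:hdm}--\ref{assum:eqp}) provide, uniformly in $\Phibold$ and $\rhobold$, (i) Lipschitz continuity of the AL function and of the gradient-reconstruction operator $\glam$ in their state and adjoint arguments, and (ii) uniform bounds on the inverses of the primal, hyperreduced-primal, and (via their affine-in-$\zbm$ structure) adjoint Jacobians. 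Property (ii) is what converts each state/adjoint error into the norm of the residual that vanishes at the exact (hyper)reduced solution.

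For the objective bound \eqref{eqn:eqp:qoi_errbnd}, I would insert $\hat\ell_\Phibold(\hat\ybm^\star)=\ell(\Phibold\hat\ybm^\star)$ and $\tilde\ell_\Phibold(\hat\ybm^\star)$ and telescope,
\begin{equation*}
\abs{f - \tilde f_\Phibold}
\le
\abs{\ell(\ubm^\star) - \ell(\Phibold\hat\ybm^\star)}
+ \abs{\hat\ell_\Phibold(\hat\ybm^\star) - \tilde\ell_\Phibold(\hat\ybm^\star)}
+ \abs{\tilde\ell_\Phibold(\hat\ybm^\star) - \tilde\ell_\Phibold(\tilde\ybm^\star)}.
\end{equation*}
The middle term is exactly the third summand of the claim. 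For the first term, Lipschitz continuity of $\ell$ in its primal argument together with $\rbm(\ubm^\star,\mubold)=\zerobold$ and the inverse-Jacobian bound give $\norm{\ubm^\star - \Phibold\hat\ybm^\star}\le C\norm{\rbm(\Phibold\hat\ybm^\star,\mubold)}$, producing the $c_1$ term. The third term is handled identically at the hyperreduced level: Lipschitz continuity of $\tilde\ell_\Phibold$ plus $\tilde\rbm_\Phibold(\tilde\ybm^\star,\mubold;\rhobold)=\zerobold$ give $\norm{\hat\ybm^\star - \tilde\ybm^\star}\le C\norm{\tilde\rbm_\Phibold(\hat\ybm^\star,\mubold;\rhobold)}$, producing the $c_2$ term.

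For the gradient bound \eqref{eqn:eqp:qoi_grad_errbnd}, I would split $\nabla f - \nabla\tilde{f}_\Phibold$ through the ROM gradient $\nabla\hat{f}_\Phibold = \hat\gbm_\Phibold^\lambda(\hat\lambdabold^\star,\hat\ybm^\star)^T = \glam(\Phibold\hat\lambdabold^\star,\Phibold\hat\ybm^\star)^T$. The high-dimensional-model--to--ROM piece $\norm{\glam(\lambdabold^\star,\ubm^\star) - \glam(\Phibold\hat\lambdabold^\star,\Phibold\hat\ybm^\star)}$ is controlled by Lipschitz continuity of $\glam$ in its primal and adjoint arguments; the primal error again contributes $\norm{\rbm(\Phibold\hat\ybm^\star,\mubold)}$, while the adjoint error is bounded by first comparing $\Phibold\hat\lambdabold^\star$ to the exact adjoint of the ROM primal state (affine adjoint equation with bounded inverse Jacobian, giving $\norm{\rbm^\lambda(\Phibold\hat\lambdabold^\star,\Phibold\hat\ybm^\star,\mubold;\thetabold,\tau)}$) and then invoking Lipschitz dependence of the adjoint on the primal state (giving another $\norm{\rbm(\Phibold\hat\ybm^\star,\mubold)}$ contribution). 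This yields the $c_1'$ and $c_2'$ terms. The ROM--to--hyperreduced piece is treated the same way after inserting $\tilde\gbm_\Phibold^\lambda(\hat\lambdabold^\star,\hat\ybm^\star)$: the direct operator difference $\norm{\hat\gbm_\Phibold^\lambda(\hat\lambdabold^\star,\hat\ybm^\star) - \tilde\gbm_\Phibold^\lambda(\hat\lambdabold^\star,\hat\ybm^\star)}$ is the final summand, and the hyperreduced primal and adjoint state errors contribute $\norm{\tilde\rbm_\Phibold(\hat\ybm^\star,\mubold;\rhobold)}$ and $\norm{\tilde\rbm^\lambda_\Phibold(\hat\lambdabold^\star,\hat\ybm^\star,\mubold;\rhobold,\thetabold,\tau)}$, i.e.\ the $c_3'$ and $c_4'$ terms.

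The main obstacle is the quadratic penalty $\tfrac{\tau}{2}\norm{\cbm}_2^2$: as noted in Remark~\ref{rem:auglag} it couples elemental quantities nonlinearly, and it makes both $\glam$ and the adjoint residual depend nonlinearly on the state through $\cbm$. The cleanest way around this is to use the regrouped forms \eqref{eqn:hdm_adj_res_regroup} and \eqref{eqn:hdm_grad_regroup} (and their (hyper)reduced analogues), which isolate the penalty/constraint contributions so that the Lagrangian part is handled \emph{verbatim} by Theorem~1 of \cite{wen_globally_2023} while the remaining penalty terms are smooth functions of $\cbm$, $\partial_\mubold\cbm$, and $\partial_\ubm\cbm$. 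The real work is therefore verifying that Assumptions~\ref{assum:hdm}--\ref{assum:eqp} still furnish the Lipschitz and inverse-Jacobian constants used above once the generic objective of \cite{wen_globally_2023} is replaced by the AL; this holds because the penalty only augments the objective and adjoint operators with smooth, bounded terms, so the constants exist (depending on the problem data and on $\thetabold,\tau$, but \emph{not} on the reduced basis $\Phibold$ or weights $\rhobold$), which is precisely the uniformity needed downstream in the trust-region convergence theory.
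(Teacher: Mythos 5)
Your proposal is correct and takes essentially the same approach as the paper: the paper's entire proof is a one-line citation of Theorem~1 in \cite{wen_globally_2023}, and your plan likewise reduces both bounds to that theorem, with the telescoping, Lipschitz-continuity, and inverse-Jacobian details you supply being the standard internals of that result. The only simplification available is that the ``real work'' you flag at the end is already discharged by hypothesis, since Assumptions~\ref{assum:hdm}--\ref{assum:eqp} impose the required regularity directly on the augmented Lagrangian $\ell$ and $\tilde{\ell}_\Phibold$ (rather than on $j$), so the prior theorem applies verbatim with the AL function in place of the generic quantity of interest.
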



Now we specialize the results in Theorem~\ref{the:qoi_grad_errbnd} with requirements on the training procedure for $\Phibold$ and $\rhobold$.
In particular, if the HDM primal and adjoint solution at a given $\mubold\in\Dcal$ is included in the column space of the
reduced basis, then the first term in (\ref{eqn:eqp:qoi_errbnd}) and the first two terms in (\ref{eqn:eqp:qoi_grad_errbnd}) are zero. All remaining terms are the difference
between the reduced and hyperreduced quantity evaluated at the reduced state $\hat\ybm_\Phibold^\star$, which are
exactly controlled by the EQP constraints in Section~\ref{sec:hyperreduction:eqp_training}, which leads to the following corollaries.

\begin{corollary}\label{cor:qoi_errbnd}
Suppose Assumptions~\ref{assum:hdm}-\ref{assum:eqp} hold with $\Rcal\supset\Ccal_{\Phibold,\Xibold,\delta_\mathtt{rp}}^\mathtt{rp} \cap \Ccal_{\Phibold,\Xibold,\delta_\mathtt{lq}}^\mathtt{lq} \cap
\Ccal_{\Phibold,\Xibold,\delta_\mathtt{c}}^\mathtt{c}$ and consider any $\mubold\in\Dcal$. Then, if the reduced basis
$\Phibold\in\Rbb^{N_\ubm\times n}$ satisfies
\begin{equation}\label{eqn:prim_dual_basis}
 \ubm^\star(\mubold) \in \mathrm{Ran}~\Phibold, \qquad
 \lambdabold^\star(\mubold) \in \mathrm{Ran}~\Phibold,
\end{equation}
where $\mathrm{Ran}(\Phibold)$ indicates the column space of $\Phibold$, and the weight vector $\rhobold$ is the solution of (\ref{eqn:linprog}) with constraint set
$\Ccal_{\Phibold,\Xibold,\deltabold} \subseteq \Ccal_{\Phibold,\Xibold,\delta_\mathtt{rp}}^\mathtt{rp} \cap \Ccal_{\Phibold,\Xibold,\delta_\mathtt{lq}}^\mathtt{lq} \cap
\Ccal_{\Phibold,\Xibold,\delta_\mathtt{c}}^\mathtt{c}$ and EQP training set $\Xibold\subset\Dcal$ with $\mubold\in\Xibold$, there
exist constants $c_2,c_3 > 0$ (independent of $\mubold$) such that
\begin{equation} \label{eqn:eqp:qoi_errbnd2}
 \abs{f(\mubold;\thetabold,\tau) - \tilde{f}_\Phibold(\mubold;\rhobold,\thetabold,\tau)} \leq c_2\delta_\mathtt{rp} + \delta_\mathtt{lq} + \tau c_3\delta_\mathtt{c}.
\end{equation}
\begin{proof}
See~\ref{sec:appendix_B}.
\end{proof}
\end{corollary}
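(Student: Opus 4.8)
The plan is to apply Theorem~\ref{the:qoi_grad_errbnd} directly and show that, under the stated hypotheses, the first term on the right-hand side of (\ref{eqn:eqp:qoi_errbnd}) vanishes exactly while the two remaining terms are each controlled by one of the three active EQP constraints $\Ccal_{\Phibold,\Xibold,\delta_\mathtt{rp}}^\mathtt{rp}$, $\Ccal_{\Phibold,\Xibold,\delta_\mathtt{lq}}^\mathtt{lq}$, and $\Ccal_{\Phibold,\Xibold,\delta_\mathtt{c}}^\mathtt{c}$. First I would exploit the primal representability condition $\ubm^\star(\mubold)\in\mathrm{Ran}\,\Phibold$: writing $\ubm^\star(\mubold)=\Phibold\ybm_0$ and applying Galerkin projection gives $\hat\rbm_\Phibold(\ybm_0,\mubold)=\Phibold^T\rbm(\ubm^\star(\mubold),\mubold)=\zerobold$, so by the uniqueness assumption on the ROM solution we get $\Phibold\hat\ybm_\Phibold^\star(\mubold)=\ubm^\star(\mubold)$. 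Consequently $\rbm(\Phibold\hat\ybm_\Phibold^\star(\mubold),\mubold)=\zerobold$ and the term $c_1\norm{\rbm(\Phibold\hat\ybm_\Phibold^\star(\mubold),\mubold)}$ is identically zero. (Only the primal half of (\ref{eqn:prim_dual_basis}) is needed here; the adjoint condition is reserved for the gradient bound.)

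Next I would bound the second term using $\Ccal^\mathtt{rp}$. Since $\hat\ybm_\Phibold^\star(\mubold)$ solves the ROM, the \emph{reduced} primal residual $\hat\rbm_\Phibold(\hat\ybm_\Phibold^\star(\mubold),\mubold)$ is zero, so the $\Ccal^\mathtt{rp}$ membership condition evaluated at $\mubold\in\Xibold$ collapses to $\norm{\tilde\rbm_\Phibold(\hat\ybm_\Phibold^\star(\mubold),\mubold;\rhobold)}_\infty\le\delta_\mathtt{rp}$. Norm equivalence on $\Rbb^{n}$ converts the generic norm appearing in (\ref{eqn:eqp:qoi_errbnd}) into a constant multiple of the $\infty$-norm, and I would absorb that equivalence constant together with the $c_2$ of the theorem into the corollary's $c_2$, yielding a contribution $\le c_2\delta_\mathtt{rp}$.

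The remaining work is the augmented-Lagrangian difference term, which I would split using the Lagrangian/penalty decomposition: $\hat\ell_\Phibold-\tilde\ell_\Phibold=(\hat\ell^\mathtt{L}_\Phibold-\tilde\ell^\mathtt{L}_\Phibold)+\tfrac{\tau}{2}\paren{\norm{\hat\cbm_\Phibold}_2^2-\norm{\tilde\cbm_\Phibold}_2^2}$, where $\hat\cbm_\Phibold$ and $\tilde\cbm_\Phibold$ abbreviate the reduced and hyperreduced constraint sums at $\hat\ybm_\Phibold^\star(\mubold)$. The Lagrangian part is bounded directly by $\delta_\mathtt{lq}$ via $\Ccal^\mathtt{lq}$. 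For the penalty part I would use the difference-of-squares identity $\abs{\norm{a}_2^2-\norm{b}_2^2}=\abs{\inner{a-b}{a+b}}\le\norm{a-b}_2\paren{\norm{a}_2+\norm{b}_2}$ with $a=\hat\cbm_\Phibold$, $b=\tilde\cbm_\Phibold$; the factor $\norm{\hat\cbm_\Phibold-\tilde\cbm_\Phibold}_2$ is then controlled (after norm equivalence on $\Rbb^{N_\cbm}$) by the $\Ccal^\mathtt{c}$ tolerance $\delta_\mathtt{c}/\tau$, whose $1/\tau$ is precisely what recombines with the penalty weight $\tau$.

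The main obstacle is this penalty term, and it has two delicate pieces. The first is establishing a bound on $\norm{\hat\cbm_\Phibold}_2+\norm{\tilde\cbm_\Phibold}_2$ that is \emph{uniform in} $\mubold$: here I would invoke the regularity/compactness hypotheses (Assumptions~\ref{assum:hdm}--\ref{assum:eqp}), noting that $\mubold\mapsto\cbm(\ubm^\star(\mubold),\mubold)$ is continuous on the compact set $\Dcal$ and hence bounded, with $\tilde\cbm_\Phibold$ bounded by the same constant plus the (small) $\Ccal^\mathtt{c}$ deviation. The second piece is the bookkeeping of the $\tau$ factor: combining $\tfrac{\tau}{2}$, the tolerance $\delta_\mathtt{c}/\tau$, and the uniform bound on the sum-of-norms factor, and collecting all equivalence and continuity constants into $c_3$, yields the stated contribution of the form $\tau c_3\delta_\mathtt{c}$. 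Summing the three bounded terms then gives (\ref{eqn:eqp:qoi_errbnd2}); the detailed constant tracking is routine and is where I would defer to~\ref{sec:appendix_B}.
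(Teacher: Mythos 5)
Your proposal is correct and follows essentially the same route as the paper's proof in \ref{sec:appendix_B}: the primal representability condition kills the $c_1\norm{\rbm(\Phibold\hat\ybm_\Phibold^\star(\mubold),\mubold)}$ term, the $\mathtt{rp}$ constraint controls the hyperreduced residual term, and the augmented Lagrangian difference is split into a Lagrangian part (bounded by $\delta_\mathtt{lq}$) and a penalty part handled by a difference-of-squares factorization together with boundedness of $\cbm$ and $\tilde\cbm_\Phibold$ and the $1/\tau$-scaled $\mathtt{c}$ constraint. The only cosmetic difference is that you use $\abs{\inner{a-b}{a+b}}$ with Cauchy--Schwarz where the paper factors $(\norm{a}-\norm{b})(\norm{a}+\norm{b})$ and applies the reverse triangle inequality, which is the same estimate.
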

 
\begin{corollary}\label{cor:qoi_grad_errbnd}
Suppose Assumptions~\ref{assum:hdm}-\ref{assum:eqp} hold with $\Rcal\supset\Ccal_{\Phibold,\Xibold,\delta_\mathtt{rp}}^\mathtt{rp} \cap   \Ccal_{\Phibold,\Xibold,\delta_\mathtt{lra}}^\mathtt{lra} \cap
\Ccal_{\Phibold,\Xibold,\delta_\mathtt{lga}}^\mathtt{lga} \cap
\Ccal_{\Phibold,\Xibold,\delta_\mathtt{c}}^\mathtt{c} \cap
\Ccal_{\Phibold,\Xibold,\delta_\mathtt{dc\mu}}^\mathtt{dc\mu} \cap
\Ccal_{\Phibold,\Xibold,\delta_\mathtt{dcy}}^\mathtt{dcy}$ and consider any $\mubold\in\Dcal$.
Then, if the reduced basis
$\Phibold\in\Rbb^{N_\ubm\times n}$ satisfies (\ref{eqn:prim_dual_basis}) and the  weight vector is the solution of (\ref{eqn:linprog}) with constraint set
$\Ccal_{\Phibold,\Xibold,\deltabold} \subseteq \Ccal_{\Phibold,\Xibold,\delta_\mathtt{rp}}^\mathtt{rp} \cap \Ccal_{\Phibold,\Xibold,\delta_\mathtt{lra}}^\mathtt{lra} \cap
\Ccal_{\Phibold,\Xibold,\delta_\mathtt{lga}}^\mathtt{lga} \cap
\Ccal_{\Phibold,\Xibold,\delta_\mathtt{c}}^\mathtt{c} \cap
\Ccal_{\Phibold,\Xibold,\delta_\mathtt{dc\mu}}^\mathtt{dc\mu} \cap
\Ccal_{\Phibold,\Xibold,\delta_\mathtt{dcy}}^\mathtt{dcy}$ and EQP training set $\Xibold\subset\Dcal$ with $\mubold\in\Xibold$, there exist constants
$c_3',c_4',\dots,c_7'>0$ (independent of $\mubold$) such that
\begin{equation} \label{eqn:eqp:qoi_grad_errbnd2}
\norm{\nabla f(\mubold;\thetabold,\tau) - \nabla\tilde{f}_\Phibold(\mubold;\rhobold,\thetabold,\tau)} \leq
c_3'\delta_\mathtt{rp}+c_4'\delta_\mathtt{lra}+\delta_\mathtt{lga}+c_5'\tau\delta_\mathtt{c}+c_6'\tau\delta_\mathtt{dcy}+c_7'\tau\delta_\mathtt{dc\mu}.
\end{equation}
\begin{proof}
See~\ref{sec:appendix_B}.
\end{proof}
\end{corollary}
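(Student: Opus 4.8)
The plan is to specialize the five-term bound (\ref{eqn:eqp:qoi_grad_errbnd}) of Theorem~\ref{the:qoi_grad_errbnd}, showing that under the hypotheses the first two terms vanish while the remaining three are each controlled by the EQP tolerances in $\deltabold$, exactly in the spirit of Corollary~\ref{cor:qoi_errbnd}. First I would use the primal--dual basis condition (\ref{eqn:prim_dual_basis}) to eliminate the two HDM-residual terms. Since $\ubm^\star(\mubold)\in\mathrm{Ran}~\Phibold$, write $\ubm^\star(\mubold)=\Phibold\ybm_0$; substituting into (\ref{eqn:rom_res}) gives $\hat\rbm_\Phibold(\ybm_0,\mubold)=\Phibold^T\rbm(\ubm^\star(\mubold),\mubold)=\zerobold$, so uniqueness of the reduced primal solution forces $\Phibold\hat\ybm_\Phibold^\star(\mubold)=\ubm^\star(\mubold)$ and hence $\rbm(\Phibold\hat\ybm_\Phibold^\star(\mubold),\mubold)=\zerobold$. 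The analogous argument using $\lambdabold^\star(\mubold)\in\mathrm{Ran}~\Phibold$ and the identity $\hrlam_\Phibold=\Phibold^T\rlam(\Phibold\,\cdot,\Phibold\,\cdot,\dots)$ from (\ref{eqn:rom_adj_res}) gives $\Phibold\hat\lambdabold_\Phibold^\star(\mubold;\thetabold,\tau)=\lambdabold^\star(\mubold)$, so the second term $\norm{\rlam(\Phibold\hat\lambdabold_\Phibold^\star,\Phibold\hat\ybm_\Phibold^\star,\mubold;\thetabold,\tau)}$ vanishes as well.

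Next I would recast the three surviving terms as reduced-minus-hyperreduced differences, which are precisely the quantities bounded by (\ref{eqn:eqp:rescon}). Because $\hat\rbm_\Phibold(\hat\ybm_\Phibold^\star,\mubold)=\zerobold$ and $\hrlam_\Phibold(\hat\lambdabold_\Phibold^\star,\hat\ybm_\Phibold^\star,\mubold;\thetabold,\tau)=\zerobold$, the third term equals $\norm{\hat\rbm_\Phibold-\tilde\rbm_\Phibold}$ and is bounded through $\Ccal^\mathtt{rp}_{\Phibold,\Xibold,\delta_\mathtt{rp}}$, while the fourth equals $\norm{\hrlam_\Phibold-\trlam_\Phibold}$. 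Splitting each adjoint residual into its Lagrangian part and quadratic-penalty part as in (\ref{eqn:hdm_adj_res_regroup}) and (\ref{eqn:eqp_elem_rlam}), the Lagrangian difference is bounded through $\Ccal^\mathtt{lra}_{\Phibold,\Xibold,\delta_\mathtt{lra}}$. The fifth term $\norm{\hat\gbm^\lambda_\Phibold-\tilde\gbm^\lambda_\Phibold}$ is split the same way using (\ref{eqn:rom_grad_regroup}) and (\ref{eqn:eqp_elem_glam}), its Lagrangian part bounded through $\Ccal^\mathtt{lga}_{\Phibold,\Xibold,\delta_\mathtt{lga}}$, contributing $\delta_\mathtt{lga}$ with unit coefficient.

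The heart of the argument is the penalty parts of the fourth and fifth terms, which are bilinear in $\tilde\cbm_\Phibold$ and its derivatives. For the gradient reconstruction the penalty contribution is $\tau[(\hat\cbm_\Phibold)^T\partial_\mubold\hat\cbm_\Phibold-(\tilde\cbm_\Phibold)^T\partial_\mubold\tilde\cbm_\Phibold]$, which I would estimate with the telescoping identity $A^T B-\tilde A^T\tilde B=(A-\tilde A)^T B+\tilde A^T(B-\tilde B)$: the factor $A-\tilde A=\hat\cbm_\Phibold-\tilde\cbm_\Phibold$ is controlled by $\Ccal^\mathtt{c}$ and $B-\tilde B=\partial_\mubold\hat\cbm_\Phibold-\partial_\mubold\tilde\cbm_\Phibold$ by $\Ccal^\mathtt{dc\mu}$, while the remaining factors $B$ and $\tilde A$ are bounded uniformly in $\mubold$ by the regularity and compactness of Assumptions~\ref{assum:hdm}--\ref{assum:eqp}. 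The penalty part of the adjoint residual is handled by the same telescoping, with the constraint-Jacobian difference now controlled by $\Ccal^\mathtt{dcy}$ and the constraint-value difference again by $\Ccal^\mathtt{c}$. Combining the $\tau$ prefactors with the tolerances of $\Ccal^\mathtt{c}$, $\Ccal^\mathtt{dcy}$, and $\Ccal^\mathtt{dc\mu}$ produces the $\tau$-weighted contributions $\tau\delta_\mathtt{c}$, $\tau\delta_\mathtt{dcy}$, $\tau\delta_\mathtt{dc\mu}$ in (\ref{eqn:eqp:qoi_grad_errbnd2}). Finally, since the constraints (\ref{eqn:eqp:rescon}) are posed in the $\infty$-norm on the fixed-dimensional reduced space, norm equivalence lets me pass to the norm of Theorem~\ref{the:qoi_grad_errbnd}, and all dimension-dependent equivalence constants together with the uniform bounds on the smooth factors are absorbed into $c_3',\dots,c_7'$.

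The step I expect to be the main obstacle is verifying that these constants are genuinely independent of $\mubold$. The telescoped penalty estimates leave behind the ``smooth'' factors $\hat\cbm_\Phibold$, $\partial_\mubold\hat\cbm_\Phibold$, and the constraint Jacobian evaluated along the reduced solution manifold; one must argue, from continuity of $\mubold\mapsto\hat\ybm_\Phibold^\star(\mubold)$ and compactness of $\Dcal$, that these admit a uniform bound, so that no hidden $\mubold$-dependence is smuggled into $c_3',\dots,c_7'$. Keeping the bilinear estimates uniform while tracking the $\tau$-scaling is the only delicate bookkeeping; everything else is a direct specialization of Theorem~\ref{the:qoi_grad_errbnd}.
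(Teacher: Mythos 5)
Your proposal follows essentially the same route as the paper's proof in \ref{sec:appendix_B}: the primal--dual basis condition kills the two HDM-residual terms of Theorem~\ref{the:qoi_grad_errbnd}, the surviving terms are recast as reduced-minus-hyperreduced differences split into Lagrangian and penalty parts, the bilinear penalty terms are telescoped exactly as you describe, and the uniform boundedness of $\cbm$, $\tilde\cbm$, and their partial derivatives (Assumptions \ref{ah:c:val_deriv_bnd} and \ref{ar:c:val_deriv_bnd}) supplies the $\mubold$-independent constants you flag as the delicate point. The argument is correct and matches the paper's in all essentials.
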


\begin{corollary} \label{cor:both_errbnd}
Suppose Assumptions~\ref{assum:hdm}-\ref{assum:eqp} hold with $\Rcal\supset\Ccal_{\Phibold,\Xibold,\delta_\mathtt{rp}}^\mathtt{rp} \cap \Ccal_{\Phibold,\Xibold,\delta_\mathtt{lra}}^\mathtt{lra} \cap
\Ccal_{\Phibold,\Xibold,\delta_\mathtt{lga}}^\mathtt{lga} \cap
\Ccal_{\Phibold,\Xibold,\delta_\mathtt{c}}^\mathtt{c} \cap
\Ccal_{\Phibold,\Xibold,\delta_\mathtt{dc\mu}}^\mathtt{dc\mu} \cap
\Ccal_{\Phibold,\Xibold,\delta_\mathtt{dcy}}^\mathtt{dcy} \cap
\Ccal_{\Phibold,\Xibold,\delta_\mathtt{lq}}^\mathtt{lq}$
and consider any $\mubold\in\Dcal$. Then, if the reduced basis
$\Phibold\in\Rbb^{N_\ubm\times n}$ satisfies (\ref{eqn:prim_dual_basis}) and the weight vector is the solution of (\ref{eqn:linprog}) with constraint set
$\Ccal_{\Phibold,\Xibold,\deltabold} \subseteq \Ccal_{\Phibold,\Xibold,\delta_\mathtt{rp}}^\mathtt{rp} \cap \Ccal_{\Phibold,\Xibold,\delta_\mathtt{lra}}^\mathtt{lra} \cap
\Ccal_{\Phibold,\Xibold,\delta_\mathtt{lga}}^\mathtt{lga} \cap
\Ccal_{\Phibold,\Xibold,\delta_\mathtt{c}}^\mathtt{c} \cap
\Ccal_{\Phibold,\Xibold,\delta_\mathtt{dc\mu}}^\mathtt{dc\mu} \cap
\Ccal_{\Phibold,\Xibold,\delta_\mathtt{dcy}}^\mathtt{dcy} \cap
\Ccal_{\Phibold,\Xibold,\delta_\mathtt{lq}}^\mathtt{lq}$ and EQP training set $\Xibold\subset\Dcal$ with $\mubold\in\Xibold$, then both (\ref{eqn:eqp:qoi_errbnd2}) and (\ref{eqn:eqp:qoi_grad_errbnd2}) hold.
\begin{proof}
Follows directly from Corollaries~\ref{cor:qoi_errbnd}~and~\ref{cor:qoi_grad_errbnd}.
\end{proof}
\end{corollary}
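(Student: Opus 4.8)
The plan is to show that the hypotheses of Corollary~\ref{cor:both_errbnd}, taken at a single common reduced basis $\Phibold$ and weight vector $\rhobold$, simultaneously entail the hypotheses of Corollary~\ref{cor:qoi_errbnd} and of Corollary~\ref{cor:qoi_grad_errbnd}, so that the two conclusions follow by direct invocation. The entire argument rests on a monotonicity observation: the constraint set over which the linear program (\ref{eqn:linprog}) is solved in the combined corollary is an intersection over the \emph{full} collection of accuracy constraints, hence it is contained in each of the smaller intersections appearing in the two preceding corollaries.

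First I would record the containments. Let $\Ccal_\star$ denote the intersection $\Ccal_{\Phibold,\Xibold,\delta_\mathtt{rp}}^\mathtt{rp}\cap\dots\cap\Ccal_{\Phibold,\Xibold,\delta_\mathtt{lq}}^\mathtt{lq}$ appearing in the hypotheses of Corollary~\ref{cor:both_errbnd}. Because dropping constraints only enlarges a set, $\Ccal_\star$ is contained in the intersection $\Ccal_{\Phibold,\Xibold,\delta_\mathtt{rp}}^\mathtt{rp}\cap\Ccal_{\Phibold,\Xibold,\delta_\mathtt{lq}}^\mathtt{lq}\cap\Ccal_{\Phibold,\Xibold,\delta_\mathtt{c}}^\mathtt{c}$ that governs Corollary~\ref{cor:qoi_errbnd} and in the intersection $\Ccal_{\Phibold,\Xibold,\delta_\mathtt{rp}}^\mathtt{rp}\cap\Ccal_{\Phibold,\Xibold,\delta_\mathtt{lra}}^\mathtt{lra}\cap\Ccal_{\Phibold,\Xibold,\delta_\mathtt{lga}}^\mathtt{lga}\cap\Ccal_{\Phibold,\Xibold,\delta_\mathtt{c}}^\mathtt{c}\cap\Ccal_{\Phibold,\Xibold,\delta_\mathtt{dc\mu}}^\mathtt{dc\mu}\cap\Ccal_{\Phibold,\Xibold,\delta_\mathtt{dcy}}^\mathtt{dcy}$ that governs Corollary~\ref{cor:qoi_grad_errbnd}. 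Since the combined corollary assumes the solved constraint set satisfies $\Ccal_{\Phibold,\Xibold,\deltabold}\subseteq\Ccal_\star$, transitivity of $\subseteq$ immediately verifies the corresponding subset hypothesis of each preceding corollary. The primal-dual basis requirement (\ref{eqn:prim_dual_basis}) and the condition $\mubold\in\Xibold$ are stated identically in all three results and so transfer verbatim.

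Second, I would dispatch the admissibility bookkeeping, which is the only point requiring a moment's care and hence the natural place an error could hide. Each preceding corollary carries a hypothesis of the form $\Rcal\supset(\text{its own intersection})$, whereas the combined corollary assumes only $\Rcal\supset\Ccal_\star$, a weaker inclusion; one must check this mismatch is harmless. It is: the sole role of that hypothesis is to guarantee that the weight vector returned by (\ref{eqn:linprog}) is admissible, i.e., lies in $\Rcal$, so that the residual-based bound of Theorem~\ref{the:qoi_grad_errbnd} may be applied at it. Here the solution $\rhobold$ of the combined program satisfies $\rhobold\in\Ccal_{\Phibold,\Xibold,\deltabold}\subseteq\Ccal_\star\subseteq\Rcal$, so $\rhobold\in\Rcal$ and admissibility holds; moreover $\rhobold\in\Ccal_\star$ means $\rhobold$ meets every one of the individual accuracy constraints, which is precisely what the derivations in~\ref{sec:appendix_B} consume when bounding the residual difference terms left over in (\ref{eqn:eqp:qoi_errbnd}) and (\ref{eqn:eqp:qoi_grad_errbnd}).

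Finally, with all hypotheses of both corollaries now in force at this common pair $(\Phibold,\rhobold)$, I would invoke Corollary~\ref{cor:qoi_errbnd} to obtain the functional bound (\ref{eqn:eqp:qoi_errbnd2}) and Corollary~\ref{cor:qoi_grad_errbnd} to obtain the gradient bound (\ref{eqn:eqp:qoi_grad_errbnd2}), with the constants inherited unchanged, which completes the proof. I expect no substantive obstacle: apart from confirming that the weaker $\Rcal$-inclusion still yields admissibility, every ingredient is an immediate consequence of the fact that intersecting with additional accuracy constraints only shrinks the feasible set.
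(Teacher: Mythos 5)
Your proof is correct and takes essentially the same route as the paper, which simply observes that the result follows directly from Corollaries~\ref{cor:qoi_errbnd} and~\ref{cor:qoi_grad_errbnd}. Your additional care in noting that the hypothesis $\Rcal\supset\Ccal_\star$ is formally weaker than the $\Rcal$-inclusions assumed in the two preceding corollaries, yet still suffices because its only role is to place the LP solution $\rhobold$ in the admissible set $\Rcal$ where Theorem~\ref{the:qoi_grad_errbnd} and Assumption~\ref{assum:eqp} apply, is a legitimate refinement that the paper's one-line proof glosses over.
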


\section{Augmented Lagrangian trust-region method based on hyperreduced models}
\label{sec:trammo}
In this section, we use the hyperreduced models introduced in Section~\ref{sec:hyperreduction} to accelerate the optimization problem of interest (\ref{eqn:auglag_reduced}) and embed it in an augmented Lagrangian framework to solve constrained optimization in (\ref{eqn:hdm_full_space}). To this end, we introduce a trust-region framework equipped with a new asymptotic error bound adapted from \cite{kouri_trust-region_2013} for bound-constrained problems (Section~\ref{sec:trammo:tr_inexact}), the classical augmented Lagrangian framework (Section~\ref{sec:trammo:auglag}), and the proposed approach to train and leverage hyperreduced models in the trust-region framework to ensure global convergence of each augmented Lagrangian subproblem(Section~\ref{sec:eqp_trammo}).

\subsection{An inexact trust-region method to solve bound-constrained problems}
\label{sec:trammo:tr_inexact}
We first consider a generic optimization problem with bound constraints,
\begin{equation}
\label{eqn:generic_bndopt}
\optunc{\mubold \in \Ccal}{F(\mubold)},
\end{equation}
where $F : \Ccal \rightarrow \Rbb$ satisfies Assumptions~\ref{assum:at:tr_inexact}. The problem in (\ref{eqn:generic_bndopt}) can be embedded in the augmented Lagrangian framework as a subproblem to solve the original constrained problem in (\ref{eqn:hdm_full_space}) (Section~\ref{sec:trammo:auglag}). To solve (\ref{eqn:generic_bndopt}), we seek a critical point $\mubold^\star \in \Ccal$ such that $\norm{\chi(\mubold^\star)} = 0$ \cite{conn_trust-region_2000}, where $\chi : \Dcal \mapsto \Rbb^{N_\mubold}$ is the projected gradient
\begin{equation}\label{eqn:chi_f}
\chi(\mubold) \coloneqq \Pbm_\Ccal(\mubold-\nabla F(\mubold), \mubold_l, \mubold_u) - \mubold,
\end{equation}
and $\Pbm_\Ccal(\xbm, \xbm_l, \xbm_u)$ is the Euclidean projection of the vector $\xbm$ onto rectangular box $[\xbm_l, \xbm_u]$, i.e.,
\begin{equation}\label{eqn:proj}
    \bracket{\Pbm_\Ccal(\xbm, \xbm_l, \xbm_u)}_i = 
    \left\{
        \begin{aligned}
            [\xbm_l]_i &\quad \mathtt{if} &\quad &  [\xbm]_i \leq [\xbm_l]_i, \\
            [\xbm]_i &\quad \mathtt{if} &\quad &[\xbm_l]_i < [\xbm]_i < [\xbm_u]_i, \\
            [\xbm_u]_i &\quad \mathtt{if} &\quad &[\xbm]_i \geq [\xbm_u]_i.      
        \end{aligned}
    \right.
\end{equation}

We introduce a trust-region method that constructs a sequence of trust-region centers $\{\mubold_k\}_{k=1}^\infty$ whose limit will be a local solution of (\ref{eqn:generic_bndopt}). At each trust-region center, a smooth approximation model $m_k : \Dcal \rightarrow \Rbb$ is built such that $m_k(\mubold) \approx F(\mubold)$ for all $\mubold\in\{\mubold\in\Ccal | \norm{\mubold-\mubold_k} \leq \Delta_k\}$,
where $\Delta_k>0$ is the trust-region radius. A candidate step $\check\mubold_k$ is
produced by approximately solving the generalized trust-region subproblem
\begin{equation}\label{eqn:tr:subprob}
 \optunc{\mubold \in \Ccal_{\Delta_k}}{m_k(\mubold)},
\end{equation}
where the set of trust-region constraints is defined by
\begin{equation}\label{eqn:tr_con}
\Ccal_{\Delta_k} \coloneqq \curlyb{\mubold \in \Ccal \suchthat \norm{\mubold-\mubold_k}\leq \Delta_k}.
\end{equation}

To ensure the proposed trust-region method is globally convergent, we require the objective models $F$ and $m_k$ to satisfy the following assumptions:
\begin{assume}\label{assum:at:tr_inexact}
    Assumptions on the trust-region method with an inexact gradient condition
    \begin{enumerate}[label=\textbf{(AT\arabic*)}]
        \item \label{assum:at:f} $F$ is twice continuously differentiable and bounded below
        \item \label{assum:at:m} $m_k$ is twice continuously differentiable for $k=1,2,\ldots.$
        \item \label{assum:at:hession}There exists $\zeta_1 >0, \zeta_2>1$ 
        such that $\norm{\nabla^2 F(\mubold)} \leq \zeta_1$ and $\norm{\nabla^2 m_k(\mubold)} \leq \zeta_2 - 1$
        \item \label{assum:at:grad} There exists $\xi >0 $ such that
        \begin{equation} \label{eqn:tr:asym_err}
            \norm{\nabla m_k (\mubold_k)-\nabla F(\mubold_k)} \leq \xi \min\curlyb{\norm{\chi_{m}(\mubold_k)}, \Delta_k},
        \end{equation}
        where $\chi_{m}(\mubold)$ is the reduced version of $\chi(\mubold)$, i.e., $\chi_{m}(\mubold)$ is defined as
        \begin{equation}\label{eqn:chi_m}
            \chi_m(\mubold) \coloneqq \Pbm_\Ccal(\mubold-\nabla m_k(\mubold), \mubold_l, \mubold_u) - \mubold
        \end{equation}
        \item \label{assum:at:curvature} There exists a positive constant $\beta$ such that $\beta_k \leq \beta$ for all $k \in \Nbb$, 
        where the upper bound of the curvature, $\beta_k$, is defined as  
        \begin{equation}
        \label{eqn:beta_k}
        \beta_k \coloneqq 1 + \max_{\xbm\in\Ccal_{\Delta_k}} \norm{\nabla^2 m_k(\xbm)}_2       
        \end{equation}
        \item \label{assum:at:cauchy} There exists $\kappa \in (0,1)$ such that solution of the trust-region subproblem satisfies the fraction of Cauchy decrease
        \begin{equation}\label{eqn:cauchy_decrease}
        m_k(\mubold_k) - m_k(\check{\mubold}_k) \geq
        \kappa \norm{\chi_m(\mubold_k)}
        \min\bracket{\frac{\norm{\chi_m(\mubold_k)}}{\beta_k}, \Delta_k},
        \end{equation}
        where $\check{\mubold}_k$ is the candidate step at the $k$th trust-region iteration.
    \end{enumerate}
    \label{trammo:assump}
\end{assume}

We use the trust-region method described in \cite{yano_globally_2021} adapted
to the projected gradient criticality measure \cite{kouri_inexact_2021} and
the asymptotic error condition, \ref{assum:at:grad}, where $\xi > 0$ is
an arbitrary constant independent of $k$. Similar to its original form
in~\cite{kouri_trust-region_2013}, the error bound requires that the model $m_k$
must be asymptotically accurate as $\norm{\chi_{m}(\mubold_k)} \rightarrow 0$ or
$\Delta_k \rightarrow 0 $. However, due to the arbitrariness of $\xi$, it may be
less effective for a fixed $k$. To ensure the error condition is practical, we seek
a \textit{computable} error indicator $\varphi_k: \Dcal \rightarrow \Rbb$ such that
\begin{equation}\label{eqn:tr:grad_cond_varphi}
 \norm{\nabla m_k(\mubold_k) - \nabla F(\mubold_k)} \leq \xi \varphi_k(\mubold_k),
\end{equation}
which reduces the gradient condition to
\begin{equation}\label{eqn:tr:varphi}
\varphi_k(\mubold_k) \leq \kappa_\varphi \min\{\norm{\chi_{m}(\mubold_k)},\Delta_k\},
\end{equation}
where $\kappa_\varphi > 0$ is a chosen constant. This provides a computable criterion in selecting model $m_k$ at the $k\mathrm{th}$ trust-region step. The complete trust-region algorithm is summarized in Algorithm~\ref{alg:tr_inexact}.
\begin{algorithm}[H]
    \begin{algorithmic}[1]
        \REQUIRE Current iterate $\mubold_k$ and radius $\Delta_k$, and parameters $0<\gamma_1\leq\gamma_2<1$, $\Delta_\mathrm{max}>0$, $0< \eta_1<\eta_2<1$ \\
        \ENSURE Next iterate $\mubold_{k+1}$
        \STATE 
            {\bf Model update:} Construct the approximation model, $m_{k}(\mubold)$, that satisfies \ref{assum:at:m} and \ref{assum:at:grad}.
        \STATE 
            {\bf Step computation:} Solve the trust-region subproblem to get the candidate center $\check{\mubold}_{k}$
            \begin{equation*}
             \optunc{\mubold \in \Ccal_{\Delta_k}}{m_k(\mubold)},
            \end{equation*}
            such that $\check{\mubold}_k$ satisfies the fraction of \textit{Cauchy decrease condition} in (\ref{eqn:cauchy_decrease}).
        \STATE
            {\bf Actual-to-predicted reduction ratio:} Evaluate the actual-to-predicted reduction ratio
            \begin{equation}\label{eqn:tr:redu_ratio}
                \varrho_{k}=\frac{F(\mubold_{k})-F(\check{\mubold}_{k})}{m_k(\mubold_{k})-m_k(\check{\mubold}_{k})}
            \end{equation}

        \STATE {\bf Step acceptance}: \\
            \begin{tabbing}
                \qquad \qquad \qquad 
                \=\textbf{if} \qquad \=$\varrho_{k} \geq \eta_1$  \qquad \= \textbf{then}  \qquad \=$\mubold_{k+1}=\check\mubold_k$ \qquad \= \textbf{else} \qquad \=$\mubold_{k+1}=\mubold_k$ \qquad \= \textbf{end if}
            \end{tabbing}
            
        \STATE
            {\bf Trust-region radius update:} \\
            \begin{tabbing}
                \qquad \qquad \qquad 
                \=\textbf{if} \qquad \=$\varrho_{k} < \eta_1$ \qquad \qquad \= \textbf{then}  \qquad \=$\Delta_{k+1} \in [\gamma_1 \Delta_k, \gamma_2 \Delta_k]$ \qquad \= \textbf{end if} \\

                \>\textbf{if}  \>$\varrho_{k} \in [\eta_1, \eta_2)$ \>\textbf{then} \>$\Delta_{k+1} \in [\gamma_2 \Delta_k, \Delta_k]$ \>\textbf{end if}\\

                \>\textbf{if}  \>$\varrho_{k} \geq \eta_2$ \>\textbf{then} \>$\Delta_{k+1} \in [\Delta_k, \Delta_{\text{max}}]$ \>\textbf{end if} 
            \end{tabbing}
\end{algorithmic} 
\caption{Trust-region method with inexact gradient condition}
\label{alg:tr_inexact}
\end{algorithm}

Theorem~\ref{thm:xi_m-zero} establishes the global convergence for the trust-region method described in Algorithm~\ref{alg:tr_inexact}. Similar proofs can be found in \cite{toint_global_1988,kouri_trust-region_2013,yano_globally_2021} for different accuracy assumptions.
\begin{theorem}\label{thm:xi_m-zero}
Suppose Assumptions~\ref{assum:at:tr_inexact} hold. Then
\begin{equation}
    \liminf_{k \rightarrow \infty} \norm{\chi_m(\mubold_k)} = \liminf_{k \rightarrow \infty} \norm{\chi(\mubold_k)} = 0
\end{equation}
\end{theorem}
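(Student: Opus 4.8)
The plan is to argue by contradiction along the classical trust-region template (cf. \cite{conn_trust-region_2000,kouri_trust-region_2013,yano_globally_2021}), adapted to the projected-gradient criticality measure and the inexact gradient condition \ref{assum:at:grad}. First I would dispense with the equality of the two limits inferior. Since the Euclidean projection onto the box $\Ccal$ is nonexpansive, applying it to the points $\mubold_k-\nabla F(\mubold_k)$ and $\mubold_k-\nabla m_k(\mubold_k)$ and using \ref{assum:at:grad} gives
\[
 \abs{\norm{\chi(\mubold_k)} - \norm{\chi_m(\mubold_k)}}
 \leq \norm{\chi(\mubold_k)-\chi_m(\mubold_k)}
 \leq \norm{\nabla F(\mubold_k)-\nabla m_k(\mubold_k)}
 \leq \xi\min\curlyb{\norm{\chi_m(\mubold_k)},\Delta_k}.
\]
In particular $\norm{\chi(\mubold_k)}\leq(1+\xi)\norm{\chi_m(\mubold_k)}$, so it suffices to prove $\liminf_k\norm{\chi_m(\mubold_k)}=0$; the statement for $\norm{\chi(\mubold_k)}$ then follows along the subsequence realizing this limit inferior.

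Suppose, for contradiction, that $\liminf_k\norm{\chi_m(\mubold_k)}>0$, so there exist $\epsilon>0$ and $K$ with $\norm{\chi_m(\mubold_k)}\geq\epsilon$ for all $k\geq K$. The crucial estimate is a bound on the actual-to-predicted reduction discrepancy. Expanding $F$ and $m_k$ to second order about $\mubold_k$ along $\check\mubold_k-\mubold_k$ and subtracting, the quantity $\abs{(F(\mubold_k)-F(\check\mubold_k))-(m_k(\mubold_k)-m_k(\check\mubold_k))}$ splits into a gradient-mismatch term, bounded via \ref{assum:at:grad} and $\norm{\check\mubold_k-\mubold_k}\leq\Delta_k$ by $\xi\,\Delta_k\min\curlyb{\norm{\chi_m(\mubold_k)},\Delta_k}\leq\xi\Delta_k^2$, and a Hessian term, bounded via \ref{assum:at:hession} by a constant multiple of $\Delta_k^2$; the whole discrepancy is therefore $O(\Delta_k^2)$. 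Meanwhile the Cauchy decrease \ref{assum:at:cauchy} together with $\norm{\chi_m(\mubold_k)}\geq\epsilon$ and $\beta_k\leq\beta$ (\ref{assum:at:curvature}) yields $m_k(\mubold_k)-m_k(\check\mubold_k)\geq\kappa\epsilon\min\curlyb{\epsilon/\beta,\Delta_k}$, which equals $\kappa\epsilon\Delta_k$ once $\Delta_k\leq\epsilon/\beta$. Hence for $k\geq K$ with $\Delta_k$ small, $\abs{\varrho_k-1}\leq O(\Delta_k^2)/(\kappa\epsilon\Delta_k)=O(\Delta_k)$, so there is a threshold $\bar\Delta>0$ such that $\Delta_k\leq\bar\Delta$ forces $\varrho_k\geq\eta_2$, i.e. the step is accepted and the radius is not decreased.

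This threshold yields a uniform lower bound on the radius: the radius can only shrink on unsuccessful iterations, which by the previous paragraph require $\Delta_k>\bar\Delta$, and each shrink is by at most the factor $\gamma_1$, so $\Delta_k\geq\delta_{\min}\coloneqq\min\curlyb{\Delta_K,\gamma_1\bar\Delta}>0$ for all $k\geq K$. I would then split into two cases. If only finitely many iterations are successful, then past some index every step is rejected and $\Delta_k\to0$, contradicting the lower bound $\delta_{\min}$. Hence infinitely many iterations are successful, and for each such index the acceptance rule and Cauchy decrease give
\[
 F(\mubold_k)-F(\mubold_{k+1})
 \geq \eta_1\bracket{m_k(\mubold_k)-m_k(\check\mubold_k)}
 \geq \eta_1\kappa\epsilon\min\curlyb{\frac{\epsilon}{\beta},\delta_{\min}}>0.
\]
Since $F$ is nonincreasing across all iterations and decreases by this fixed positive amount infinitely often, $F(\mubold_k)\to-\infty$, contradicting the lower-boundedness of $F$ in \ref{assum:at:f}. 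This contradiction establishes $\liminf_k\norm{\chi_m(\mubold_k)}=0$, and with the first paragraph, $\liminf_k\norm{\chi(\mubold_k)}=0$.

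I expect the main obstacle to lie in the bookkeeping of the $O(\Delta_k^2)$ discrepancy bound: one must use the \emph{asymptotic} form of \ref{assum:at:grad} (the minimum with $\Delta_k$) to upgrade the naive $O(\Delta_k)$ gradient-mismatch contribution to $O(\Delta_k^2)$, since it is precisely this quadratic scaling that forces $\varrho_k\to1$ for small $\Delta_k$ and drives the entire contradiction. A gradient error merely bounded by a fixed constant would leave a linear-in-$\Delta_k$ term in the ratio and break the argument.
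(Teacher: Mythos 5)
Your proof is correct, and it rests on the same two key estimates as the paper's — the Cauchy-decrease lower bound $m_k(\mubold_k)-m_k(\check\mubold_k)\geq\kappa\epsilon\min\curlyb{\epsilon/\beta,\Delta_k}$ and the Taylor-based bound $\abs{\mathrm{ared}_k-\mathrm{pred}_k}=O(\Delta_k^2)$, which together give $\abs{\varrho_k-1}=O(\Delta_k)$ — but it closes the contradiction by a genuinely different route. The paper first proves (Lemma~\ref{lemma:delta-zero}) that under the contradiction hypothesis the radii are summable, $\sum_k\Delta_k<\infty$, by telescoping the objective decrease over successful iterations and citing an auxiliary result of Kouri et al.\ for the unsuccessful ones; it then shows $\varrho_k\to1$ (Lemma~\ref{lemma:rho-one}), so the radius is eventually non-decreasing, contradicting $\Delta_k\to0$. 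You instead use the ratio estimate to extract a uniform positive lower bound $\Delta_k\geq\delta_{\min}$ on the radius and derive the contradiction from the lower-boundedness of $F$, since infinitely many successful steps each decrease $F$ by at least $\eta_1\kappa\epsilon\min\curlyb{\epsilon/\beta,\delta_{\min}}>0$. This is the classical Conn--Gould--Toint organization; it buys you a more self-contained argument (the finitely-many-successes case is dispatched directly rather than by citation, and your ratio estimate needs only $\Delta_k\leq\epsilon/\beta$ rather than $\Delta_k\to0$ as an input), whereas the paper's route establishes the stronger intermediate fact that the radii are summable. Your closing remark — that the $\min\curlyb{\norm{\chi_m(\mubold_k)},\Delta_k}$ in \ref{assum:at:grad} is precisely what upgrades the gradient-mismatch contribution from $O(\Delta_k)$ to $O(\Delta_k^2)$ and hence forces $\varrho_k\to1$ — correctly identifies the role of the asymptotic error condition, and your final passage from $\liminf_k\norm{\chi_m(\mubold_k)}=0$ to $\liminf_k\norm{\chi(\mubold_k)}=0$ via nonexpansiveness of $\Pbm_\Ccal$ is identical to the paper's.
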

\begin{proof}
See \ref{sec:tr-proofs}.
\end{proof}

\subsection{Augmented Lagrangian framework}\label{sec:trammo:auglag}
Next, we introduce the augmented Lagrangian framework of \cite{nocedal_numerical_2006} to solve (\ref{eqn:auglag_reduced}), which uses the augmented Lagrangian function (frozen Lagrange multiplier estimates $\thetabold$ and penalty parameter $\tau$) as the objective function $F$ in (\ref{eqn:generic_bndopt}) and the inexact trust-region method in Algorithm~\ref{alg:tr_inexact} to solve the subproblem until $\mubold_i^\star \in \Ccal$ that satisfies
\begin{equation}\label{eqn:proj_optim}
    \norm{\mubold_i^\star - \Pbm_\Ccal(\mubold_i^\star - \nabla f(\mubold_i^\star;\thetabold_i,\tau_i), \mubold_l, \mubold_u)}_\infty \leq \omega_i,
\end{equation}
where $i$ denotes the major (outer) iteration number, and $\omega_i$ is the convergence tolerance for the current major iteration. In this setting, the values of $\thetabold_i$ and $\tau_i$ are fixed at the iteration $i$, i.e., they will not change throughout all trust-region iterations when solving (\ref{eqn:proj_optim}). The augmented Lagrangian framework can become computationally expensive for small $\omega_i$ due to a large number of subproblems iterations. In practice, we limit the maximum number of iterations to 50 to ensure that the subproblem is solved sufficiently while avoiding excessive iterations.

We initialize the Lagrange multipliers to be $\thetabold_0 = \zerobold$ to begin the optimization process and use an iterative method derived from first-order optimality to update Lagrangian multipliers as
\begin{equation}\label{eqn:tr:lag_multp_update}
    \thetabold_{i+1} = \thetabold_{i} - \tau \cbm(\mubold^\star_i).
\end{equation}
When the current iterate $\mubold_i^\star$ is located in the infeasible region, we enlarge the penalty parameter by a scaling factor $a > 1$ to force the optimization trajectory toward the feasible region. The complete algorithm is summarized in Algorithm~\ref{alg:auglag}.
\begin{algorithm}[H]
    \begin{algorithmic}[1]
        \REQUIRE Initial iterate $\mubold_0$, Lagrangian multipliers $\thetabold_0$, penalty parameter $\tau_0$, scaling factor $a$, constraint violation tolerances $\pi_0$ and $\pi_\star$, and optimality tolerances $\omega_0$ and $\omega_\star$
        \ENSURE Next iterate $\mubold_{i+1}$
        \STATE {\bf Step computation:} Approximately solve for an iterate, $\mubold^\star_i$, using a bound-constrained optimization solver (e.g., Algorithm~\ref{alg:tr_inexact}) such that (\ref{eqn:proj_optim}) is satisfied.
        \STATE {\bf Termination criteria:} Terminate if $\mubold_i^\star$ satisfies optimality criteria
\begin{equation*}
\norm{\mubold_i^\star - \Pbm_\Ccal(\mubold_i^\star - \nabla f(\mubold_i^\star;\thetabold_i,\tau_i), \mubold_l, \mubold_u)}_\infty \leq \omega_\star, \qquad
\norm{\cbm(\ubm^\star(\mubold^\star_i), \mubold^\star_i)} \leq \pi_\star.
\end{equation*}
        \STATE {\bf Step is feasible, i.e., $\norm{\cbm(\ubm^\star(\mubold^\star_i), \mubold^\star_i)} \leq \pi_i$:} Update Lagrangian multipliers
            \begin{equation}
                    \thetabold_{i+1} = \thetabold_{i} - \tau \cbm(\ubm^\star(\mubold^\star_i), \mubold^\star_i), \quad
                    \tau_{i+1} = \tau_i, \quad
                    \pi_{i+1} = \pi_i / \tau^{0.9}_{i+1}, \quad
                    \omega_{i+1} = \omega_i/\tau_{i+1}
            \end{equation}
        \STATE {\bf Step is infeasible, i.e., $\norm{\cbm(\ubm^\star(\mubold^\star_i), \mubold^\star_i)} > \pi_i$:} Update penalty parameter
        \begin{equation}
                \thetabold_{i+1} = \thetabold_{i}, \quad
                \tau_{i+1} = a\tau_i, \quad
                \pi_{i+1} = 1/\tau^{0.1}_{i+1}, \quad
                \omega_{i+1} = 1/\tau_{i+1}
        \end{equation}
        \STATE {\bf $i = i + 1$, return to Step 1}
\end{algorithmic}
\caption{The augmented Lagrangian framework}
\label{alg:auglag}
\end{algorithm}
In this work, we take $\pi_0 = 1 / \tau_0$ and $\omega_0 = 1 / \tau_0^{0.1}$ \cite{nocedal_numerical_2006}, and study the impact of $\tau_0$ and $a$ (Section~\ref{sec:numexp}).

\subsection{Accelerated bound-constrained optimization using trust regions and on-the-fly model hyperreduction}
\label{sec:eqp_trammo}
Finally, we introduce the proposed EQP/BTR method, as an extension of the trust-region approach in Section~\ref{sec:trammo:tr_inexact}, to solve the bound-constrained subproblem in Step 2 of Algorithm~\ref{alg:auglag} by merging the developments of Sections~\ref{sec:hyperreduction}~and~\ref{sec:trammo}, namely, the hyperreduced models and the globally convergent trust-region method that relies on asymptotic error bounds.
In the remainder of this section, we freeze Algorithm~\ref{alg:auglag} at the $i$th major iteration and drop all major iteration indices. Subscripts will denote the trust-region step of major iteration $i$, i.e., $\Abm_k$ denotes the value of the quantity $\Abm$ at the $kth$ trust-region step of the $i$th major iteration in Algorithm~\ref{alg:auglag}.

Now, we take the trust-region model ($m_k$) at the $k$th trust-region center $\mubold_k$ as the quadratic approximation to the hyperreduced AL function
\begin{equation}\label{eqn:tr:mk}
 m_k(\mubold;\thetabold,\tau) = \tilde{f}_{\Phibold_k}(\mubold_k; \rhobold_k, \thetabold,\tau) + \nabla\tilde{f}_{\Phibold_k}(\mubold_k; \rhobold_k,\thetabold,\tau) + \frac{1}{2} (\mubold-\mubold_k)^T\tilde\Hbm_k(\mubold-\mubold_k),
\end{equation}
where $\Phibold_k\in\Rbb^{N_\ubm \times n_k}$ is the reduced basis, $\rhobold_k\in\Ccal_{\Phibold_k,\Xibold_k,\deltabold_k}$ is the weight
vector, and $\tilde\Hbm_k\in\Rbb^{N_\mubold\times N_\mubold}$ is the Hessian of $\tilde{f}_{\Phibold_k}$ defined by
\begin{equation}
 \tilde\Hbm_k \coloneqq \nabla^2\tilde{f}_{\Phibold_k}(\mubold_k; \rhobold_k,\thetabold,\tau),
\end{equation}
all at the $k$th trust-region center. Hessian-vector products are approximated using a first-order finite difference approach given by
\begin{equation}
 \tilde\Hbm_k \vbm \approx \frac{1}{\varepsilon}\bracket{\nabla\tilde{f}_{\Phibold_k}(\mubold_k+\varepsilon \vbm; \rhobold_k,\thetabold,\tau) - \nabla\tilde{f}_{\Phibold_k}(\mubold_k; \rhobold_k,\thetabold,\tau) },
\end{equation}
where $\varepsilon\in\Rbb_{>0}$ is the finite difference step size ($\varepsilon = 10^{-6}$ in this work). As we assume that the size of the reduced basis $n_k$ is small and the EQP weight vector $\rhobold_k$ is sparse, the evaluations of the model $m_k$ and its gradient $\nabla m_k$ are much cheaper than the HDM model evaluations $f$.

We follow the same procedure as in \cite{wen_globally_2023} to construct the reduced basis that includes the HDM sensitivities at the starting point. The reduced basis is chosen to  as
\begin{equation}\label{eqn:reduc_basis}
 \Phibold_k = \mathtt{GramSchmidt}\left(\begin{bmatrix} \ubm^\star(\mubold_k) & \lambdabold^\star(\mubold_k;\thetabold,\tau) & \partial_\mubold\ubm^\star(\mubold_0) & \Phibold_k^\mathtt{p} & \Phibold_k^\mathtt{a} \end{bmatrix}\right),
\end{equation}
to guarantee $\ubm^\star(\mubold_k), \lambdabold^\star(\mubold_k;\thetabold,\tau) \in\mathrm{Ran}~\Phibold_k$, where
\begin{equation}\label{eqn:tr:pod_snapshots}
 \Phibold_k^\mathtt{p} \coloneqq \mathtt{POD}_{p_k}(\Ubm_{k-1})\in\Rbb^{N_\ubm\times p_k}, \qquad
 \Phibold_k^\mathtt{a} \coloneqq \mathtt{POD}_{q_k}(\Vbm_{k-1})\in\Rbb^{N_\ubm\times q_k}
\end{equation}
are optimal compressions of state and adjoint snapshots from \textit{all previous iterations}
\begin{equation}\label{eqn:tr:snapshots}
\begin{aligned}
 \Ubm_k &= \begin{bmatrix} \ubm^\star(\mubold_0) & \cdots & \ubm^\star(\mubold_k)\end{bmatrix} \in \Rbb^{N_\ubm\times(k+1)}, \\
 \Vbm_k &= \begin{bmatrix} \lambdabold^\star(\mubold_0;\thetabold,\tau) & \cdots & \lambdabold^\star(\mubold_k;\thetabold,\tau) \end{bmatrix}  \in \Rbb^{N_\ubm\times(k+1)}.
\end{aligned}
\end{equation}
This construction of the reduced basis leads to a basis of size $n_k = 2+N_\mubold+p_k+q_k$ (bounded by $n_k \leq 2(k+1)+N_\mubold$), and $0\leq p_k\leq k$ and $0\leq q_k\leq k$
are user-defined parameters. Fast low-rank singular value decomposition updates \cite{brand_fast_2006} can be used to construct $\Phibold_k^\mathtt{p}$ and $\Phibold_k^\mathtt{a}$ efficiently.

Next, we choose the constraint set $\Ccal_{\Phibold_k,\Xibold_k,\deltabold_k}$ such that
\begin{equation}\label{eqn:tr:conv_constr_set}
\Ccal_{\Phibold_k,\Xibold_k,\deltabold_k} \subseteq
\Ccal_{\Phibold_k,\Xibold_k,\delta_{\mathtt{rp},k}}^\mathtt{rp} \cap \Ccal_{\Phibold_k,\Xibold_k,\delta_{\mathtt{lra},k}}^\mathtt{lra} \cap \Ccal_{\Phibold_k,\Xibold_k,\delta_{\mathtt{lga},k}}^\mathtt{lga}  \cap \Ccal_{\Phibold_k,\Xibold_k,\delta_{\mathtt{dcy},k}}^\mathtt{dcy} \cap \Ccal_{\Phibold_k,\Xibold_k,\delta_{\mathtt{dc\mu},k}}^\mathtt{dc\mu} \cap \Ccal_{\Phibold_k,\Xibold_k,\delta_{\mathtt{c},k}}^\mathtt{c}
\end{equation}
and the EQP training set $\Xibold_k\subset\Dcal$ such that $\mubold_k \in \Xibold_k$ to
ensure Corollary~\ref{cor:qoi_grad_errbnd} holds, which leads to the result
\begin{equation}
\begin{aligned}
\norm{\nabla f(\mubold_k;\thetabold, \tau) - \nabla m_k(\mubold_k;\thetabold,\tau)}
&=\norm{\nabla f(\mubold_k;\thetabold,\tau) - \nabla \tilde{f}_{\Phibold_k}(\mubold_k;\rhobold_k,\thetabold,\tau)}\\
&\leq c_3'\delta_{\mathtt{rp},k}+c_4'\delta_{\mathtt{lra},k}+\delta_{\mathtt{lga},k}+
c_5'\tau\delta_{\mathtt{c},k}+c_6'\tau\delta_{\mathtt{dcy},k}+c_7'\tau\delta_{\mathtt{dc\mu},k}
\end{aligned}
\end{equation}
Therefore, we take $\varphi_k$ in (\ref{eqn:tr:varphi}) to be
\begin{equation}\label{eqn:varphi_def}
 \varphi_k \coloneqq \kappa_1 \delta_{\mathtt{rp},k} + \kappa_2 \delta_{\mathtt{lra},k} + \kappa_3 \delta_{\mathtt{lga},k} + \kappa_4 \tau \delta_{\mathtt{c},k} + \kappa_5 \tau \delta_{\mathtt{dcy},k} + \kappa_6 \tau \delta_{\mathtt{dc\mu},k},
\end{equation}
where $\kappa_1,\kappa_2,\dots,\kappa_6>0$ are user-defined parameters. Note that (\ref{eqn:varphi_def}) differs from the similar definition in~\cite{wen_globally_2023} where we keep the penalty parameter, $\tau$, to enforce the accuracy on the penalty term in the case that $\tau$ approaches a large number.
Then, condition (\ref{eqn:tr:varphi}) leads to the following bound on the tolerances
\begin{equation}
\kappa_1 \delta_{\mathtt{rp},k} + \kappa_2 \delta_{\mathtt{lra},k} + \kappa_3 \delta_{\mathtt{lga},k} + \kappa_4 \tau \delta_{\mathtt{c},k}+ \kappa_5 \tau \delta_{\mathtt{dcy},k} + \kappa_6 \tau \delta_{\mathtt{dc\mu},k}  \leq \hat\kappa~\mathrm{min}\left\{\norm{\chi_{m}(\mubold_k; \thetabold, \tau)},\Delta_k\right\},
\end{equation}
where $\chi_m(\mubold; \thetabold, \tau) = \Pbm_\Ccal(\mubold - \nabla m_k(\mubold;\thetabold,\tau),\mubold_l,\mubold_u)-\mubold$.
For simplicity, we impose the slightly stronger condition that equally splits the bound among the six tolerances as
\begin{equation}\label{eqn:eqp_tol_cond}
\begin{aligned}
 \delta_{\mathtt{rp},k} &\leq \frac{\hat\kappa}{6\kappa_1}\mathrm{min}\left\{\norm{\chi_{m}(\mubold_k;\thetabold,\tau)},\Delta_k\right\}, \\
 \delta_{\mathtt{lra},k} &\leq \frac{\hat\kappa}{6\kappa_2}\mathrm{min}\left\{\norm{\chi_{m}(\mubold_k; \thetabold,\tau)},\Delta_k\right\}, \\
 \delta_{\mathtt{lga},k} &\leq \frac{\hat\kappa}{6\kappa_3}\mathrm{min}\left\{\norm{\chi_{m}(\mubold_k;\thetabold,\tau)},\Delta_k\right\}, \\
 \delta_{\mathtt{c},k} &\leq \frac{\hat\kappa}{6\tau\kappa_4}\mathrm{min}\left\{\norm{\chi_{m}(\mubold_k;\thetabold,\tau)},\Delta_k\right\}, \\
 \delta_{\mathtt{dcy},k} &\leq \frac{\hat\kappa}{6\tau\kappa_5}\mathrm{min}\left\{\norm{\chi_{m}(\mubold_k;\thetabold,\tau)},\Delta_k\right\}, \\
 \delta_{\mathtt{dc\mu},k} &\leq \frac{\hat\kappa}{6\tau\kappa_6}\mathrm{min}\left\{\norm{\chi_{m}(\mubold_k;\thetabold,\tau)},\Delta_k\right\}.
\end{aligned}
\end{equation}
The weighting of the tolerances can be achieved through the choice of $\kappa_1,\kappa_2,\dots,\kappa_6$, although we take $\kappa_1=\kappa_2=\dots=\kappa_6$ in this work. As shown in (\ref{eqn:eqp_tol_cond}), the penalty parameter $\tau$ is accountd for in the tolerances related to the constraint set $\cbm$, which enforces the accuracy on the constraint terms when $\tau$ increase. Finally, with these choices, we chose the weight vector $\rhobold_k$
to the solution of (\ref{eqn:linprog}), i.e.,
\begin{equation}\label{eqn:tr:compute_eqp_weights}
\rhobold_k = \rhobold^\star(\Phibold_k, \Xibold_k,\deltabold_k; \thetabold, \tau),
\end{equation}
where the tolerances
\begin{equation}\label{eqn:tr:all_constr}
 \deltabold_k = (\delta_\mathtt{dv},\delta_{\mathtt{rp},k},\delta_{\mathtt{lra},k},\delta_{\mathtt{lga},k}, \delta_{\mathtt{c},k}, \delta_{\mathtt{dcy},k}, \delta_{\mathtt{dc\mu},k}, \delta_\mathtt{lq}, \delta_\mathtt{rs})
\end{equation}
are chosen according to (\ref{eqn:eqp_tol_cond}) with arbitrary $\delta_\mathtt{dv},\delta_\mathtt{lq},\delta_\mathtt{rs} > 0$.

The trust-region subproblem does not need to be solved exactly; rather, it must only satisfy the fraction of the Cauchy decrease condition. We assume the fraction of Cauchy decrease, \ref{assum:at:cauchy}, based on the flexibility of the choice of the first-order optimality condition in \cite{conn_trust-region_2000}. For bound-constrained problems, an $l^\infty$ norm trust-region constraint is natural because it is also a bound constraint. To approximately solve $l^\infty$-norm trust-region subproblems, we use the algorithm in~\cite{conn_testing_1988}: (1) compute the generalized Cauchy point, $\mubold^c$, along the projected path and (2) use an active-set truncated conjugate gradient (TCG) method is used to solve reduced quadratic program. 
The step candidate is evaluated using the actual-to-predicted reduction ratio, $\varrho_k$ in (\ref{eqn:tr:redu_ratio}), to determine whether to accept the step and how to adjust the trust-region radius.

The complete EQP/BTR algorithm is summarized in Algorithm~\ref{alg:tr_hyp}. The specific construction of the reduced basis $\Phibold_k$ and weight vector $\rhobold_k$ outlined above are sufficient to ensure the complete algorithm is globally convergent.
\begin{algorithm}[H]
    \begin{algorithmic}[1]
        \REQUIRE Current iterate $\mubold_k$ and radius $\Delta_k$, trust-region parameters $0<\gamma_1\leq\gamma_2<1, \Delta_\mathrm{max}>0, 0<\eta_1<\eta_2<1$, snapshot matrices $\Ubm_k$ and $\Vbm_k$, frozen Lagrange multiplier estimate $\thetabold$ and penalty parameter $\tau$
       \ENSURE Next iterate $\mubold_{k+1}$, updated snapshot matrices $\Ubm_{k+1}$ and $\Vbm_{k+1}$
        \STATE {\bf Model update:} Build approximation model $m_k(\mubold;\thetabold,\tau)$ in (\ref{eqn:tr:mk})
        \begin{itemize}[topsep=0pt, itemsep=0pt]
          \setlength\itemsep{0pt}
          \item Solve primal and adjoint HDM: $\ubm^\star(\mubold_k)$, $\lambdabold^\star(\mubold_k; \thetabold, \tau)$
          \item Construct reduced basis $\Phibold_k$ according to (\ref{eqn:reduc_basis})
          \item Compute EQP weights $\rhobold_k$ according to (\ref{eqn:tr:compute_eqp_weights}) with tolerances given by (\ref{eqn:eqp_tol_cond}) and (\ref{eqn:tr:all_constr})
          \item Update snapshot matrices
          \begin{equation}
           \Ubm_{k+1} = \begin{bmatrix} \ubm^\star(\mubold_k) & \Ubm_k \end{bmatrix}, \qquad
           \Vbm_{k+1} = \begin{bmatrix} \lambdabold^\star(\mubold_k; \thetabold, \tau) & \Vbm_k \end{bmatrix}
          \end{equation}
        \end{itemize}
        \STATE {\bf Step computation:} Compute the generalized Cauchy point, $\mubold^c$ and solve the active-set trust-region subproblem to get the candidate center, $\check\mubold_k$
        \STATE {\bf Actual-to-predicted reduction ratio:} Compute the actual-to-predicted reduction ratio $\varrho_k$
            \begin{equation}
                \varrho_{k}=\frac{f(\mubold_{k};\thetabold,\tau)-f(\check{\mubold}_{k};\thetabold,\tau)}{m_k(\mubold_{k};\thetabold,\tau)-m_k(\check{\mubold}_{k};\thetabold,\tau)}
            \end{equation}
        \STATE {\bf Step acceptance}: \\
            \begin{tabbing}
                \qquad \qquad \qquad 
                \=\textbf{if} \qquad \=$\varrho_{k} \geq \eta_1$  \qquad \= \textbf{then}  \qquad \=$\mubold_{k+1}=\check\mubold_k$ \qquad \= \textbf{else} \qquad \=$\mubold_{k+1}=\mubold_k$ \qquad \= \textbf{end if}
            \end{tabbing}
            
        \STATE
            {\bf Trust-region radius update:} \\
            \begin{tabbing}
                \qquad \qquad \qquad 
                \=\textbf{if} \qquad \=$\varrho_{k} < \eta_1$ \qquad \qquad \= \textbf{then}  \qquad \=$\Delta_{k+1} \in [\gamma_1 \Delta_k, \gamma_2 \Delta_k]$ \qquad \= \textbf{end if} \\

                \>\textbf{if}  \>$\varrho_{k} \in [\eta_1, \eta_2)$ \>\textbf{then} \>$\Delta_{k+1} \in [\gamma_2 \Delta_k, \Delta_k]$ \>\textbf{end if}\\

                \>\textbf{if}  \>$\varrho_{k} \geq \eta_2$ \>\textbf{then} \>$\Delta_{k+1} \in [\Delta_k, \Delta_{\text{max}}]$ \>\textbf{end if} 
            \end{tabbing}
\end{algorithmic}
\caption{Trust-region method with hyperreduced approximation models}
\label{alg:tr_hyp}
\end{algorithm}

\begin{theorem} \label{thm:globconv}
Suppose Assumptions~\ref{assum:at:f}-\ref{assum:at:hession},~\ref{assum:at:curvature},~and~\ref{assum:hdm}-\ref{assum:eqp} hold
with $\Rcal\supset\cup_{k=1}^\infty\Ccal_{\Phibold_k,\Xibold_k,\deltabold_k}$. The sequence of trust-region centers $\{\mubold_k\}$ generated by Algorithm~\ref{alg:tr_hyp} satisfy
\begin{equation}\label{eqn:tr:inf_mk}
 \liminf_{k\rightarrow\infty}\,\norm{\chi_m(\mubold_k;\thetabold,\tau)} = \liminf_{k\rightarrow\infty}\,\norm{\chi(\mubold_k;\thetabold,\tau)} = 0
\end{equation}
independent of the choice of $p_k$ and $q_k$.
\begin{proof}
From Theorem~\ref{thm:globconv}, the result holds if (\ref{eqn:tr:asym_err}) and (\ref{eqn:cauchy_decrease}) hold.  With the choice of $\Phibold_k$ in (\ref{eqn:reduc_basis}), we have $\ubm^\star(\mubold_k),\lambdabold^\star(\mubold_k)\in\mathrm{Ran}~\Phibold_k$ independent of the choice of $p_k$ or $q_k$. Furthermore, from the choice of $\Xibold_k$ such that $\mubold_k\in\Xibold_k$ (e.g., $\Xibold_k=\{\mubold_k\}$)) and the constraint set (\ref{eqn:tr:conv_constr_set}), the assumptions of Corollary~\ref{cor:qoi_grad_errbnd} are satisfied, which implies the existence of constants $c_3',c_4', c_5', c_6', c_7'>0$ such that
\begin{equation}
\begin{aligned}
 \norm{\nabla f(\mubold_k;\thetabold,\tau) - \nabla m_k(\mubold_k;\thetabold,\tau)}
 &= \norm{\nabla f(\mubold_k;\thetabold,\tau) - \nabla \tilde{f}_\Phibold(\mubold_k;\rhobold_k,\thetabold,\tau)}\\
 &\leq c_3'\delta_{\mathtt{rp},k} + c_4'\delta_{\mathtt{lra},k} + \delta_{\mathtt{lga},k}  + c_5' \tau \delta_{\mathtt{c},k} + c_6' \tau \delta_{\mathtt{dcy},k} + c_7' \tau \delta_{\mathtt{dc\mu},k}.
\end{aligned}
\end{equation}
From the condition in (\ref{eqn:eqp_tol_cond}), this reduces to
\begin{equation}
\norm{\nabla f(\mubold_k;\thetabold,\tau) - \nabla m_k(\mubold_k;\thetabold,\tau)} \leq \hat\kappa \paren{\frac{c_3'}{6\kappa_1} + \frac{c_4'}{6\kappa_2} + \frac{1}{6\kappa_3} + \frac{c_5'}{6\kappa_4} + \frac{c_6'}{6\kappa_5} + \frac{c_7'}{6\kappa_6}} \mathrm{min}\left\{\norm{\chi_m(\mubold_k;\thetabold,\tau)},\Delta_k\right\},
\end{equation} 
which is identical to (\ref{eqn:tr:asym_err}) with $\xi = \hat\kappa \paren{\frac{c_3'}{6\kappa_1} + \frac{c_4'}{6\kappa_2} + \frac{1}{6\kappa_3} + \frac{c_5'}{6\kappa_4} + \frac{c_6'}{6\kappa_5} + \frac{c_7'}{6\kappa_6}}$. Furthermore, the TCG trust-region subproblem solver \cite{conn_testing_1988} delivers a candidate step satisfying (\ref{eqn:cauchy_decrease}), which leads to the desired result.
\end{proof} 
\end{theorem}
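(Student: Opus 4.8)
The plan is to recognize Algorithm~\ref{alg:tr_hyp} as a concrete instance of the abstract inexact trust-region scheme of Algorithm~\ref{alg:tr_inexact} applied to $F = f(\cdot;\thetabold,\tau)$ with the quadratic model sequence $\{m_k\}$ from (\ref{eqn:tr:mk}), and then to invoke the already-established global convergence result, Theorem~\ref{thm:xi_m-zero}. Concretely, it suffices to show that the choices of reduced basis $\Phibold_k$, weight vector $\rhobold_k$, and trust-region subproblem solver specified in Section~\ref{sec:eqp_trammo} force Assumptions~\ref{assum:at:tr_inexact} to hold; the conclusion (\ref{eqn:tr:inf_mk}) then follows directly from Theorem~\ref{thm:xi_m-zero}.

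Most of the assumptions I expect to be immediate from the regularity hypotheses. \ref{assum:at:f} follows from the continuous differentiability of the implicit map $\mubold\mapsto\ubm^\star(\mubold)$ under Assumptions~\ref{assum:hdm}-\ref{assum:eqp} together with continuity of $f$ on the bounded set $\Ccal$. Since $m_k$ in (\ref{eqn:tr:mk}) is a fixed quadratic in $\mubold$, it is trivially twice continuously differentiable (\ref{assum:at:m}) with constant Hessian $\tilde\Hbm_k$, whose uniform boundedness across $k$ (yielding \ref{assum:at:hession} and \ref{assum:at:curvature}) is inherited from the uniform regularity of the hyperreduced model. The fraction-of-Cauchy-decrease requirement \ref{assum:at:cauchy} is supplied directly by the $\ell^\infty$ trust-region solver of \cite{conn_testing_1988}, which computes the generalized Cauchy point and then refines it with active-set truncated conjugate gradients, a combination guaranteed to satisfy (\ref{eqn:cauchy_decrease}).

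The crux — and the step I expect to be the main obstacle — is verifying the asymptotic gradient error condition \ref{assum:at:grad}, i.e.\ (\ref{eqn:tr:asym_err}). The argument proceeds in three moves. First, the basis construction (\ref{eqn:reduc_basis}) is designed so that $\ubm^\star(\mubold_k),\lambdabold^\star(\mubold_k;\thetabold,\tau)\in\mathrm{Ran}~\Phibold_k$ regardless of the POD truncation ranks $p_k,q_k$; this makes the HDM primal- and adjoint-residual terms in the general bound of Theorem~\ref{the:qoi_grad_errbnd} vanish, leaving only the reduced-versus-hyperreduced discrepancies. Second, the choice of constraint set (\ref{eqn:tr:conv_constr_set}) together with a training set containing $\mubold_k$ places us in the hypotheses of Corollary~\ref{cor:qoi_grad_errbnd}, which bounds $\norm{\nabla f(\mubold_k;\thetabold,\tau)-\nabla m_k(\mubold_k;\thetabold,\tau)}$ by the weighted sum $c_3'\delta_{\mathtt{rp},k}+c_4'\delta_{\mathtt{lra},k}+\delta_{\mathtt{lga},k}+c_5'\tau\delta_{\mathtt{c},k}+c_6'\tau\delta_{\mathtt{dcy},k}+c_7'\tau\delta_{\mathtt{dc\mu},k}$. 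Third, the on-the-fly tolerance rule (\ref{eqn:eqp_tol_cond}) makes each tolerance proportional to $\min\{\norm{\chi_m(\mubold_k;\thetabold,\tau)},\Delta_k\}$, so the weighted sum collapses to $\xi\,\min\{\norm{\chi_m(\mubold_k;\thetabold,\tau)},\Delta_k\}$ with the explicit constant $\xi=\hat\kappa(c_3'/6\kappa_1+\cdots+c_7'/6\kappa_6)$, which is precisely (\ref{eqn:tr:asym_err}).

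The delicate point throughout is that the gradient error must \emph{scale} with the criticality measure and radius, not merely be small: a fixed EQP tolerance would not yield an asymptotic bound, and this is exactly what the adaptive rule (\ref{eqn:eqp_tol_cond}) repairs. A secondary subtlety I will track carefully is the explicit appearance of the penalty parameter $\tau$ multiplying the constraint-related tolerances in both Corollary~\ref{cor:qoi_grad_errbnd} and (\ref{eqn:eqp_tol_cond}); retaining $\tau$ there is what keeps $\xi$ bounded for the frozen major iteration. With \ref{assum:at:grad} and \ref{assum:at:cauchy} established and the remaining assumptions verified above, Theorem~\ref{thm:xi_m-zero} delivers (\ref{eqn:tr:inf_mk}), and the independence from $p_k,q_k$ is inherited from the basis construction, which enforces the range condition irrespective of the truncation ranks.
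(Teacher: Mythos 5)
Your proposal follows essentially the same route as the paper's proof: verify the inexact-gradient condition \ref{assum:at:grad} by combining the range condition enforced by the basis construction (\ref{eqn:reduc_basis}), the error bound of Corollary~\ref{cor:qoi_grad_errbnd}, and the adaptive tolerance rule (\ref{eqn:eqp_tol_cond}) to obtain $\xi = \hat\kappa\paren{c_3'/6\kappa_1+\cdots+c_7'/6\kappa_6}$, take the Cauchy decrease from the TCG solver, and then invoke Theorem~\ref{thm:xi_m-zero}. The only difference is that you also sketch justifications for \ref{assum:at:f}--\ref{assum:at:hession} and \ref{assum:at:curvature}, which the theorem simply takes as hypotheses, so that extra work is harmless but not needed.
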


\begin{remark}
If there are no bound constraints present in (\ref{eqn:hdm_full_space}), i.e., the problem only possess equality constraints, the inexact trust-region framework for unconstrained problems detailed in~\cite{wen_globally_2023} can be used to solve (\ref{eqn:tr:subprob}).
\end{remark}

\begin{remark}
If the optimization problem (\ref{eqn:hdm_full_space}) without constraints $\cbm$ is equivalent to setting $\thetabold = \zerobold$ and $\tau = 0$ in the AL framework, which makes the outer loop (major iterations) unnecessary. The approach introduced in \cite{wen_globally_2023} is sufficient for such problems.
\end{remark}

\begin{remark}
We choose the quadratic model in (\ref{eqn:tr:mk}) rather than the more obvious choice of directly using the hyperreduced model,
i.e., $m_k(\mubold;\thetabold,\tau) = \tilde{f}_{\Phibold_k}(\mubold; \rhobold_k,\thetabold,\tau)$, as done in other work \cite{zahr_adaptive_2016,qian_certified_2017,yano_globally_2021} because the trust-region subproblems are less expensive to solve. The reader is referred to \cite{wen_globally_2023} for a detailed discussion.
\end{remark}

\begin{remark}
In this work, we take $\Xibold_k = \{\mubold_k\}$ since it is the simplest option that satisfies the requirement on the EQP training set $\Xibold_k$ although other options possible \cite{wen_globally_2023}.
\end{remark}

\begin{remark}
We lag the right-hand side conditions in (\ref{eqn:eqp_tol_cond}) to $m_{k-1}(\mubold_{k-1};\thetabold,\tau)$ to simplify the implementation and improve efficiency \cite{wen_globally_2023}.
\end{remark} 

\begin{remark}
In some cases, an affine subspace approximation $\ubm^\star \approx \bar\ubm + \Phibold\hat\ybm^\star$ is preferred to the linear
approximation in (\ref{eqn:rom_ansatz}). We choose the affine offset at the $k$th iteration to be the trust-region center, i.e.,
$\bar\ubm_k = \ubm^\star(\mubold_k)$, which no longer requires $\ubm^\star(\mubold_k)$ to be explicitly included
in the basis construction in (\ref{eqn:reduc_basis}). Lastly, we modify the primal snapshot matrix to be
\begin{equation}
 \Ubm_{k-1} = \begin{bmatrix} \ubm^\star(\mubold_0)-\ubm^\star(\mubold_k) & \cdots & \ubm^\star(\mubold_{k-1})-\ubm^\star(\mubold_k) \end{bmatrix}
\end{equation}
because the reduced basis $\Phibold_k$ now only represents \textit{deviations} of the primal state from the offset (rather than the primal state itself).
\end{remark}

\begin{remark}
    In practice, we reuse $\ubm^\star(\mubold_{j-1}^\star)$ and $\lambdabold^\star(\mubold_{j-1}^\star; \thetabold_{j-1}, \taubold_{j-1})$, i.e., the primal and adjoint solution from the critical point of major iteration $j-1$, to construct the reduced basis in (\ref{eqn:reduc_basis}) at the beginning of $j$th iteration.
\end{remark}

\section{Numerical experiments}
\label{sec:numexp}
In this section, we demonstrate the performance of the proposed method with two aerodynamic shape optimization problems with state-based constraints, e.g., drag, lift. To provide a baseline for comparison, we solve the optimization problem in (\ref{eqn:hdm_full_space}) with only HDM evaluations (no model reduction) with a standard interior-point method. We compare the HDM performance with ROM/BTR and EQP/BTR methods embedded in the augmented Lagrangian framework.

To assess the performance of the proposed method, we study the computational cost required to achieve a given value of the objective function, i.e., given $\epsilon>0$, we investigate the costs for each algorithm to satisfy $S_i < \epsilon$, where $S_i$ is the normalized distance from the optimal objective value at major iteration $i$
\begin{equation}\label{eqn:numexp:si}
S_i \coloneqq \frac{\abs{j(\ubm^\star(\mubold_i^\star),\mubold_i^\star)-j_\star}}{j(\ubm^\star(\mubold_0),\mubold_0)},
\end{equation}
$\mubold_0$ is the initial guess for the optimal parameter configuration, and $j_\star$ is the optimal objective value determined by HDM optimization (interior-point) with optimality tolerance $\omega_\star = 10^{-10}$. Furthermore, we introduce the following notation for the objective function, constraint function, and optimality measure at the end of major iteration $i$
\begin{equation}
 j_i \coloneqq j(\ubm^\star(\mubold_i^\star),\mubold_i^\star), \qquad
 \cbm_i \coloneqq \cbm(\ubm^\star(\mubold_i^\star),\mubold_i^\star), \qquad
 \chi_i \coloneqq \chi(\mubold_i^\star).
\end{equation}
We quantify computational cost with the CPU time required to reach $S_i < \epsilon$ for a given $\epsilon$ and constraint violation tolerance.

In this work, there are only a few user-defined parameters because most of the EQP tolerances are prescribed in Section~\ref{sec:eqp_trammo} directly from the current optimization state (\ref{eqn:eqp_tol_cond}). Among the remaining user-defined parameters are those related to the trust-region algorithm itself. We fix these parameters are reasonable values determined from the thorough study in \cite{wen_globally_2023}: $\Delta_0=0.1$, $\eta_1 = 0.1$, $\eta_2 = 0.75$, $\gamma_1=0.5$, $\gamma_2 = 1$, and $\hat\kappa = 10^{-6}$. The remaining parameters are the EQP tolerances that do not appear in the convergence criteria but empirically accelerate the optimization, which we set as: $\delta_\mathtt{dv}=\delta_\mathtt{lq}=10^{-6}$, and $\delta_\mathtt{rs}= 5 \times 10^{-4}$. Finally, we set the constraint tolerance $\pi_\star = 10^{-6}$ and optimality tolerance $\omega_\star = 10^{-5}$. 

\subsection{Inverse design of an airfoil in inviscid, subsonic flow with one side constraint}
\label{sec:numexp1}
We consider an aerodynamic shape design problem that aims to recover an airfoil with a similar flow profile as the RAE2822 with lower drag starting from a NACA0012 at Mach number $M_\infty = 0.5$ and $2^\circ$ angle of attack. Let $\Omega\subset\Rbb^d$ ($d=2$) be the circular region around the NACA0012 airfoil with cord length $L=1$ that extends $10L$ from the leading edge, and consider steady, inviscid, compressible flow governed by the compressible Euler equations 
\begin{equation} \label{eqn:numexp:euler}
\pder{}{x_j}\left(\rho v_j\right) = 0, \quad
\pder{}{x_j}\left(\rho v_iv_j+P\delta_{ij}\right) = 0, \quad
\pder{}{x_j}\left(\left[\rho E+P\right]v_j\right) = 0,
\end{equation}
where $i = 1,\dots, d$ and the repeated index $j=1,\dots,d$ indicates the Einstein summation. The density of the fluid $\func{\rho}{\Omega}{\Rbb_{>0}}$, its velocity $\func{v_i}{\Omega}{\Rbb}$ in the $x_i$ direction for $i=1,\dots,d$, and its total energy $\func{E}{\Omega}{\Rbb_{>0}}$ are implicitly defined as the solution of (\ref{eqn:numexp:euler}).
For a fluid with calorically ideal properties, the pressure of the fluid, $\func{P}{\Omega}{\Rbb_{>0}}$, is related to the energy via the ideal gas law 
\begin{equation}
P = (\gamma-1)\left(\rho E - \frac{\rho v_i v_i}{2}\right),
\end{equation}
where $\gamma\in\Rbb_{>0}$ is the ratio of specific heats. The conservative variables are collected into a state vector $U : \Omega \rightarrow \Rbb^{d+2}$ with $U : x \mapsto (\rho(x),\rho(x)v(x), \rho(x)E(x))$. In this problem, a farfield boundary condition is applied to the cutoff surface of the domain and a slip wall condition ($v\cdot n = 0$) is applied to the airfoil surface.

The goal of this problem is to reconstruct a flow field similar to that of the RAE2822 airfoil, denoted as $U_\mathrm{RAE2822} : \Omega \rightarrow \Rbb^{d+2}$, with lower drag by adjusting the airfoil surface, which is parametrized using a Bezier curve with 18 control points ($\mubold$). The deformation is propagated throughout the mesh domain using linear elasticity. This leads to the following PDE-constrained optimization problem 
\begin{equation}\label{eqn:contin_inv}
    \optconTwo{U,\mubold}
    {\frac{1}{2}\int_{\Omega(\mubold)} \norm{U - U_\mathrm{RAE2822}}^2 ~ dV}
    {\Lcal(U;\mubold)=0}
    {D(U,\mubold) \leq 0.98 D_{\mathrm{RAE2822}}}
\end{equation}
where $D(U,\mubold)$ is the drag of the airfoil with shape defined by the control points $\mubold$ and flow field $U$, $D_{\mathrm{RAE2822}} = D(U_\mathrm{RAE2822},\mubold_\mathrm{RAE2822})$ is the drag of the RAE2822 airfoil, $\mubold_\mathrm{RAE2822}$ are the Bezier control points defining the RAE2822 airfoil surface, and $\Lcal$ is the differential operator that includes the Euler equations (\ref{eqn:numexp:euler}) and appropriate boundary conditions. The discrete optimization problem in (\ref{eqn:hdm_full_space}) is formulated by discretizing the governing equations and quantity of interest with a nodal discontinuous Galerkin (DG) method on a mesh consisting of $1018$ $\Pcal^2$ triangular elements (generated using DistMesh \cite{persson_simple_2004}).


\subsubsection{Influence of the initial penalty parameter $\tau_0$ and its scaling factor $a$}
\label{sec:param}
In the augmented Lagrangian framework, the initial penalty parameter $\tau_0$ and its scaling factor $a$ are user-defined parameters and play an important role in the efficiency of the algorithm, influencing the overall convergence speed and feasibility. Given that these two parameter jointly define $\tau_i = \tau_0 a^{k_i}$ for some $k_i$ throughout the optimization process, we examine their combined effect by exploring nine scenarios: $(\tau_0, a) \in [10, 25, 50] \times [10, 50, 100]$. Figure~\ref{fig:parm_fact_study} shows the convergence history of the objective value in (\ref{eqn:contin_inv}) and constraint violation for each case. The choice of $(\tau_0=10, a=50)$ outperforms other settings by achieving the fastest convergence to the feasible region that satisfies the stopping optimality tolerance. This observation underscores that even though higher values of $\tau_0$ and $a$ can strictly enforce feasibility trajectories throughout the optimization process, they can lead to an increase in the number of trust-region iterations required for step computation in Algorithm~\ref{alg:auglag}. Hence, a balanced approach in choosing $\tau_0$ and $a$ emerges as crucial for optimizing algorithmic efficiency and performance.
\begin{figure}
    \centering
    \begin{minipage}[t]{0.49\textwidth}
        \centering
        \tikzset{every picture/.style={scale=0.98}}
        \input{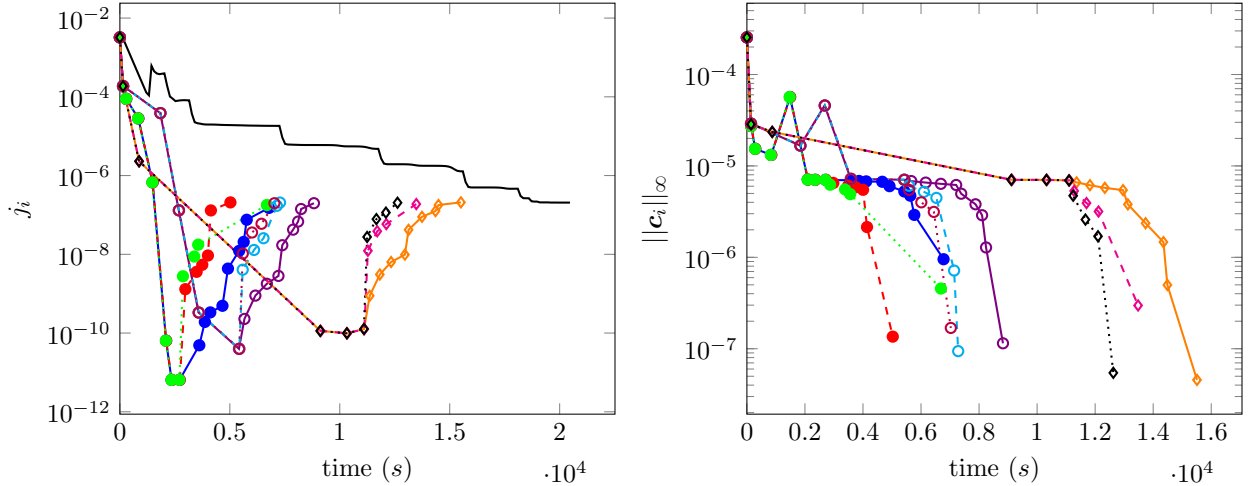}
    \end{minipage}
    \hfill
    \begin{minipage}[t]{0.49\textwidth}
        \centering
        \tikzset{every picture/.style={scale=0.98}}
        \begin{tikzpicture}
\begin{axis}[
xmin=0,
ymode=log,
xlabel={time $(s)$},
ylabel={$||\cbm_i||_\infty$}]
\addplot [mark=*, mark size=2, mark options={solid}, thick, solid, blue, mark repeat=1]
coordinates {
( 0.00000000e+00,  2.54616700e-04)
( 1.46714107e+02,  2.73712524e-05)
( 2.86712636e+02,  1.53848250e-05)
( 8.37328408e+02,  1.31854998e-05)
( 1.48689723e+03,  5.67032935e-05)
( 2.10009673e+03,  7.09083663e-06)
( 2.35143923e+03,  7.04327490e-06)
( 2.71593949e+03,  7.04244130e-06)
( 3.61208128e+03,  6.95997978e-06)
( 3.86323702e+03,  6.84775994e-06)
( 4.11138794e+03,  6.77247666e-06)
( 4.66517533e+03,  6.70836728e-06)
( 4.91444585e+03,  6.00666705e-06)
( 5.42422193e+03,  5.25398008e-06)
( 5.63071480e+03,  4.72363180e-06)
( 5.76825923e+03,  2.91323975e-06)
( 6.77684992e+03,  9.53474320e-07)};\label{line:cp10f10}

\addplot [mark=*, mark size=2, mark options={solid}, thick, dashed, red, mark repeat=1]
coordinates {
( 0.00000000e+00,  2.54616700e-04)
( 1.47369189e+02,  2.73712524e-05)
( 2.87925775e+02,  1.53848250e-05)
( 8.38970946e+02,  1.31854998e-05)
( 1.48994787e+03,  5.67032935e-05)
( 2.10484250e+03,  7.09083663e-06)
( 2.35755089e+03,  7.04327490e-06)
( 2.72453808e+03,  7.04244130e-06)
( 2.97720871e+03,  6.51871365e-06)
( 3.49007434e+03,  6.08827942e-06)
( 3.74687223e+03,  5.85416924e-06)
( 4.00308884e+03,  5.47921035e-06)
( 4.13878587e+03,  2.14456124e-06)
( 5.02667745e+03,  1.35636628e-07)};\label{line:cp10f50}

\addplot [mark=*, mark size=2, mark options={solid}, thick, dotted, green, mark repeat=1]
coordinates {
( 0.00000000e+00,  2.54616700e-04)
( 1.42828634e+02,  2.73712524e-05)
( 2.79765620e+02,  1.53848250e-05)
( 8.25034401e+02,  1.31854998e-05)
( 1.47270534e+03,  5.67032935e-05)
( 2.08946114e+03,  7.09083663e-06)
( 2.34230054e+03,  7.04327490e-06)
( 2.70551556e+03,  7.04244130e-06)
( 2.86776991e+03,  6.22843768e-06)
( 3.38326011e+03,  5.53315053e-06)
( 3.56017337e+03,  4.90783001e-06)
( 6.68591659e+03,  4.55532977e-07)};\label{line:cp10f100}

\addplot [mark=o, mark size=2, mark options={solid}, thick, solid, violet, mark repeat=1]
coordinates {
( 0.00000000e+00,  2.54616700e-04)
( 1.41572842e+02,  2.90695596e-05)
( 1.83818775e+03,  1.66885503e-05)
( 2.67781491e+03,  4.58074502e-05)
( 3.57844457e+03,  7.21979122e-06)
( 5.41504202e+03,  7.05866858e-06)
( 5.66426050e+03,  6.82524837e-06)
( 6.16231518e+03,  6.57386815e-06)
( 6.68366787e+03,  6.37269869e-06)
( 7.21102459e+03,  6.18598896e-06)
( 7.38495113e+03,  4.96466404e-06)
( 7.87791952e+03,  3.80537251e-06)
( 8.10400183e+03,  2.88865043e-06)
( 8.24109074e+03,  1.28203428e-06)
( 8.82422869e+03,  1.14674853e-07)};\label{line:cp25f10}

\addplot [mark=o, mark size=2, mark options={solid}, thick, dashed, cyan, mark repeat=1]
coordinates {
( 0.00000000e+00,  2.54616700e-04)
( 1.42825755e+02,  2.90695596e-05)
( 1.84723594e+03,  1.66885503e-05)
( 2.68786462e+03,  4.58074502e-05)
( 3.59113612e+03,  7.21979122e-06)
( 5.43022353e+03,  7.05866858e-06)
( 5.58519279e+03,  6.06996049e-06)
( 6.09957146e+03,  5.20580351e-06)
( 6.53062721e+03,  4.46960902e-06)
( 7.14016746e+03,  7.16576586e-07)
( 7.28079319e+03,  9.41399180e-08)};\label{line:cp25f50}

\addplot [mark=o, mark size=2, mark options={solid}, thick, dotted, purple, mark repeat=1]
coordinates {
( 0.00000000e+00,  2.54616700e-04)
( 1.42487317e+02,  2.90695596e-05)
( 1.84681471e+03,  1.66885503e-05)
( 2.68683970e+03,  4.58074502e-05)
( 3.58936366e+03,  7.21979122e-06)
( 5.43001620e+03,  7.05866858e-06)
( 5.58957634e+03,  5.46685493e-06)
( 6.01056160e+03,  3.99653419e-06)
( 6.44145278e+03,  3.14113655e-06)
( 7.02723558e+03,  1.68890161e-07)};\label{line:cp25f100}

\addplot [mark=diamond, mark size=2, mark options={solid}, thick, solid, orange, mark repeat=1]
coordinates {
( 0.00000000e+00,  2.54616700e-04)
( 1.42967969e+02,  2.85334267e-05)
( 8.73039104e+02,  2.35632431e-05)
( 9.12233386e+03,  7.04043256e-06)
( 1.03282738e+04,  7.03661031e-06)
( 1.11045941e+04,  6.93976985e-06)
( 1.13602848e+04,  6.60806286e-06)
( 1.18212512e+04,  6.15312722e-06)
( 1.23344962e+04,  5.75615798e-06)
( 1.29626104e+04,  5.43357780e-06)
( 1.31273082e+04,  3.79890006e-06)
( 1.37385700e+04,  2.35972937e-06)
( 1.43533638e+04,  1.46805038e-06)
( 1.44893622e+04,  4.98014999e-07)
( 1.55097409e+04,  4.55950894e-08)};\label{line:cp50f10}

\addplot [mark=diamond, mark size=2, mark options={solid}, thick, dashed, magenta, mark repeat=1]
coordinates {
( 0.00000000e+00,  2.54616700e-04)
( 1.43394710e+02,  2.85334267e-05)
( 8.76764153e+02,  2.35632431e-05)
( 9.13357709e+03,  7.04043256e-06)
( 1.03384998e+04,  7.03661031e-06)
( 1.11152419e+04,  6.93976985e-06)
( 1.12778003e+04,  5.32590141e-06)
( 1.17028287e+04,  3.92259433e-06)
( 1.21157427e+04,  3.17072404e-06)
( 1.34795779e+04,  2.97627530e-07)};\label{line:cp50f50}

\addplot [mark=diamond, mark size=2, mark options={solid}, thick, dotted, black, mark repeat=1]
coordinates {
( 0.00000000e+00,  2.54616700e-04)
( 1.42910475e+02,  2.85334267e-05)
( 8.71232093e+02,  2.35632431e-05)
( 9.10954899e+03,  7.04043256e-06)
( 1.03192687e+04,  7.03661031e-06)
( 1.10974223e+04,  6.93976985e-06)
( 1.12433373e+04,  4.72166088e-06)
( 1.16651695e+04,  2.58922673e-06)
( 1.20989925e+04,  1.70091554e-06)
( 1.26261195e+04,  5.44778429e-08)};\label{line:cp50f100}

\end{axis}
\end{tikzpicture}
    \end{minipage}
    \caption{Convergence history (only major iterations shown) of the objective value (\textit{left}) and constraint violation (\textit{right}) of the EQP/TR method applied the inverse design problem (\ref{eqn:contin_inv}) for combinations of $(\tau_0, a)$. Legend: 
    $(\tau_0=10,a=10)$ (\ref{line:fp10f10}), 
    $(\tau_0=10,a=50)$ (\ref{line:fp10f50}),
    $(\tau_0=10,a=100)$ (\ref{line:fp10f100}),
    $(\tau_0=25,a=10)$ (\ref{line:fp25f10}),
    $(\tau_0=25,a=50)$ (\ref{line:fp25f50}),
    $(\tau_0=25,a=100)$ (\ref{line:fp25f100}),
    $(\tau_0=50,a=10)$ (\ref{line:fp50f10}),
    $(\tau_0=50,a=50)$ (\ref{line:fp50f50}),
    $(\tau_0=50,a=100)$ (\ref{line:fp50f100}),
    and HDM solution (\ref{line:fhdm}).
    }
    \label{fig:parm_fact_study}
\end{figure}
\subsubsection{Influence of inheriting the snapshots from the previous AL iteration}
\label{sec:snap}
Next, we explore the influence of inheriting the snapshots from the previous AL iteration. Algorithm~\ref{alg:auglag} is designed to reset the trust-region solver with each new major iteration, denoted as $i$. This structure offers a unique opportunity to expedite the convergence process by integrating snapshots from the previous iteration, $i-1$, into the current trust-region cycle because the reduced basis $\Phibold_k$ will be enriched. In this study, we choose $[0, 5, 15, 20]$ snapshots, for each primal and dual solution collections, from $(i-1)$th iteration.
Figure~\ref{fig:snapsz_study} shows unexpected results that the strategy of enriching $\Phibold_k$ with snapshots from previous iterations does not yield the anticipated benefits for problem because the additional snapshots increase the size of $\Phibold_k$, which increases the cost of the EQP weight calculation and each ROM query without substantially reducing the number of iterations required for convergence.
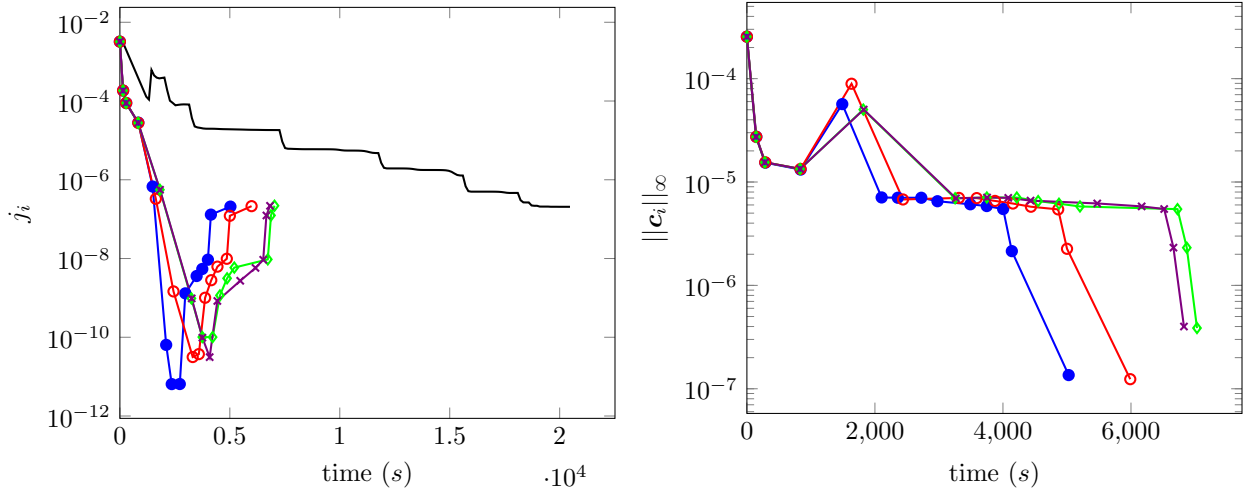
\begin{figure}
    \centering
    \begin{minipage}[t]{0.49\textwidth}
        \centering
        \tikzset{every picture/.style={scale=0.98}}
        \begin{tikzpicture}
\begin{axis}[
xmin=0,
ymode=log,
xlabel={time $(s)$},
ylabel={$j_i$}]
\addplot [mark=*, mark size=2, mark options={solid}, thick, solid, blue, mark repeat=1]
coordinates {
( 0.00000000e+00,  3.22557827e-03)
( 1.47369189e+02,  1.84513119e-04)
( 2.87925775e+02,  8.89305541e-05)
( 8.38970946e+02,  2.82500267e-05)
( 1.48994787e+03,  6.72525521e-07)
( 2.10484250e+03,  6.40094442e-11)
( 2.35755089e+03,  6.45858760e-12)
( 2.72453808e+03,  6.48917813e-12)
( 2.97720871e+03,  1.30282149e-09)
( 3.49007434e+03,  3.59256693e-09)
( 3.74687223e+03,  5.42489470e-09)
( 4.00308884e+03,  9.32367131e-09)
( 4.13878587e+03,  1.30193105e-07)
( 5.02667745e+03,  2.08142929e-07)};\label{line:fsz0}

\addplot [mark=o, mark size=2, mark options={solid}, thick, solid, red, mark repeat=1]
coordinates {
( 0.00000000e+00,  3.22557827e-03)
( 1.46321237e+02,  1.84513119e-04)
( 2.86339219e+02,  8.91544299e-05)
( 8.38277865e+02,  2.78085456e-05)
( 1.63471072e+03,  3.28727645e-07)
( 2.43823247e+03,  1.45990710e-09)
( 3.31212104e+03,  3.14662121e-11)
( 3.59033168e+03,  3.72860164e-11)
( 3.87532244e+03,  1.00790500e-09)
( 4.15532890e+03,  2.81509625e-09)
( 4.43730906e+03,  6.21632826e-09)
( 4.86277361e+03,  9.84815653e-09)
( 5.00096138e+03,  1.21616627e-07)
( 5.98888312e+03,  2.13623697e-07)};\label{line:fsz5}

\addplot [mark=diamond, mark size=2, mark options={solid}, thick, solid, green, mark repeat=1]
coordinates {
( 0.00000000e+00,  3.22557827e-03)
( 1.42879364e+02,  1.84513119e-04)
( 2.80377146e+02,  8.91544299e-05)
( 8.30673215e+02,  2.78085456e-05)
( 1.81849819e+03,  5.57476470e-07)
( 3.25254826e+03,  9.70123416e-10)
( 3.74853558e+03,  1.01131216e-10)
( 4.21654705e+03,  1.00784551e-10)
( 4.54995675e+03,  1.15549023e-09)
( 4.87509364e+03,  3.13995159e-09)
( 5.20331692e+03,  5.87990890e-09)
( 6.72671101e+03,  9.49313260e-09)
( 6.87181225e+03,  1.24377485e-07)
( 7.03128677e+03,  2.19291026e-07)};\label{line:fsz15}

\addplot [mark=x, mark size=2, mark options={solid}, thick, solid, violet, mark repeat=1]
coordinates {
( 0.00000000e+00,  3.22557827e-03)
( 1.42859614e+02,  1.84513119e-04)
( 2.81271223e+02,  8.91544299e-05)
( 8.32085241e+02,  2.78085456e-05)
( 1.82496697e+03,  5.57476470e-07)
( 3.26602134e+03,  9.70123416e-10)
( 3.75169423e+03,  9.77983684e-11)
( 4.08279123e+03,  3.12665622e-11)
( 4.43127525e+03,  8.35270851e-10)
( 5.47287503e+03,  2.72020070e-09)
( 6.16688517e+03,  5.77678583e-09)
( 6.51848638e+03,  9.29258627e-09)
( 6.66332658e+03,  1.23916272e-07)
( 6.82749195e+03,  2.17821308e-07)};\label{line:fsz20}

\addplot [thick, solid, black]
coordinates {
( 1.22896869e+02,  3.22557827e-03)
( 1.06799565e+03,  1.90546969e-04)
( 1.19932398e+03,  1.28867972e-04)
( 1.32098372e+03,  1.09043901e-04)
( 1.43812967e+03,  6.12303064e-04)
( 1.55579032e+03,  4.47296750e-04)
( 1.67342566e+03,  3.92956542e-04)
( 1.79059177e+03,  3.76058934e-04)
( 1.90739485e+03,  3.84816272e-04)
( 2.02423345e+03,  3.94022032e-04)
( 2.15015503e+03,  1.89018700e-04)
( 2.27590696e+03,  1.01839033e-04)
( 2.40070099e+03,  8.96532613e-05)
( 2.52550812e+03,  7.79420657e-05)
( 2.65012963e+03,  7.98990001e-05)
( 2.77488475e+03,  8.14942580e-05)
( 2.89962137e+03,  8.21747456e-05)
( 3.02436618e+03,  8.18450639e-05)
( 3.14893007e+03,  8.13326522e-05)
( 3.27364043e+03,  3.70506303e-05)
( 3.40191371e+03,  2.26126294e-05)
( 3.53029760e+03,  2.13233091e-05)
( 3.65853409e+03,  2.07631477e-05)
( 3.78714573e+03,  2.00824867e-05)
( 3.91624603e+03,  1.98057092e-05)
( 4.04485389e+03,  1.97751158e-05)
( 4.17345000e+03,  1.97761299e-05)
( 4.30196766e+03,  1.97188580e-05)
( 4.43051719e+03,  1.95842981e-05)
( 4.55902992e+03,  1.94203818e-05)
( 4.68753402e+03,  1.93061270e-05)
( 4.81598306e+03,  1.92423801e-05)
( 4.94449532e+03,  1.91806371e-05)
( 5.07304974e+03,  1.91092850e-05)
( 5.20145196e+03,  1.90348185e-05)
( 5.32982366e+03,  1.89653850e-05)
( 5.45817850e+03,  1.88984172e-05)
( 5.58659537e+03,  1.88221660e-05)
( 5.71509484e+03,  1.87292359e-05)
( 5.84340581e+03,  1.86381189e-05)
( 5.97170617e+03,  1.85763509e-05)
( 6.10027403e+03,  1.85407756e-05)
( 6.22872834e+03,  1.85039044e-05)
( 6.35721110e+03,  1.84536106e-05)
( 6.48555724e+03,  1.83975264e-05)
( 6.61402312e+03,  1.83540647e-05)
( 6.74229497e+03,  1.83318501e-05)
( 6.87075389e+03,  1.83187152e-05)
( 6.99912469e+03,  1.83040206e-05)
( 7.12754256e+03,  1.82898208e-05)
( 7.25607015e+03,  1.82818175e-05)
( 7.38446190e+03,  9.32098228e-06)
( 7.51286242e+03,  6.37991159e-06)
( 7.64128943e+03,  6.13775475e-06)
( 7.76971719e+03,  6.11444171e-06)
( 7.89819098e+03,  6.07066532e-06)
( 8.02663548e+03,  6.04111352e-06)
( 8.15517042e+03,  6.00590301e-06)
( 8.28362833e+03,  5.98041099e-06)
( 8.41209695e+03,  5.97177719e-06)
( 8.54052767e+03,  5.97139362e-06)
( 8.66903643e+03,  5.97127575e-06)
( 8.79734534e+03,  5.96994879e-06)
( 8.92585742e+03,  5.96648286e-06)
( 9.05433894e+03,  5.96250257e-06)
( 9.18277367e+03,  5.95947355e-06)
( 9.31124429e+03,  5.95669307e-06)
( 9.43972234e+03,  5.95185346e-06)
( 9.56822342e+03,  5.94173936e-06)
( 9.69669575e+03,  5.91824248e-06)
( 9.82517140e+03,  5.86526970e-06)
( 9.95378089e+03,  5.76293262e-06)
( 1.00821964e+04,  5.62778111e-06)
( 1.02107705e+04,  5.53847387e-06)
( 1.03392187e+04,  5.51585165e-06)
( 1.04677542e+04,  5.51291692e-06)
( 1.05962014e+04,  5.51108248e-06)
( 1.07247322e+04,  5.50463262e-06)
( 1.08532869e+04,  5.48991815e-06)
( 1.09818725e+04,  5.45197619e-06)
( 1.11103119e+04,  5.36725677e-06)
( 1.12387809e+04,  5.20558513e-06)
( 1.13672436e+04,  4.99203282e-06)
( 1.14957869e+04,  4.83205107e-06)
( 1.16243141e+04,  4.77476314e-06)
( 1.17528924e+04,  4.76584050e-06)
( 1.18813664e+04,  2.70149687e-06)
( 1.20097777e+04,  2.01025378e-06)
( 1.21383105e+04,  1.94281879e-06)
( 1.22667609e+04,  1.94105764e-06)
( 1.23951525e+04,  1.94064423e-06)
( 1.25235539e+04,  1.94060670e-06)
( 1.26520451e+04,  1.94045836e-06)
( 1.27803694e+04,  1.93980548e-06)
( 1.29090158e+04,  1.93765860e-06)
( 1.30374128e+04,  1.93182884e-06)
( 1.31658461e+04,  1.91743699e-06)
( 1.32944803e+04,  1.88736905e-06)
( 1.34184993e+04,  1.84127143e-06)
( 1.35422898e+04,  1.79908794e-06)
( 1.36709578e+04,  1.77843070e-06)
( 1.37993906e+04,  1.77267107e-06)
( 1.39278250e+04,  1.77157791e-06)
( 1.40561505e+04,  1.77096941e-06)
( 1.41845574e+04,  1.77028186e-06)
( 1.43129085e+04,  1.76949190e-06)
( 1.44412862e+04,  1.76805255e-06)
( 1.45695984e+04,  1.76480416e-06)
( 1.46980009e+04,  1.75672924e-06)
( 1.48264080e+04,  1.73666511e-06)
( 1.49549191e+04,  1.68920763e-06)
( 1.50833869e+04,  1.59143414e-06)
( 1.52121715e+04,  1.44350067e-06)
( 1.53405770e+04,  1.32232732e-06)
( 1.54693184e+04,  1.28537112e-06)
( 1.55975756e+04,  1.28088636e-06)
( 1.57221539e+04,  6.83353405e-07)
( 1.58469055e+04,  5.13488366e-07)
( 1.59717817e+04,  4.99008537e-07)
( 1.60965048e+04,  4.98828404e-07)
( 1.62212168e+04,  4.98851118e-07)
( 1.63460100e+04,  4.98860162e-07)
( 1.64707656e+04,  4.98858233e-07)
( 1.65955451e+04,  4.98838987e-07)
( 1.67202874e+04,  4.98780515e-07)
( 1.68451763e+04,  4.98627699e-07)
( 1.69723014e+04,  4.98227827e-07)
( 1.70972224e+04,  4.97206058e-07)
( 1.72221524e+04,  4.94681163e-07)
( 1.73470224e+04,  4.88993219e-07)
( 1.74719131e+04,  4.78703221e-07)
( 1.75967376e+04,  4.67170930e-07)
( 1.77215688e+04,  4.62137871e-07)
( 1.78463554e+04,  4.62507994e-07)
( 1.79712058e+04,  4.63122300e-07)
( 1.80961199e+04,  4.63251576e-07)
( 1.82209204e+04,  3.02992888e-07)
( 1.83457442e+04,  2.66665103e-07)
( 1.84706531e+04,  2.65321851e-07)
( 1.85954944e+04,  2.65318132e-07)
( 1.87203265e+04,  2.22583020e-07)
( 1.88452312e+04,  2.18314940e-07)
( 1.89702388e+04,  2.18300854e-07)
( 1.90951339e+04,  2.08514943e-07)
( 1.92200679e+04,  2.08265910e-07)
( 1.93448846e+04,  2.08265929e-07)
( 1.94697814e+04,  2.08265943e-07)
( 1.95946793e+04,  2.06237591e-07)
( 1.97195555e+04,  2.06226558e-07)
( 1.98443776e+04,  2.06226560e-07)
( 1.99691652e+04,  2.05817745e-07)
( 2.00940032e+04,  2.05817308e-07)
( 2.02188574e+04,  2.05817308e-07)
( 2.03439381e+04,  2.05735419e-07)
( 2.04688459e+04,  2.05735402e-07)};\label{line:fhdm}

\end{axis}
\end{tikzpicture}
    \end{minipage}
    \hfill
    \begin{minipage}[t]{0.49\textwidth}
        \centering
        \tikzset{every picture/.style={scale=0.98}}
        \begin{tikzpicture}
\begin{axis}[
xmin=0,
ymode=log,
xlabel={time $(s)$},
ylabel={$||\cbm_i||_\infty$}]
\addplot [mark=*, mark size=2, mark options={solid}, thick, solid, blue, mark repeat=1]
coordinates {
( 0.00000000e+00,  2.54616700e-04)
( 1.47369189e+02,  2.73712524e-05)
( 2.87925775e+02,  1.53848250e-05)
( 8.38970946e+02,  1.31854998e-05)
( 1.48994787e+03,  5.67032935e-05)
( 2.10484250e+03,  7.09083663e-06)
( 2.35755089e+03,  7.04327490e-06)
( 2.72453808e+03,  7.04244130e-06)
( 2.97720871e+03,  6.51871365e-06)
( 3.49007434e+03,  6.08827942e-06)
( 3.74687223e+03,  5.85416924e-06)
( 4.00308884e+03,  5.47921035e-06)
( 4.13878587e+03,  2.14456124e-06)
( 5.02667745e+03,  1.35636628e-07)};\label{line:csz0}

\addplot [mark=o, mark size=2, mark options={solid}, thick, solid, red, mark repeat=1]
coordinates {
( 0.00000000e+00,  2.54616700e-04)
( 1.46321237e+02,  2.73712524e-05)
( 2.86339219e+02,  1.55050204e-05)
( 8.38277865e+02,  1.33763652e-05)
( 1.63471072e+03,  8.92497861e-05)
( 2.43823247e+03,  6.78041435e-06)
( 3.31212104e+03,  7.01617786e-06)
( 3.59033168e+03,  6.98668207e-06)
( 3.87532244e+03,  6.56295602e-06)
( 4.15532890e+03,  6.19295829e-06)
( 4.43730906e+03,  5.76732365e-06)
( 4.86277361e+03,  5.43447343e-06)
( 5.00096138e+03,  2.25908295e-06)
( 5.98888312e+03,  1.23794880e-07)};\label{line:csz5}

\addplot [mark=diamond, mark size=2, mark options={solid}, thick, solid, green, mark repeat=1]
coordinates {
( 0.00000000e+00,  2.54616700e-04)
( 1.42879364e+02,  2.73712524e-05)
( 2.80377146e+02,  1.55050204e-05)
( 8.30673215e+02,  1.33763652e-05)
( 1.81849819e+03,  5.02012712e-05)
( 3.25254826e+03,  6.95953205e-06)
( 3.74853558e+03,  7.07908915e-06)
( 4.21654705e+03,  7.07506959e-06)
( 4.54995675e+03,  6.53969654e-06)
( 4.87509364e+03,  6.16472952e-06)
( 5.20331692e+03,  5.80199107e-06)
( 6.72671101e+03,  5.46468806e-06)
( 6.87181225e+03,  2.31499173e-06)
( 7.03128677e+03,  3.86697042e-07)};\label{line:csz15}

\addplot [mark=x, mark size=2, mark options={solid}, thick, solid, violet, mark repeat=1]
coordinates {
( 0.00000000e+00,  2.54616700e-04)
( 1.42859614e+02,  2.73712524e-05)
( 2.81271223e+02,  1.55050204e-05)
( 8.32085241e+02,  1.33763652e-05)
( 1.82496697e+03,  5.02012712e-05)
( 3.26602134e+03,  6.95953205e-06)
( 3.75169423e+03,  7.08511773e-06)
( 4.08279123e+03,  7.02532184e-06)
( 4.43127525e+03,  6.60881589e-06)
( 5.47287503e+03,  6.20293554e-06)
( 6.16688517e+03,  5.81060419e-06)
( 6.51848638e+03,  5.47981300e-06)
( 6.66332658e+03,  2.31831061e-06)
( 6.82749195e+03,  3.99974380e-07)};\label{line:csz20}

\end{axis}
\end{tikzpicture}
    \end{minipage}
    \caption{Convergence history (only major iterations shown) of the objective value (\textit{left}) and constraint violation (\textit{right}) of the EQP/TR method applied the inverse design problem (\ref{eqn:contin_inv}) when inheriting a different number of snapshots from the previous AL iteration. Legend: 
    0 snapshots (\ref{line:fsz0}), 
    5 snapshots (\ref{line:fsz5}), 
    15 snapshots (\ref{line:fsz15}),
    20 snapshots (\ref{line:fsz20}),
    and HDM solution (\ref{line:fhdm}).
    }
    \label{fig:snapsz_study}
\end{figure}
\subsubsection{Performance}
Next, we study the overall performance of the EQP/TR method with frozen parameters based on the studies in Sections~\ref{sec:param}-\ref{sec:snap}, i.e., we set $\tau_0 = 10$, $a=50$ and do not use snapshot from previous AL iterations. Table~\ref{tab:inv1_hist} presents the convergence history of the proposed method at selected iterations. The method is converging to a feasible point with $\norm{\chi_i}_\infty = 5.7\times 10^{-6}$ in 13 major (AL) iterations. At each major iteration, the value of objective function $j_i$ is not necessarily monotonically decreasing due to the trade-off between objective value reduction and constraint violation. In early iterations, we observe rapid reduction in $j_i$ with only minor reduction in the constraint violation. At the beginning of each major iteration, the algorithm reconstructs the reduced basis $\Phibold_0$ and resets the element usage, which leads to low element usage (approx. $15\%$ of entire mesh) in early trust-region iterations. Table~\ref{tab:inv1_hist} also shows reasonable maximum element usage below $21\%$. Due to the small reduced basis and relatively low element usage, the trust-region subproblems account for a small portion of the overall cost; the HDM evaluations at the end of each trust-region iteration (\ref{eqn:tr:redu_ratio}) dominate the computational costs. The element usage in the side-constrained setting is larger than the element usage in the unconstrained setting of \cite{wen_globally_2023} because of the additional EQP constraints required by the AL framework.
\begin{table}
\centering
\begin{tabular}{c c c c c c c} 
    \hline
    Iteration & $j_i$ & $||\cbm_i||_\infty$ & $||\chi_i||_\infty$ & $S_i$ & $||\rhobold_i||_{0,\mathrm{min}}$ $(\%)$& $||\rhobold_i||_{0,\mathrm{max}}$ $(\%)$\\ [0.5ex]
    \hline
    \input{_dat/inv1/inv1_hist.dat}
\end{tabular}
\caption{Convergence history of the EQP/TR method applied to the inverse design problem in (\ref{eqn:contin_inv}), where $||\rhobold_i||_{0,\mathrm{min}}$ ($||\rhobold_i||_{0,\mathrm{max}}$) is the fewest (most) nonzero EQP weights across all trust-region iterations at major iteration $i$.}
\label{tab:inv1_hist}
\end{table}

Finally, we compare the performance of the HDM, ROM, and EQP methods in terms of the total computational cost required to satisfy $S_i < \epsilon$ for $\epsilon \in \{10^{-3}, 10^{-4}, 10^{-6}\}$ and various constraint violation tolerances. Figure~\ref{fig:inv1_final_hist} shows the convergence history of the objective value $j_i$ and constraint violation $\norm{\cbm_i}_\infty$. Both the ROM/TR and EQP/TR methods converge much faster than the HDM-based method with the EQP/TR exhibiting the fastest convergence. The ROM/TR and EQP/TR methods prioritize objective reductions over constraint violation reduction in early iterations, which leads better objective values with loose constraint violation tolerances. Tables~\ref{tab:inv1:con_1e4} -~\ref{tab:inv1:con_1e6} present the performance of each method for three constraint violation tolerances ($10^{-4}$, $10^{-5}$, $10^{-6}$), respectively. Overall, we observe a speedup of the ROM/TR method (relative to the HDM-based method) between $1.3$ and $9.8$, and a speedup of the EQP/TR method between $2.4$ and $8.0$. Tighter constraint violation tolerances reduce the overall speedup of the ROM/TR and EQP/TR methods because additional AL iterations with larger penalty parameter are required. Larger penalty parameter leads to additional trust-region iterations, which in turn leads to a larger reduced basis and more expensive EQP training. The initial and optimal shape produced by each method considered are shown in Figure~\ref{fig:inv1:optimal_soln}.
\begin{figure}
    \centering
    \begin{minipage}[t]{0.49\textwidth}
        \centering
        \tikzset{every picture/.style={scale=0.98}}
        \begin{tikzpicture}
\begin{axis}[
xmin=0,
ymode=log,
xlabel={time $(s)$},
ylabel={$j_i$}]
\addplot [mark=*, mark size=2, mark options={solid}, thick, solid, blue, mark repeat=1]
coordinates {
( 0.00000000e+00,  3.22557827e-03)
( 1.47369189e+02,  1.84513119e-04)
( 2.87925775e+02,  8.89305541e-05)
( 8.38970946e+02,  2.82500267e-05)
( 1.48994787e+03,  6.72525521e-07)
( 2.10484250e+03,  6.40094442e-11)
( 2.35755089e+03,  6.45858760e-12)
( 2.72453808e+03,  6.48917813e-12)
( 2.97720871e+03,  1.30282149e-09)
( 3.49007434e+03,  3.59256693e-09)
( 3.74687223e+03,  5.42489470e-09)
( 4.00308884e+03,  9.32367131e-09)
( 4.13878587e+03,  1.30193105e-07)
( 5.02667745e+03,  2.08142929e-07)};\label{line:feqp}

\addplot [mark=o, mark size=2, mark options={solid}, thick, dashed, red, mark repeat=1]
coordinates {
( 0.00000000e+00,  3.22557827e-03)
( 1.55085087e+02,  1.81478846e-04)
( 3.01007928e+02,  3.35389161e-05)
( 4.46587823e+02,  2.65888646e-05)
( 1.62712662e+03,  1.42694492e-07)
( 3.46455895e+03,  2.89171265e-11)
( 4.11507004e+03,  8.30252730e-12)
( 4.76437250e+03,  6.19468777e-12)
( 5.04230921e+03,  8.39195064e-10)
( 5.69235436e+03,  2.94533122e-09)
( 6.34200166e+03,  5.85110830e-09)
( 7.36254789e+03,  9.81966315e-09)
( 7.52111548e+03,  1.32457320e-07)
( 9.04514515e+03,  2.04686549e-07)};\label{line:from}

\addplot [thick, solid, black]
coordinates {
( 1.22896869e+02,  3.22557827e-03)
( 1.06799565e+03,  1.90546969e-04)
( 1.19932398e+03,  1.28867972e-04)
( 1.32098372e+03,  1.09043901e-04)
( 1.43812967e+03,  6.12303064e-04)
( 1.55579032e+03,  4.47296750e-04)
( 1.67342566e+03,  3.92956542e-04)
( 1.79059177e+03,  3.76058934e-04)
( 1.90739485e+03,  3.84816272e-04)
( 2.02423345e+03,  3.94022032e-04)
( 2.15015503e+03,  1.89018700e-04)
( 2.27590696e+03,  1.01839033e-04)
( 2.40070099e+03,  8.96532613e-05)
( 2.52550812e+03,  7.79420657e-05)
( 2.65012963e+03,  7.98990001e-05)
( 2.77488475e+03,  8.14942580e-05)
( 2.89962137e+03,  8.21747456e-05)
( 3.02436618e+03,  8.18450639e-05)
( 3.14893007e+03,  8.13326522e-05)
( 3.27364043e+03,  3.70506303e-05)
( 3.40191371e+03,  2.26126294e-05)
( 3.53029760e+03,  2.13233091e-05)
( 3.65853409e+03,  2.07631477e-05)
( 3.78714573e+03,  2.00824867e-05)
( 3.91624603e+03,  1.98057092e-05)
( 4.04485389e+03,  1.97751158e-05)
( 4.17345000e+03,  1.97761299e-05)
( 4.30196766e+03,  1.97188580e-05)
( 4.43051719e+03,  1.95842981e-05)
( 4.55902992e+03,  1.94203818e-05)
( 4.68753402e+03,  1.93061270e-05)
( 4.81598306e+03,  1.92423801e-05)
( 4.94449532e+03,  1.91806371e-05)
( 5.07304974e+03,  1.91092850e-05)
( 5.20145196e+03,  1.90348185e-05)
( 5.32982366e+03,  1.89653850e-05)
( 5.45817850e+03,  1.88984172e-05)
( 5.58659537e+03,  1.88221660e-05)
( 5.71509484e+03,  1.87292359e-05)
( 5.84340581e+03,  1.86381189e-05)
( 5.97170617e+03,  1.85763509e-05)
( 6.10027403e+03,  1.85407756e-05)
( 6.22872834e+03,  1.85039044e-05)
( 6.35721110e+03,  1.84536106e-05)
( 6.48555724e+03,  1.83975264e-05)
( 6.61402312e+03,  1.83540647e-05)
( 6.74229497e+03,  1.83318501e-05)
( 6.87075389e+03,  1.83187152e-05)
( 6.99912469e+03,  1.83040206e-05)
( 7.12754256e+03,  1.82898208e-05)
( 7.25607015e+03,  1.82818175e-05)
( 7.38446190e+03,  9.32098228e-06)
( 7.51286242e+03,  6.37991159e-06)
( 7.64128943e+03,  6.13775475e-06)
( 7.76971719e+03,  6.11444171e-06)
( 7.89819098e+03,  6.07066532e-06)
( 8.02663548e+03,  6.04111352e-06)
( 8.15517042e+03,  6.00590301e-06)
( 8.28362833e+03,  5.98041099e-06)
( 8.41209695e+03,  5.97177719e-06)
( 8.54052767e+03,  5.97139362e-06)
( 8.66903643e+03,  5.97127575e-06)
( 8.79734534e+03,  5.96994879e-06)
( 8.92585742e+03,  5.96648286e-06)
( 9.05433894e+03,  5.96250257e-06)
( 9.18277367e+03,  5.95947355e-06)
( 9.31124429e+03,  5.95669307e-06)
( 9.43972234e+03,  5.95185346e-06)
( 9.56822342e+03,  5.94173936e-06)
( 9.69669575e+03,  5.91824248e-06)
( 9.82517140e+03,  5.86526970e-06)
( 9.95378089e+03,  5.76293262e-06)
( 1.00821964e+04,  5.62778111e-06)
( 1.02107705e+04,  5.53847387e-06)
( 1.03392187e+04,  5.51585165e-06)
( 1.04677542e+04,  5.51291692e-06)
( 1.05962014e+04,  5.51108248e-06)
( 1.07247322e+04,  5.50463262e-06)
( 1.08532869e+04,  5.48991815e-06)
( 1.09818725e+04,  5.45197619e-06)
( 1.11103119e+04,  5.36725677e-06)
( 1.12387809e+04,  5.20558513e-06)
( 1.13672436e+04,  4.99203282e-06)
( 1.14957869e+04,  4.83205107e-06)
( 1.16243141e+04,  4.77476314e-06)
( 1.17528924e+04,  4.76584050e-06)
( 1.18813664e+04,  2.70149687e-06)
( 1.20097777e+04,  2.01025378e-06)
( 1.21383105e+04,  1.94281879e-06)
( 1.22667609e+04,  1.94105764e-06)
( 1.23951525e+04,  1.94064423e-06)
( 1.25235539e+04,  1.94060670e-06)
( 1.26520451e+04,  1.94045836e-06)
( 1.27803694e+04,  1.93980548e-06)
( 1.29090158e+04,  1.93765860e-06)
( 1.30374128e+04,  1.93182884e-06)
( 1.31658461e+04,  1.91743699e-06)
( 1.32944803e+04,  1.88736905e-06)
( 1.34184993e+04,  1.84127143e-06)
( 1.35422898e+04,  1.79908794e-06)
( 1.36709578e+04,  1.77843070e-06)
( 1.37993906e+04,  1.77267107e-06)
( 1.39278250e+04,  1.77157791e-06)
( 1.40561505e+04,  1.77096941e-06)
( 1.41845574e+04,  1.77028186e-06)
( 1.43129085e+04,  1.76949190e-06)
( 1.44412862e+04,  1.76805255e-06)
( 1.45695984e+04,  1.76480416e-06)
( 1.46980009e+04,  1.75672924e-06)
( 1.48264080e+04,  1.73666511e-06)
( 1.49549191e+04,  1.68920763e-06)
( 1.50833869e+04,  1.59143414e-06)
( 1.52121715e+04,  1.44350067e-06)
( 1.53405770e+04,  1.32232732e-06)
( 1.54693184e+04,  1.28537112e-06)
( 1.55975756e+04,  1.28088636e-06)
( 1.57221539e+04,  6.83353405e-07)
( 1.58469055e+04,  5.13488366e-07)
( 1.59717817e+04,  4.99008537e-07)
( 1.60965048e+04,  4.98828404e-07)
( 1.62212168e+04,  4.98851118e-07)
( 1.63460100e+04,  4.98860162e-07)
( 1.64707656e+04,  4.98858233e-07)
( 1.65955451e+04,  4.98838987e-07)
( 1.67202874e+04,  4.98780515e-07)
( 1.68451763e+04,  4.98627699e-07)
( 1.69723014e+04,  4.98227827e-07)
( 1.70972224e+04,  4.97206058e-07)
( 1.72221524e+04,  4.94681163e-07)
( 1.73470224e+04,  4.88993219e-07)
( 1.74719131e+04,  4.78703221e-07)
( 1.75967376e+04,  4.67170930e-07)
( 1.77215688e+04,  4.62137871e-07)
( 1.78463554e+04,  4.62507994e-07)
( 1.79712058e+04,  4.63122300e-07)
( 1.80961199e+04,  4.63251576e-07)
( 1.82209204e+04,  3.02992888e-07)
( 1.83457442e+04,  2.66665103e-07)
( 1.84706531e+04,  2.65321851e-07)
( 1.85954944e+04,  2.65318132e-07)
( 1.87203265e+04,  2.22583020e-07)
( 1.88452312e+04,  2.18314940e-07)
( 1.89702388e+04,  2.18300854e-07)
( 1.90951339e+04,  2.08514943e-07)
( 1.92200679e+04,  2.08265910e-07)
( 1.93448846e+04,  2.08265929e-07)
( 1.94697814e+04,  2.08265943e-07)
( 1.95946793e+04,  2.06237591e-07)
( 1.97195555e+04,  2.06226558e-07)
( 1.98443776e+04,  2.06226560e-07)
( 1.99691652e+04,  2.05817745e-07)
( 2.00940032e+04,  2.05817308e-07)
( 2.02188574e+04,  2.05817308e-07)
( 2.03439381e+04,  2.05735419e-07)
( 2.04688459e+04,  2.05735402e-07)};\label{line:fhdm}

\end{axis}
\end{tikzpicture}
    \end{minipage}
    \hfill
    \begin{minipage}[t]{0.49\textwidth}
        \centering
        \tikzset{every picture/.style={scale=0.98}}
        \begin{tikzpicture}
\begin{axis}[
xmin=0,
ymode=log,
xlabel={time $(s)$},
ylabel={$||\cbm_i||_\infty$}]
\addplot [mark=*, mark size=2, mark options={solid}, thick, solid, blue, mark repeat=1]
coordinates {
( 0.00000000e+00,  2.54616700e-04)
( 1.47369189e+02,  2.73712524e-05)
( 2.87925775e+02,  1.53848250e-05)
( 8.38970946e+02,  1.31854998e-05)
( 1.48994787e+03,  5.67032935e-05)
( 2.10484250e+03,  7.09083663e-06)
( 2.35755089e+03,  7.04327490e-06)
( 2.72453808e+03,  7.04244130e-06)
( 2.97720871e+03,  6.51871365e-06)
( 3.49007434e+03,  6.08827942e-06)
( 3.74687223e+03,  5.85416924e-06)
( 4.00308884e+03,  5.47921035e-06)
( 4.13878587e+03,  2.14456124e-06)
( 5.02667745e+03,  1.35636628e-07)};\label{line:ceqp}

\addplot [mark=o, mark size=2, mark options={solid}, thick, dashed, red, mark repeat=1]
coordinates {
( 0.00000000e+00,  2.54616700e-04)
( 1.55085087e+02,  2.45679873e-05)
( 3.01007928e+02,  9.27315261e-06)
( 4.46587823e+02,  1.07449936e-05)
( 1.62712662e+03,  4.93546237e-05)
( 3.46455895e+03,  7.08130954e-06)
( 4.11507004e+03,  7.04993098e-06)
( 4.76437250e+03,  7.04222540e-06)
( 5.04230921e+03,  6.60381779e-06)
( 5.69235436e+03,  6.16920708e-06)
( 6.34200166e+03,  5.80245090e-06)
( 7.36254789e+03,  5.43369718e-06)
( 7.52111548e+03,  2.08445188e-06)
( 9.04514515e+03,  1.66811241e-08)};\label{line:crom}

\addplot [thick, solid, black]
coordinates {
( 1.22896869e+02,  0.00000000e+00)
( 1.06799565e+03,  0.00000000e+00)
( 1.19932398e+03,  0.00000000e+00)
( 1.32098372e+03,  0.00000000e+00)
( 1.43812967e+03,  0.00000000e+00)
( 1.55579032e+03,  0.00000000e+00)
( 1.67342566e+03,  0.00000000e+00)
( 1.79059177e+03,  0.00000000e+00)
( 1.90739485e+03,  0.00000000e+00)
( 2.02423345e+03,  0.00000000e+00)
( 2.15015503e+03,  0.00000000e+00)
( 2.27590696e+03,  0.00000000e+00)
( 2.40070099e+03,  0.00000000e+00)
( 2.52550812e+03,  0.00000000e+00)
( 2.65012963e+03,  0.00000000e+00)
( 2.77488475e+03,  0.00000000e+00)
( 2.89962137e+03,  0.00000000e+00)
( 3.02436618e+03,  0.00000000e+00)
( 3.14893007e+03,  0.00000000e+00)
( 3.27364043e+03,  0.00000000e+00)
( 3.40191371e+03,  0.00000000e+00)
( 3.53029760e+03,  0.00000000e+00)
( 3.65853409e+03,  0.00000000e+00)
( 3.78714573e+03,  0.00000000e+00)
( 3.91624603e+03,  0.00000000e+00)
( 4.04485389e+03,  0.00000000e+00)
( 4.17345000e+03,  0.00000000e+00)
( 4.30196766e+03,  0.00000000e+00)
( 4.43051719e+03,  0.00000000e+00)
( 4.55902992e+03,  0.00000000e+00)
( 4.68753402e+03,  0.00000000e+00)
( 4.81598306e+03,  0.00000000e+00)
( 4.94449532e+03,  0.00000000e+00)
( 5.07304974e+03,  0.00000000e+00)
( 5.20145196e+03,  0.00000000e+00)
( 5.32982366e+03,  0.00000000e+00)
( 5.45817850e+03,  0.00000000e+00)
( 5.58659537e+03,  0.00000000e+00)
( 5.71509484e+03,  0.00000000e+00)
( 5.84340581e+03,  0.00000000e+00)
( 5.97170617e+03,  0.00000000e+00)
( 6.10027403e+03,  0.00000000e+00)
( 6.22872834e+03,  0.00000000e+00)
( 6.35721110e+03,  0.00000000e+00)
( 6.48555724e+03,  0.00000000e+00)
( 6.61402312e+03,  0.00000000e+00)
( 6.74229497e+03,  0.00000000e+00)
( 6.87075389e+03,  0.00000000e+00)
( 6.99912469e+03,  0.00000000e+00)
( 7.12754256e+03,  0.00000000e+00)
( 7.25607015e+03,  0.00000000e+00)
( 7.38446190e+03,  0.00000000e+00)
( 7.51286242e+03,  0.00000000e+00)
( 7.64128943e+03,  0.00000000e+00)
( 7.76971719e+03,  0.00000000e+00)
( 7.89819098e+03,  0.00000000e+00)
( 8.02663548e+03,  0.00000000e+00)
( 8.15517042e+03,  0.00000000e+00)
( 8.28362833e+03,  0.00000000e+00)
( 8.41209695e+03,  0.00000000e+00)
( 8.54052767e+03,  0.00000000e+00)
( 8.66903643e+03,  0.00000000e+00)
( 8.79734534e+03,  0.00000000e+00)
( 8.92585742e+03,  0.00000000e+00)
( 9.05433894e+03,  0.00000000e+00)
( 9.18277367e+03,  0.00000000e+00)
( 9.31124429e+03,  0.00000000e+00)
( 9.43972234e+03,  0.00000000e+00)
( 9.56822342e+03,  0.00000000e+00)
( 9.69669575e+03,  0.00000000e+00)
( 9.82517140e+03,  0.00000000e+00)
( 9.95378089e+03,  0.00000000e+00)
( 1.00821964e+04,  0.00000000e+00)
( 1.02107705e+04,  0.00000000e+00)
( 1.03392187e+04,  0.00000000e+00)
( 1.04677542e+04,  0.00000000e+00)
( 1.05962014e+04,  0.00000000e+00)
( 1.07247322e+04,  0.00000000e+00)
( 1.08532869e+04,  0.00000000e+00)
( 1.09818725e+04,  0.00000000e+00)
( 1.11103119e+04,  0.00000000e+00)
( 1.12387809e+04,  0.00000000e+00)
( 1.13672436e+04,  0.00000000e+00)
( 1.14957869e+04,  0.00000000e+00)
( 1.16243141e+04,  0.00000000e+00)
( 1.17528924e+04,  0.00000000e+00)
( 1.18813664e+04,  0.00000000e+00)
( 1.20097777e+04,  0.00000000e+00)
( 1.21383105e+04,  0.00000000e+00)
( 1.22667609e+04,  0.00000000e+00)
( 1.23951525e+04,  0.00000000e+00)
( 1.25235539e+04,  0.00000000e+00)
( 1.26520451e+04,  0.00000000e+00)
( 1.27803694e+04,  0.00000000e+00)
( 1.29090158e+04,  0.00000000e+00)
( 1.30374128e+04,  0.00000000e+00)
( 1.31658461e+04,  0.00000000e+00)
( 1.32944803e+04,  0.00000000e+00)
( 1.34184993e+04,  0.00000000e+00)
( 1.35422898e+04,  0.00000000e+00)
( 1.36709578e+04,  0.00000000e+00)
( 1.37993906e+04,  0.00000000e+00)
( 1.39278250e+04,  0.00000000e+00)
( 1.40561505e+04,  0.00000000e+00)
( 1.41845574e+04,  0.00000000e+00)
( 1.43129085e+04,  0.00000000e+00)
( 1.44412862e+04,  0.00000000e+00)
( 1.45695984e+04,  0.00000000e+00)
( 1.46980009e+04,  0.00000000e+00)
( 1.48264080e+04,  0.00000000e+00)
( 1.49549191e+04,  0.00000000e+00)
( 1.50833869e+04,  0.00000000e+00)
( 1.52121715e+04,  0.00000000e+00)
( 1.53405770e+04,  0.00000000e+00)
( 1.54693184e+04,  0.00000000e+00)
( 1.55975756e+04,  0.00000000e+00)
( 1.57221539e+04,  0.00000000e+00)
( 1.58469055e+04,  0.00000000e+00)
( 1.59717817e+04,  0.00000000e+00)
( 1.60965048e+04,  0.00000000e+00)
( 1.62212168e+04,  0.00000000e+00)
( 1.63460100e+04,  0.00000000e+00)
( 1.64707656e+04,  0.00000000e+00)
( 1.65955451e+04,  0.00000000e+00)
( 1.67202874e+04,  0.00000000e+00)
( 1.68451763e+04,  0.00000000e+00)
( 1.69723014e+04,  0.00000000e+00)
( 1.70972224e+04,  0.00000000e+00)
( 1.72221524e+04,  0.00000000e+00)
( 1.73470224e+04,  0.00000000e+00)
( 1.74719131e+04,  0.00000000e+00)
( 1.75967376e+04,  0.00000000e+00)
( 1.77215688e+04,  0.00000000e+00)
( 1.78463554e+04,  0.00000000e+00)
( 1.79712058e+04,  0.00000000e+00)
( 1.80961199e+04,  0.00000000e+00)
( 1.82209204e+04,  0.00000000e+00)
( 1.83457442e+04,  0.00000000e+00)
( 1.84706531e+04,  0.00000000e+00)
( 1.85954944e+04,  0.00000000e+00)
( 1.87203265e+04,  0.00000000e+00)
( 1.88452312e+04,  0.00000000e+00)
( 1.89702388e+04,  0.00000000e+00)
( 1.90951339e+04,  0.00000000e+00)
( 1.92200679e+04,  0.00000000e+00)
( 1.93448846e+04,  0.00000000e+00)
( 1.94697814e+04,  0.00000000e+00)
( 1.95946793e+04,  0.00000000e+00)
( 1.97195555e+04,  0.00000000e+00)
( 1.98443776e+04,  0.00000000e+00)
( 1.99691652e+04,  0.00000000e+00)
( 2.00940032e+04,  0.00000000e+00)
( 2.02188574e+04,  0.00000000e+00)
( 2.03439381e+04,  0.00000000e+00)
( 2.04688459e+04,  0.00000000e+00)};\label{line:chdm}

\end{axis}
\end{tikzpicture}
    \end{minipage}
    \caption{Convergence history (only major iterations shown) of the objective value (\textit{left}) and constraint violation (\textit{right}) of the HDM-based interior point solver (\ref{line:fhdm}), ROM/TR method (\ref{line:from}), and EQP/TR method (\ref{line:feqp}) applied to the inverse design problem in (\ref{eqn:contin_inv}).}
    \label{fig:inv1_final_hist}
\end{figure}
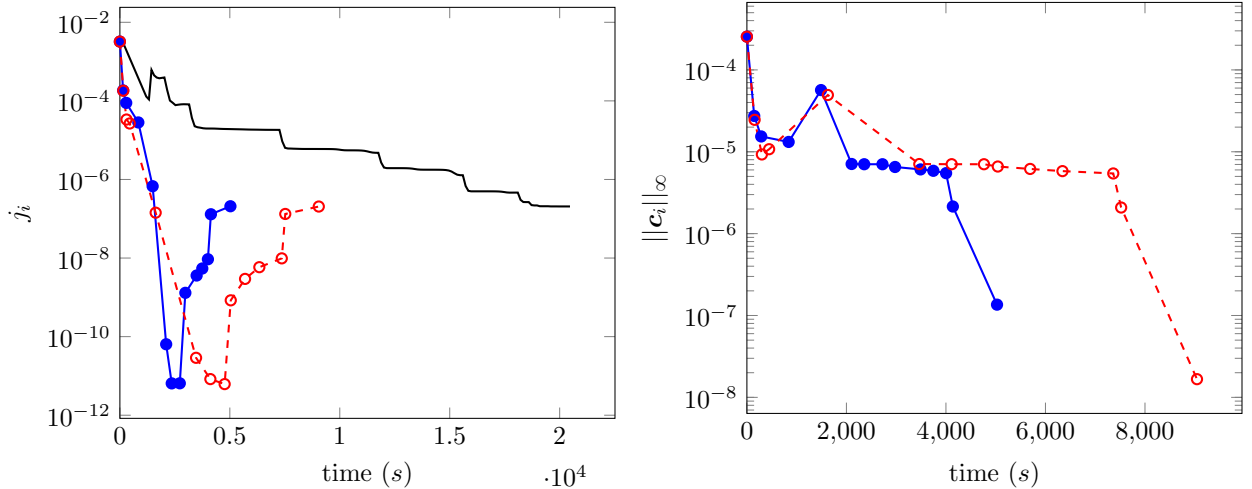

\begin{figure}
    \centering
    \begin{minipage}[t]{0.49\textwidth}
        \centering
        \includegraphics[width=\textwidth]{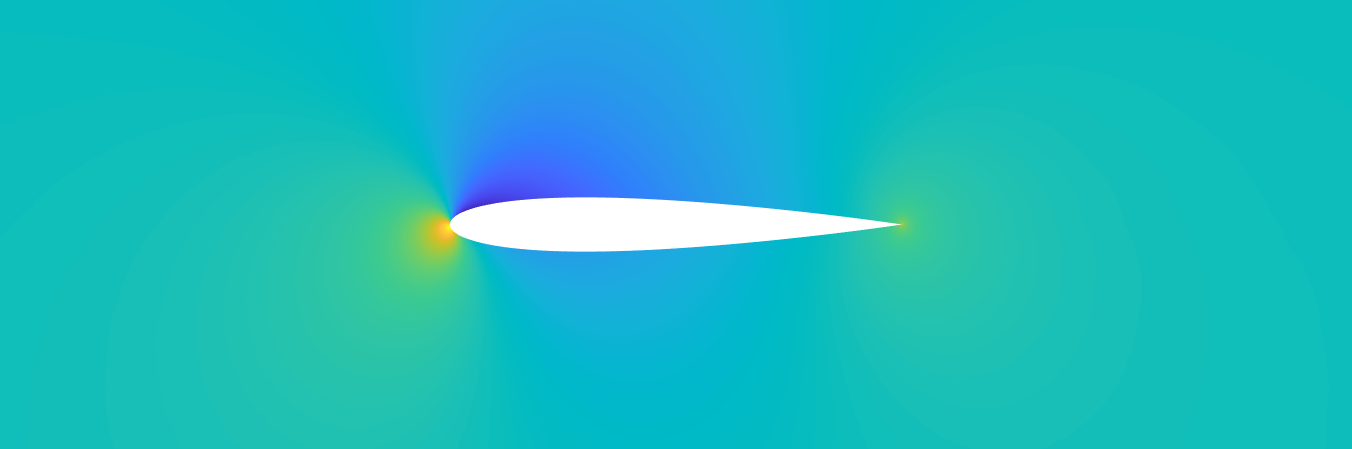}
    \end{minipage}
    \hfill
    \begin{minipage}[t]{0.49\textwidth}
        \centering
        \includegraphics[width=\textwidth]{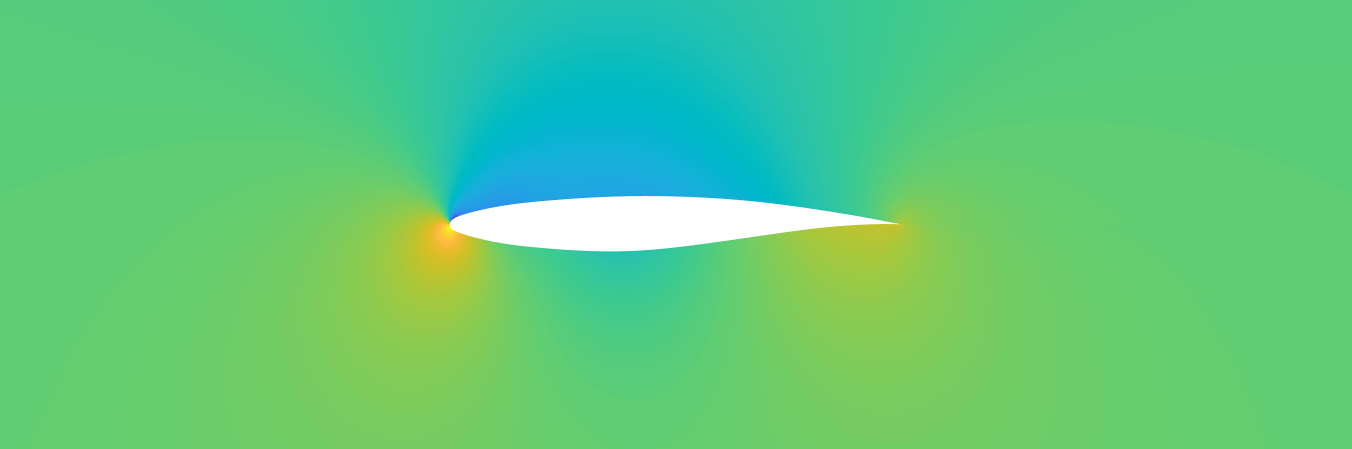}
    \end{minipage}
        \begin{minipage}[t]{0.49\textwidth}
        \centering
        \includegraphics[width=\textwidth]{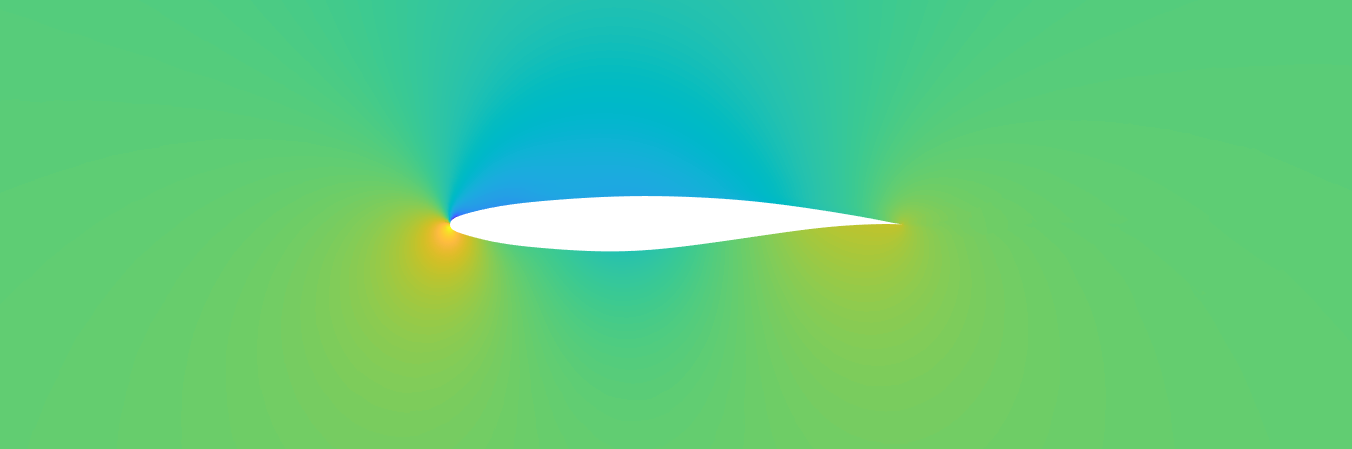}
    \end{minipage}
    \hfill
    \begin{minipage}[t]{0.49\textwidth}
        \centering
        \includegraphics[width=\textwidth]{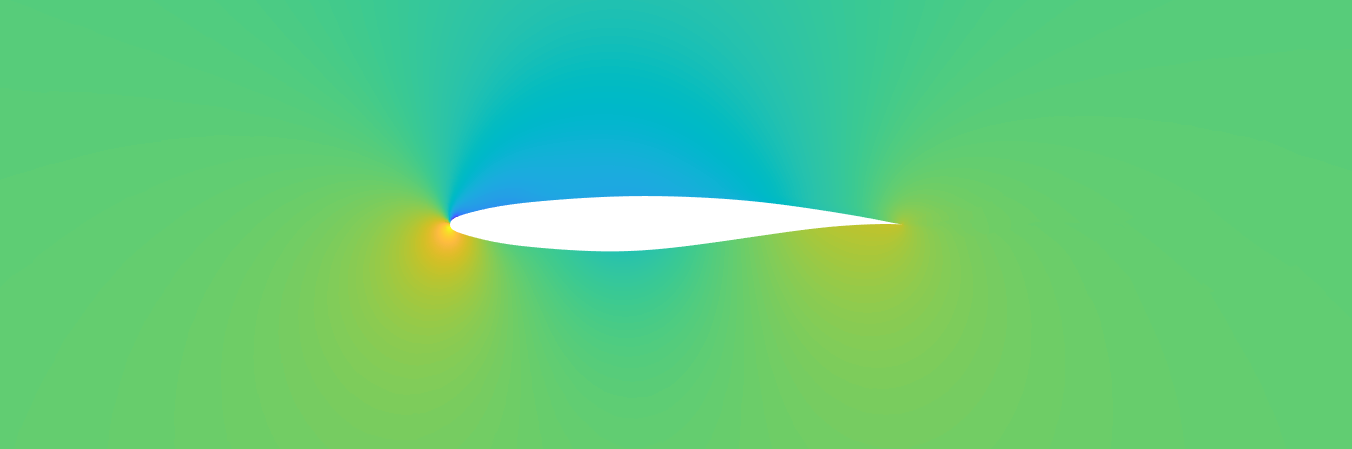}
    \end{minipage}
    \colorbarMatlabParula{0.79}{0.88}{0.96}{1.04}{1.12}
    \caption{The domain shape and density for the inverse design problem at the starting configuration (\textit{top-left}), HDM optimal solution (\textit{top-right}), ROM optimal solution (\textit{bottom-left}), and EQP optimal solution (\textit{bottom-right}).}
    \label{fig:inv1:optimal_soln}
\end{figure}
\begin{table}
\centering
\begin{tabular}{c c c c c c c} 
\hline
Method & $\epsilon$ & \# HDM & \# ROM &\# EQP & Cost(s) & Speedup\\ [0.5ex] 
\hline
\input{_dat/inv1/airfoil_inv1_con_tol_1.dat}
\end{tabular}
\caption{Performance comparison for various methods applied to the inverse design problem in (\ref{eqn:contin_inv}) for a constraint violation tolerance of $\norm{\cbm_i}_\infty \leq 10^{-4}$. The speedup is defined as the cost of a particular method divided by the cost of the HDM method at the same cutoff tolerance $\epsilon$. For the ROM/TR method, the objective value has already reached a tolerance of $\epsilon = 10^{-4}$ when the constraint violation first drops below the specified tolerance, which explains the identical performance for the $\epsilon = 10^{-3}$ and $\epsilon = 10^{-4}$ cutoffs.}
\label{tab:inv1:con_1e4}
\end{table}
\begin{table}
\centering
\begin{tabular}{c c c c c c c} 
\hline
Method & $\epsilon$ & \# HDM & \# ROM &\# EQP & Cost(s) & Speedup\\ [0.5ex] 
\hline
\input{_dat/inv1/airfoil_inv1_con_tol_2.dat}
\end{tabular}
\caption{Performance comparison for various methods applied to the inverse design problem in (\ref{eqn:contin_inv}) for a constraint violation tolerance of $\norm{\cbm_i}_\infty \leq 10^{-5}$. Speedup is defined in Table~\ref{tab:inv1:con_1e4}. The objective value has already reached a tolerance of $\epsilon = 10^{-4}$ when the constraint violation first drops below the specified tolerance, which explains the identical performance for ROM/TR and EQP/TR methods with $\epsilon = 10^{-3}$ and $\epsilon = 10^{-4}$ cutoffs.}
\label{tab:inv1:con_1e5}
\end{table}
\begin{table}
\centering
\begin{tabular}{c c c c c c c} 
\hline
Method & $\epsilon$ & \# HDM & \# ROM &\# EQP & Cost(s) & Speedup\\ [0.5ex] 
\hline
\input{_dat/inv1/airfoil_inv1_con_tol_3.dat}
\end{tabular}
\caption{Performance comparison for various methods applied to the inverse design problem in (\ref{eqn:contin_inv}) for a constraint violation tolerance of $\norm{\cbm_i}_\infty \leq 10^{-6}$. Speedup is defined in Table~\ref{tab:inv1:con_1e4}. The objective value has already reached a tolerance of $\epsilon = 10^{-6}$ when the constraint violation first drops below the specified tolerance, which explains the identical performance for the ROM/TR and EQP/TR methods across the values of $\epsilon$ considered.}
\label{tab:inv1:con_1e6}
\end{table}
\subsection{Inverse design of an airfoil in inviscid, subsonic flow with two side constraints}
\label{sec:numexp2}
We revisit the aerodynamic shape optimization problem from Section~\ref{sec:numexp1} with an additional side constraint on the lift, that is, we aim to construct an airfoil with a flow field as close to the RAE2822 as possible with a lower drag and larger lift. We discretize the shape of the airfoil with 18 Bezier control points and employ the same governing equations and discretization detailed in Section~\ref{sec:numexp1}, which leads to the following shape optimization problem 
\begin{equation}\label{airfoil:eqn:prob}
    \optconThree{U,\mubold}
    {\frac{1}{2}\int_{\Omega(\mubold)} \norm{U - U_\mathrm{RAE2822}}^2 ~ dV}
    {\Lcal(U;\mubold)=0}
    {D(U,\mubold) \leq 0.98 D_{\mathrm{RAE2822}}}
    {L(U,\mubold) \geq 1.01 L_{\mathrm{RAE2822}}}
\end{equation}
where $L(U,\mubold)$ is the lift of the airfoil with shape defined by the control points $\mubold$ and flow field $U$, $L_{\mathrm{RAE2822}} = L(U_\mathrm{RAE2822},\mubold_\mathrm{RAE2822})$ is the lift of the RAE2822 airfoil, and $\Lcal$ is the differential operator that includes the Euler equations (\ref{eqn:numexp:euler}) and appropriate boundary conditions. 

Table~\ref{fig:inv2_final_hist} presents the convergence history of the EQP/TR method at selected iterations with the best practices determined in Sections~\ref{sec:param}-\ref{sec:snap}, i.e., $\tau_0 = 10$, $a = 50$ and do not use snapshot from previous AL iterations. The method is converging to a feasible point with $\norm{\chi_i}_\infty = 8.0\times 10^{-6}$ in 14 major iterations. In this problem, we observed a smaller element usage (approx. $13\%$) at the starting point of each major iteration and similar maximum element usage (approx. $21\%$) compared with the previous problem in Section~\ref{sec:numexp1}. This implies element usage is not necessarily directly proportional to the number of side constraints.
\begin{table}[H]
\centering
\begin{tabular}{c c c c c c c} 
    \hline
    Iteration & $j_i$ & $||\cbm_i||_\infty$ & $||\chi_i||_\infty$ & $S_i$ & $||\rhobold_i||_{0,\mathrm{min}}$ $(\%)$ & $||\rhobold_i||_{0,\mathrm{max}}$ $(\%)$\\ [0.5ex]
    \hline
    \input{_dat/inv2/inv2_hist.dat}
\end{tabular}
\caption{Convergence history of the EQP/TR method applied to the inverse design problem in (\ref{airfoil:eqn:prob}), where $||\rhobold_i||_{0,\mathrm{min}}$ ($||\rhobold_i||_{0,\mathrm{max}}$) is the fewest (most) nonzero EQP weights across all trust-region iterations at major iteration $i$.}
\label{tab:inv2_hist}
\end{table}

Finally, we compare the performance of the HDM, ROM, and EQP methods in terms of the total computational cost required to satisfy $S_i < \epsilon$ for $\epsilon \in \{10^{-3},10^{-4}, 5 \times 10^{-5}\}$ and various constraint violation tolerances. Figure~\ref{fig:inv2_final_hist} shows the convergence history of the objective value $j_i$ and constrain violation $\norm{\cbm_i}_\infty$. Similar to the previous problem, both the ROM/TR and EQP/TR methods converge much faster than the HDM-based method with the EQP/TR being the fastest. Unlike the previous problem, the initial configuration is not feasible with respect to the side constraints so the HDM (interior-point) method includes feasibility restoration. For this problem, the reduction in constraint violation is comparable between the three methods; however, the ROM/TR and EQP/TR methods reduce the objective value much more rapidly. Tables~\ref{tab:inv2:con_1e4}-\ref{tab:inv2:con_1e6} show the performance of each method for thre constraint violation tolerances ($10^{-4}$, $10^{-5}$, $10^{-6}$), respectively. Similar to the previous problem, the ROM/TR and EQP/TR methods acheive the larstest speedup for loose constraint tolerances, where the EQP/TR speedup can be as large as $12.7\times$. For tighter constraint violation tolerances, the speedup of both methods is more modest ($< 2\times$), and the ROM/TR method can actually be more expensive than the HDM-based method. Figure~\ref{fig:inv2:optimal_soln} shows the initial and optimal shapes for the three methods considered.
\begin{figure}
    \centering
    \begin{minipage}[t]{0.49\textwidth}
        \centering
        \tikzset{every picture/.style={scale=0.98}}
        \begin{tikzpicture}
\begin{axis}[
xmin=0,
ymode=log,
xlabel={time $(s)$},
ylabel={$j_i$}]
\addplot [mark=*, mark size=2, mark options={solid}, thick, solid, blue, mark repeat=1]
coordinates {
( 0.00000000e+00,  3.22557827e-03)
( 1.42366350e+02,  3.07467275e-04)
( 2.81821028e+02,  2.82973368e-04)
( 7.06166995e+02,  9.99214554e-06)
( 1.32864774e+03,  4.12659586e-06)
( 1.46526193e+03,  4.27234261e-06)
( 2.06359751e+03,  4.30220246e-06)
( 2.41466996e+03,  4.31054140e-06)
( 2.78828088e+03,  4.31091644e-06)
( 3.18071673e+03,  4.31125035e-06)
( 4.47353141e+03,  4.31222502e-06)
( 4.74727183e+03,  4.31203869e-06)
( 5.90594441e+03,  4.31212874e-06)
( 6.65548103e+03,  4.32765340e-06)
( 8.79925601e+03,  4.36937441e-06)};\label{line:feqp}

\addplot [mark=o, mark size=2, mark options={solid}, thick, dashed, red, mark repeat=1]
coordinates {
( 0.00000000e+00,  3.22557827e-03)
( 1.50092922e+02,  1.40748242e-03)
( 1.39502069e+03,  2.14045655e-05)
( 1.55368553e+03,  7.93831023e-06)
( 2.67303204e+03,  4.21295665e-06)
( 2.82341377e+03,  4.30745324e-06)
( 3.49581536e+03,  4.30937343e-06)
( 5.59314546e+03,  4.31153801e-06)
( 6.31028270e+03,  4.31208022e-06)
( 7.01971251e+03,  4.31270206e-06)
( 9.06850557e+03,  4.31347507e-06)
( 9.74981957e+03,  4.31358929e-06)
( 1.24664202e+04,  4.31415424e-06)
( 1.44490367e+04,  4.33689109e-06)
( 1.57909705e+04,  4.36158629e-06)};\label{line:from}

\addplot [thick, solid, black]
coordinates {
( 1.20522517e+02,  3.22557827e-03)
( 5.32199104e+02,  7.47485338e-03)
( 9.73416930e+02,  2.34082942e-02)
( 1.38225005e+03,  3.07701383e-02)
( 1.75708921e+03,  2.16454973e-02)
( 2.11279769e+03,  1.51769863e-02)
( 2.44486340e+03,  1.04513727e-02)
( 2.59900820e+03,  2.91350160e-03)
( 2.73424886e+03,  2.38447446e-04)
( 3.00486638e+03,  2.50821132e-04)
( 3.27557261e+03,  2.88527857e-04)
( 3.54627025e+03,  3.45732569e-04)
( 3.69150593e+03,  4.74785009e-04)
( 3.83665927e+03,  5.06390613e-04)
( 3.97194146e+03,  3.72354940e-04)
( 4.10720774e+03,  1.96308044e-04)
( 4.24255341e+03,  2.04203760e-04)
( 4.37790063e+03,  1.28497156e-04)
( 4.51311882e+03,  8.36542157e-05)
( 4.64839181e+03,  8.19406257e-05)
( 4.91933413e+03,  7.56229411e-05)
( 5.18967063e+03,  7.19637585e-05)
( 5.46052702e+03,  7.10998110e-05)
( 5.73120948e+03,  6.91488266e-05)
( 5.86651276e+03,  6.90573303e-05)
( 6.00188972e+03,  6.93159989e-05)
( 6.13729657e+03,  6.95934806e-05)
( 6.27635712e+03,  6.99277341e-05)
( 6.41148771e+03,  7.00313032e-05)
( 6.54667105e+03,  6.96945522e-05)
( 6.68244840e+03,  6.91024327e-05)
( 6.81774087e+03,  6.85268804e-05)
( 7.08827818e+03,  6.80701284e-05)
( 7.36647373e+03,  6.79130060e-05)
( 7.50176798e+03,  6.80409711e-05)
( 7.63726169e+03,  4.39193045e-05)
( 7.76689762e+03,  3.37956270e-05)
( 7.89657504e+03,  3.26332250e-05)
( 8.02628531e+03,  2.97198019e-05)
( 8.28563602e+03,  2.81721508e-05)
( 8.41528176e+03,  2.79734758e-05)
( 8.54507285e+03,  2.79751778e-05)
( 8.67469512e+03,  2.79625312e-05)
( 8.93420524e+03,  2.78972336e-05)
( 9.19347314e+03,  2.78251723e-05)
( 9.32689034e+03,  1.79033548e-05)
( 9.46028032e+03,  1.31796826e-05)
( 9.59366215e+03,  1.29718477e-05)
( 9.72730712e+03,  1.29003529e-05)
( 9.86074618e+03,  1.28340388e-05)
( 9.99424191e+03,  1.27374912e-05)
( 1.01278043e+04,  1.26221828e-05)
( 1.02612844e+04,  1.25600839e-05)
( 1.03948596e+04,  1.25403234e-05)
( 1.05281641e+04,  1.25289737e-05)
( 1.06615438e+04,  1.25062356e-05)
( 1.07948296e+04,  1.24635429e-05)
( 1.09282149e+04,  1.23952806e-05)
( 1.10619499e+04,  1.23293546e-05)
( 1.11957658e+04,  1.22988850e-05)
( 1.13292044e+04,  1.22921446e-05)
( 1.14628252e+04,  1.22899351e-05)
( 1.15963822e+04,  1.22858426e-05)
( 1.17300700e+04,  1.22781822e-05)
( 1.18634402e+04,  1.22598734e-05)
( 1.19967929e+04,  1.22158409e-05)
( 1.21302699e+04,  1.21118761e-05)
( 1.22636936e+04,  1.19015996e-05)
( 1.23970929e+04,  1.16026230e-05)
( 1.25307601e+04,  1.13847977e-05)
( 1.26642364e+04,  1.13258692e-05)
( 1.27975555e+04,  1.13220191e-05)
( 1.29308695e+04,  1.13214929e-05)
( 1.30642617e+04,  1.13189347e-05)
( 1.31977376e+04,  1.13118253e-05)
( 1.33311143e+04,  1.12928803e-05)
( 1.34644743e+04,  1.12482337e-05)
( 1.35979543e+04,  1.11558924e-05)
( 1.37313375e+04,  1.10172526e-05)
( 1.38647979e+04,  1.09061617e-05)
( 1.39982659e+04,  1.08708661e-05)
( 1.41316667e+04,  7.28639542e-06)
( 1.42650882e+04,  6.18547360e-06)
( 1.43986516e+04,  6.09399326e-06)
( 1.45320084e+04,  6.09114930e-06)
( 1.46654404e+04,  6.09303118e-06)
( 1.47950570e+04,  5.11819044e-06)
( 1.49247977e+04,  4.83685223e-06)
( 1.50543348e+04,  4.82110764e-06)
( 1.51839171e+04,  4.82095873e-06)
( 1.53135610e+04,  4.82101406e-06)
( 1.54431492e+04,  4.82102509e-06)
( 1.55728443e+04,  4.82103486e-06)
( 1.57025867e+04,  4.82102602e-06)
( 1.58323713e+04,  4.82097533e-06)
( 1.59620942e+04,  4.82083522e-06)
( 1.60917482e+04,  4.82049078e-06)
( 1.62214255e+04,  4.81976300e-06)
( 1.63511335e+04,  4.81856817e-06)
( 1.64809789e+04,  4.81729802e-06)
( 1.66106586e+04,  4.81654032e-06)
( 1.67403973e+04,  4.81628512e-06)
( 1.68700185e+04,  4.54856024e-06)
( 1.69998829e+04,  4.49524987e-06)
( 1.71296184e+04,  4.49345736e-06)
( 1.72595447e+04,  4.49346215e-06)
( 1.73894288e+04,  4.42537857e-06)
( 1.75194392e+04,  4.41872701e-06)
( 1.76489903e+04,  4.41865807e-06)
( 1.77786936e+04,  4.41865815e-06)
( 1.79084981e+04,  4.40313952e-06)
( 1.80381711e+04,  4.40266352e-06)
( 1.81683620e+04,  4.40266361e-06)
( 1.82979776e+04,  4.39942494e-06)
( 1.84276529e+04,  4.39940380e-06)
( 1.85573425e+04,  4.39940380e-06)
( 1.90765629e+04,  4.39940380e-06)};\label{line:fhdm}

\end{axis}
\end{tikzpicture}
    \end{minipage}
    \hfill
    \begin{minipage}[t]{0.49\textwidth}
        \centering
        \tikzset{every picture/.style={scale=0.98}}
        \begin{tikzpicture}
\begin{axis}[
xmin=0,
ymode=log,
xlabel={time $(s)$},
ylabel={$||\cbm_i||_\infty$}]
\addplot [mark=*, mark size=2, mark options={solid}, thick, solid, blue, mark repeat=1]
coordinates {
( 0.00000000e+00,  4.92245076e-02)
( 1.42366350e+02,  3.04597947e-03)
( 2.81821028e+02,  2.89583483e-03)
( 7.06166995e+02,  2.88776204e-04)
( 1.32864774e+03,  7.80240696e-05)
( 1.46526193e+03,  1.62422675e-05)
( 2.06359751e+03,  4.48659223e-06)
( 2.41466996e+03,  2.46293164e-06)
( 2.78828088e+03,  2.44318826e-06)
( 3.18071673e+03,  2.41352165e-06)
( 4.47353141e+03,  2.34567615e-06)
( 4.74727183e+03,  2.31712442e-06)
( 5.90594441e+03,  2.29265963e-06)
( 6.65548103e+03,  1.64183498e-06)
( 8.79925601e+03,  4.50645321e-07)};\label{line:ceqp}

\addplot [mark=o, mark size=2, mark options={solid}, thick, dashed, red, mark repeat=1]
coordinates {
( 0.00000000e+00,  4.92245076e-02)
( 1.50092922e+02,  1.38839321e-03)
( 1.39502069e+03,  8.04289270e-04)
( 1.55368553e+03,  5.22105240e-04)
( 2.67303204e+03,  5.26255641e-05)
( 2.82341377e+03,  1.36169883e-05)
( 3.49581536e+03,  2.81027489e-06)
( 5.59314546e+03,  2.26597827e-06)
( 6.31028270e+03,  2.23266138e-06)
( 7.01971251e+03,  2.20290178e-06)
( 9.06850557e+03,  2.13998826e-06)
( 9.74981957e+03,  2.11956857e-06)
( 1.24664202e+04,  2.06766069e-06)
( 1.44490367e+04,  1.14265008e-06)
( 1.57909705e+04,  5.90603490e-07)};\label{line:crom}

\addplot [thick, solid, black]
coordinates {
( 1.20522517e+02,  4.92238491e-02)
( 5.32199104e+02,  6.34129167e-04)
( 9.73416930e+02,  1.38986238e-03)
( 1.38225005e+03,  1.69086542e-03)
( 1.75708921e+03,  1.31641827e-03)
( 2.11279769e+03,  1.02803030e-03)
( 2.44486340e+03,  7.95435898e-04)
( 2.59900820e+03,  3.36044044e-04)
( 2.73424886e+03,  0.00000000e+00)
( 3.00486638e+03,  3.39079625e-06)
( 3.27557261e+03,  4.12988300e-06)
( 3.54627025e+03,  0.00000000e+00)
( 3.69150593e+03,  0.00000000e+00)
( 3.83665927e+03,  0.00000000e+00)
( 3.97194146e+03,  0.00000000e+00)
( 4.10720774e+03,  0.00000000e+00)
( 4.24255341e+03,  0.00000000e+00)
( 4.37790063e+03,  0.00000000e+00)
( 4.51311882e+03,  0.00000000e+00)
( 4.64839181e+03,  0.00000000e+00)
( 4.91933413e+03,  0.00000000e+00)
( 5.18967063e+03,  0.00000000e+00)
( 5.46052702e+03,  0.00000000e+00)
( 5.73120948e+03,  0.00000000e+00)
( 5.86651276e+03,  0.00000000e+00)
( 6.00188972e+03,  0.00000000e+00)
( 6.13729657e+03,  0.00000000e+00)
( 6.27635712e+03,  0.00000000e+00)
( 6.41148771e+03,  0.00000000e+00)
( 6.54667105e+03,  0.00000000e+00)
( 6.68244840e+03,  0.00000000e+00)
( 6.81774087e+03,  0.00000000e+00)
( 7.08827818e+03,  0.00000000e+00)
( 7.36647373e+03,  0.00000000e+00)
( 7.50176798e+03,  0.00000000e+00)
( 7.63726169e+03,  0.00000000e+00)
( 7.76689762e+03,  0.00000000e+00)
( 7.89657504e+03,  0.00000000e+00)
( 8.02628531e+03,  0.00000000e+00)
( 8.28563602e+03,  0.00000000e+00)
( 8.41528176e+03,  0.00000000e+00)
( 8.54507285e+03,  0.00000000e+00)
( 8.67469512e+03,  0.00000000e+00)
( 8.93420524e+03,  0.00000000e+00)
( 9.19347314e+03,  0.00000000e+00)
( 9.32689034e+03,  0.00000000e+00)
( 9.46028032e+03,  0.00000000e+00)
( 9.59366215e+03,  0.00000000e+00)
( 9.72730712e+03,  0.00000000e+00)
( 9.86074618e+03,  0.00000000e+00)
( 9.99424191e+03,  0.00000000e+00)
( 1.01278043e+04,  0.00000000e+00)
( 1.02612844e+04,  0.00000000e+00)
( 1.03948596e+04,  0.00000000e+00)
( 1.05281641e+04,  0.00000000e+00)
( 1.06615438e+04,  0.00000000e+00)
( 1.07948296e+04,  0.00000000e+00)
( 1.09282149e+04,  0.00000000e+00)
( 1.10619499e+04,  0.00000000e+00)
( 1.11957658e+04,  0.00000000e+00)
( 1.13292044e+04,  0.00000000e+00)
( 1.14628252e+04,  0.00000000e+00)
( 1.15963822e+04,  0.00000000e+00)
( 1.17300700e+04,  0.00000000e+00)
( 1.18634402e+04,  0.00000000e+00)
( 1.19967929e+04,  0.00000000e+00)
( 1.21302699e+04,  0.00000000e+00)
( 1.22636936e+04,  0.00000000e+00)
( 1.23970929e+04,  0.00000000e+00)
( 1.25307601e+04,  0.00000000e+00)
( 1.26642364e+04,  0.00000000e+00)
( 1.27975555e+04,  0.00000000e+00)
( 1.29308695e+04,  0.00000000e+00)
( 1.30642617e+04,  0.00000000e+00)
( 1.31977376e+04,  0.00000000e+00)
( 1.33311143e+04,  0.00000000e+00)
( 1.34644743e+04,  0.00000000e+00)
( 1.35979543e+04,  0.00000000e+00)
( 1.37313375e+04,  0.00000000e+00)
( 1.38647979e+04,  0.00000000e+00)
( 1.39982659e+04,  0.00000000e+00)
( 1.41316667e+04,  0.00000000e+00)
( 1.42650882e+04,  0.00000000e+00)
( 1.43986516e+04,  0.00000000e+00)
( 1.45320084e+04,  0.00000000e+00)
( 1.46654404e+04,  0.00000000e+00)
( 1.47950570e+04,  0.00000000e+00)
( 1.49247977e+04,  0.00000000e+00)
( 1.50543348e+04,  0.00000000e+00)
( 1.51839171e+04,  0.00000000e+00)
( 1.53135610e+04,  0.00000000e+00)
( 1.54431492e+04,  0.00000000e+00)
( 1.55728443e+04,  0.00000000e+00)
( 1.57025867e+04,  0.00000000e+00)
( 1.58323713e+04,  0.00000000e+00)
( 1.59620942e+04,  0.00000000e+00)
( 1.60917482e+04,  0.00000000e+00)
( 1.62214255e+04,  0.00000000e+00)
( 1.63511335e+04,  0.00000000e+00)
( 1.64809789e+04,  0.00000000e+00)
( 1.66106586e+04,  0.00000000e+00)
( 1.67403973e+04,  0.00000000e+00)
( 1.68700185e+04,  0.00000000e+00)
( 1.69998829e+04,  0.00000000e+00)
( 1.71296184e+04,  0.00000000e+00)
( 1.72595447e+04,  0.00000000e+00)
( 1.73894288e+04,  0.00000000e+00)
( 1.75194392e+04,  0.00000000e+00)
( 1.76489903e+04,  0.00000000e+00)
( 1.77786936e+04,  0.00000000e+00)
( 1.79084981e+04,  0.00000000e+00)
( 1.80381711e+04,  0.00000000e+00)
( 1.81683620e+04,  0.00000000e+00)
( 1.82979776e+04,  0.00000000e+00)
( 1.84276529e+04,  0.00000000e+00)
( 1.85573425e+04,  0.00000000e+00)
( 1.90765629e+04,  0.00000000e+00)};\label{line:chdm}

\end{axis}
\end{tikzpicture}
    \end{minipage}
    \caption{Convergence history (only major iterations shown) of the objective value (\textit{left}) and constraint violation (\textit{right}) of the HDM-based interior point solver (\ref{line:fhdm}), ROM/TR method (\ref{line:from}), and EQP/TR method (\ref{line:feqp}) applied to the inverse design problem in (\ref{airfoil:eqn:prob}).}
    \label{fig:inv2_final_hist}
\end{figure}
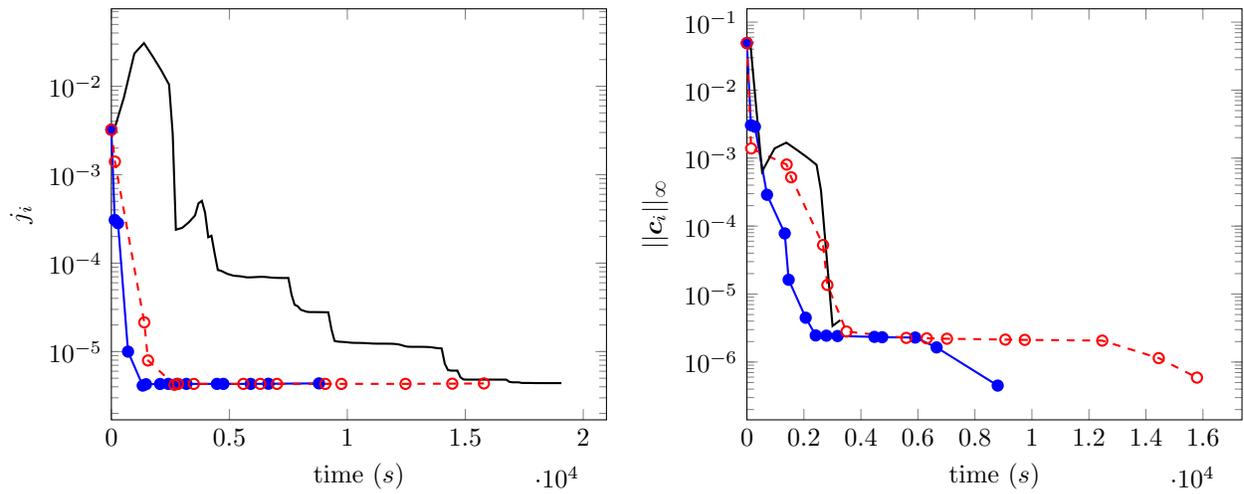
\begin{figure}
    \centering
    \begin{minipage}[t]{0.49\textwidth}
        \centering
        \includegraphics[width=\textwidth]{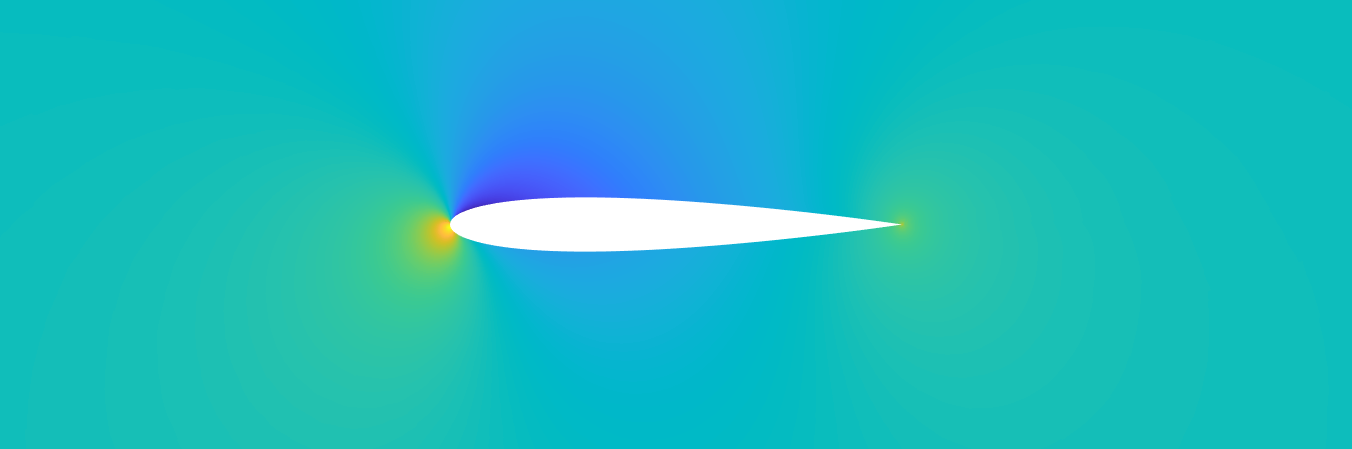}
    \end{minipage}
    \hfill
    \begin{minipage}[t]{0.49\textwidth}
        \centering
        \includegraphics[width=\textwidth]{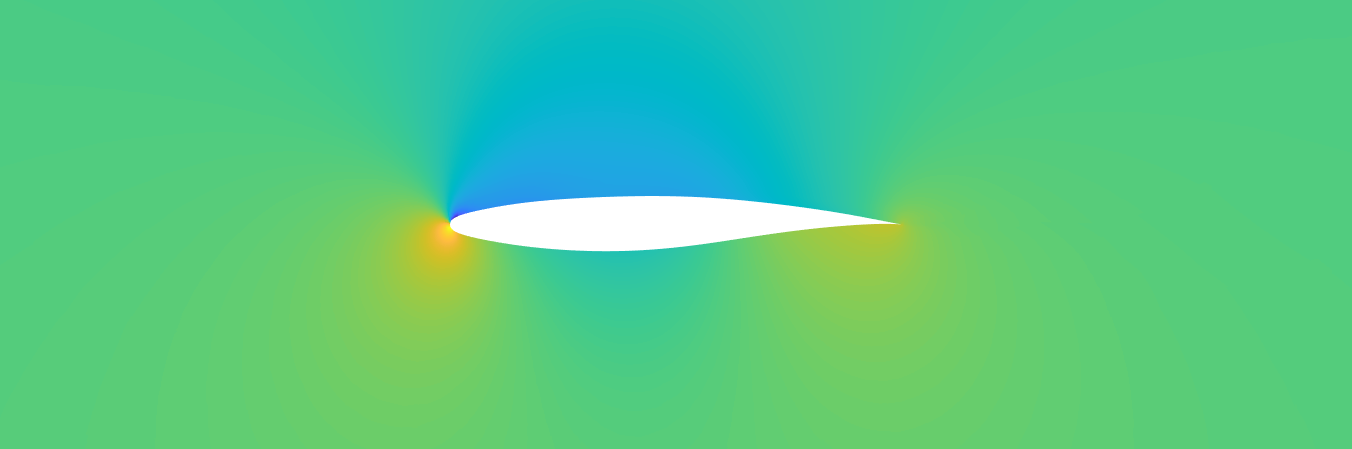}
    \end{minipage}
        \begin{minipage}[t]{0.49\textwidth}
        \centering
        \includegraphics[width=\textwidth]{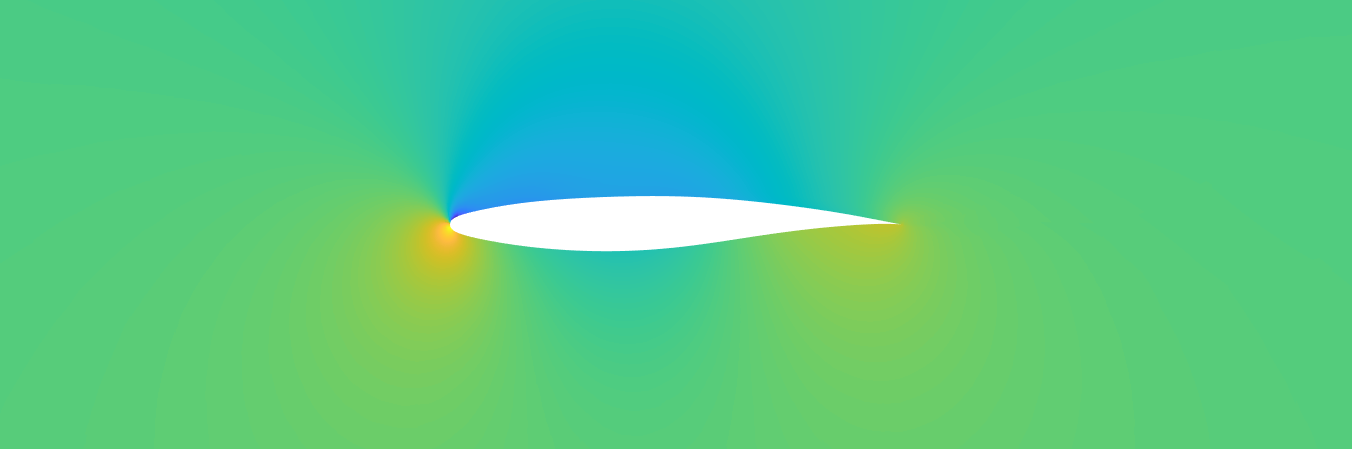}
    \end{minipage}
    \hfill
    \begin{minipage}[t]{0.49\textwidth}
        \centering
        \includegraphics[width=\textwidth]{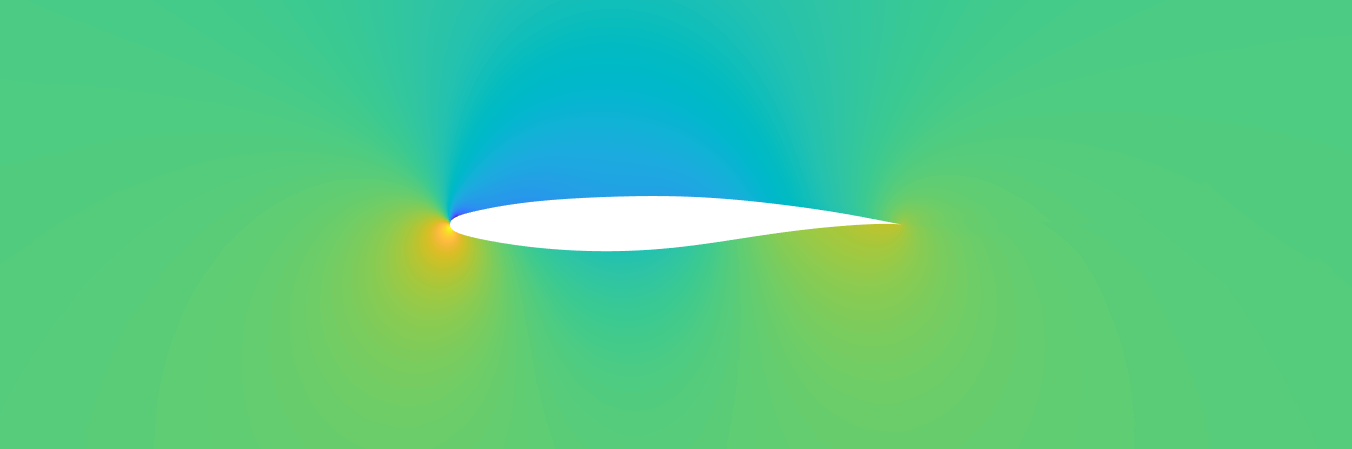}
    \end{minipage}
    \colorbarMatlabParula{0.81}{0.89}{0.97}{1.05}{1.12}
    \caption{The domain shape and density for the inverse design problem at the starting configuration (\textit{top-left}), HDM optimal solution (\textit{top-right}), ROM optimal solution (\textit{bottom-left}), and EQP optimal solution (\textit{bottom-right}).}
    \label{fig:inv2:optimal_soln}
\end{figure}
\begin{table}[H]
\centering
\begin{tabular}{c c c c c c c} 
\hline
Method & $\epsilon$ & \# HDM & \# ROM &\# EQP & Cost(s) & Speedup\\ [0.5ex] 
\hline
\input{_dat/inv2/airfoil_inv2_con_tol_1.dat}
\end{tabular}
\caption{Performance comparison for various methods applied to the inverse design problem in (\ref{airfoil:eqn:prob}) for a constraint violation tolerance of $\norm{\cbm_i}_\infty \leq 10^{-4}$. Speedup is defined in Table~\ref{tab:inv1:con_1e4}. The objective value has already reached a tolerance of $\epsilon = 10^{-4}$ when the constraint violation first drops below the specified tolerance, which explains the identical performance for ROM/TR and EQP/TR methods with $\epsilon = 10^{-3}$ and $\epsilon = 10^{-4}$ cutoffs.}
\label{tab:inv2:con_1e4}
\end{table}
\begin{table}[H]
\centering
\begin{tabular}{c c c c c c c} 
\hline
Method & $\epsilon$ & \# HDM & \# ROM &\# EQP & Cost(s) & Speedup\\ [0.5ex] 
\hline
\input{_dat/inv2/airfoil_inv2_con_tol_2.dat}
\end{tabular}
\caption{Performance comparison for various methods applied to the inverse design problem in (\ref{airfoil:eqn:prob}) for a constraint violation tolerance of $\norm{\cbm_i}_\infty \leq 10^{-5}$. Speedup is defined in Table~\ref{tab:inv1:con_1e4}. The objective value has already reached a tolerance of $\epsilon = 10^{-4}$ when the constraint violation first drops below the specified tolerance, which explains the identical performance for the ROM/TR and EQP/TR methods across the values of $\epsilon$ considered.}
\label{tab:inv2:con_1e5}
\end{table}
\begin{table}[H]
\centering
\begin{tabular}{c c c c c c c} 
\hline
Method & $\epsilon$ & \# HDM & \# ROM &\# EQP & Cost(s) & Speedup\\ [0.5ex] 
\hline
\input{_dat/inv2/airfoil_inv2_con_tol_3.dat}
\end{tabular}
\caption{Performance comparison for various methods applied to the inverse design problem in (\ref{airfoil:eqn:prob}) for a constraint violation tolerance of $\norm{\cbm_i}_\infty \leq 10^{-6}$. Speedup is defined in Table~\ref{tab:inv1:con_1e4}. The objective value has already reached a tolerance of $\epsilon = 5\times 10^{-5}$ when the constraint violation first drops below the specified tolerance, which explains the identical performance for the ROM/TR and EQP/TR methods across the values of $\epsilon$ considered.}
\label{tab:inv2:con_1e6}
\end{table}

\section{Conclusion}
\label{sec:concl}
In this work, we introduced an augmented Lagrangian trust-region method to efficiently solve constrained optimization problems governed by large-scale nonlinear systems. At each major augmented Lagrangian iteration, the expensive optimization subproblem involving the full nonlinear system is replaced by an EQP-based hyperreduced model (Section~\ref{sec:hyperreduction}) constructed on-the-fly. Convergence to a critical point of an augmented Lagrangian subproblem is guaranteed by a novel bound-constrained trust-region method that allows for inexact gradient evaluations that we developed (Section~\ref{sec:trammo:tr_inexact}) and specialized to our specific setting that leverages hyperreduced models (Section~\ref{sec:eqp_trammo}). The approach circumvents a traditional training phase because the EQP models are built on-the-fly in accordance with the requirements of the trust-region convergence theory.

In our previous work \cite{wen_globally_2023}, we carefully studied the impact
of numerous user-defined parameter introduced by the trust-region
framework with inexact gradient evaluations. In this work, we
studied the impact of the user-defined augmented Lagrangian
parameters $(\tau_0, a)$ and found they do not impact convergence,
but have a large influence on convergence rate. For the problems
considered in this work, we found $\tau_0 \in [10, 25]$ and $a \in [10, 100]$
lead to favorable convergence rates.
We also found that inheriting primal and dual solutions from previous major iterations
does not necessarily improve the convergence of the proposed method, but may be useful
in other problems or situations (e.g., unsteady problems).
Using these empirical rules, the proposed EQP/BTR method demonstrated speedups
of up to about 12.7x relative to a popular interior-point optimization method
using only HDM evaluations, even when accounting for all sources of computational
cost, i.e., HDM evaluations for snapshot generation and trust-region assessment,
reduced basis construction, EQP training, trust-region subproblem solves, etc.

HDM evaluations dominate the cost of the proposed EQP/BTR method. This suggests reduced models with improved prediction potential that could reduce the overall number of HDM evaluations would lead to future computational speedup. Additionally, there is likely overhead associated with the nested augmented Lagrangian approach and further speedup may be available from a trust-region method that directly handles the constrained problem (instead of a bound-constrained subproblem). Other interesting research directions include the extension to unsteady problems, the extension to quadrature-based (instead of element-based) EQP \cite{du_adaptive_2021,dua2022efficient} for integration with higher order finite element discretization, and using the methodology to solve relevant engineering problems.

\appendix
\section{Proof of global convergence of the inexact, box-constrained trust-region method}
\label{sec:tr-proofs}
In this appendix, we revisit the proofs of the global convergence for the inexact, box-constrained trust-region framework, and they directly follow those set forth in \cite{kouri_trust-region_2013, yano_globally_2021}. 
\begin{lemma} \label{lemma:delta-zero}
Suppose Assumptions~\ref{assum:at:tr_inexact} hold and there
exists $\epsilon>0$ and $K > 0$ such that $\chi(\mubold_k) \geq \epsilon$ for all $k > K$.
Then the sequence of trust-region radii $\{\Delta_k\}$
produced by Algorithm~\ref{alg:tr_hyp} satisfies
\begin{equation*}
 \sum_{k=1}^\infty \Delta_k < \infty.
\end{equation*}
\begin{proof}


We first consider the case with an infinite sequence of successful iterations $\{k_i\}$ with the new gradient condition in \ref{assum:at:cauchy}. In this case, for all $i$ such that $k_i > K$,
\begin{equation*}
 \begin{aligned}
  F(\mubold_{k_i}) - F(\mubold_{k_{i+1}}) &\geq
  F(\mubold_{k_i}) - F(\mubold_{k_{i}+1})  =
  F(\mubold_{k_i}) - F(\check\mubold_{k_i})
  \geq \eta_1(m_{k_i}(\mubold_{k_i}) - m_{k_i}(\check\mubold_{k_i})) \\
  &\geq \eta_1\kappa\norm{\chi_m(\mubold_{k_i})}
      \min\bracket{\frac{\norm{\chi_m(\mubold_{k_i})}}{\beta_k},~
                 \Delta_{k_i}}
  \geq \eta_1\kappa\epsilon
      \min\bracket{\frac{\epsilon}{\beta_k},~
                 \Delta_{k_i}},
 \end{aligned}
\end{equation*}
for some constant $\kappa\in(0,1)$. The subscript for the second $\mubold$ in the first and second expressions are $k_{i+1}$ and $k_i + 1$, respectively.
Due to the step acceptance condition in Algorithm~\ref{alg:tr_inexact}, the sequence $\{F(\mubold_k)\}$ is non-increasing, and thus the first inequality holds. The first equality holds because iteration $k_i$ is successful (by construction). The remaining inequalities
follow from the step acceptance condition in Algorithm~\ref{alg:tr_inexact}, the fraction of Cauchy decrease, and the assumption that
$\chi_m(\mubold_{k_i}) \geq \epsilon$ for all $k_i > K$.

Now, we sum over all $i > I$ for $k_I = K$ to yield
\begin{equation*}
\eta_1\kappa\epsilon\sum_{i \geq I}
\min\bracket{
\frac{\epsilon}{\beta_{k_i}},
~\Delta_{k_i}}
\leq \sum_{i \geq I} (F(\mubold_{k_i}) - F(\mubold_{k_{i+1}}) )
= F(\mubold_{k_I}) - \lim_{i \rightarrow \infty} F(\mubold_{k_i}) < \infty,
\end{equation*}
where the equality follows from the telescoping series, and the finiteness of the limit follows from $F$ being bounded below. Because $\epsilon/\beta_k$ is bounded away from zero, the inequality above implies that $\sum_{i=1}^\infty \Delta_{k_i} < \infty$. The result also holds for a finite number of successful iterations (Lemma A.2 of \cite{kouri_trust-region_2013}), and the desired result follows.


\end{proof}
\end{lemma}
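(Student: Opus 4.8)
The plan is to follow the classical trust-region bookkeeping argument, adapted to the projected-gradient criticality measure and the inexact gradient condition \ref{assum:at:grad}. First I would convert the hypothesis on the true criticality measure $\chi$ into one on the model criticality measure $\chi_m$. Since the Euclidean projection $\Pbm_\Ccal(\cdot,\mubold_l,\mubold_u)$ is non-expansive, the definitions (\ref{eqn:chi_f}) and (\ref{eqn:chi_m}) give $\norm{\chi(\mubold_k)-\chi_m(\mubold_k)}\leq\norm{\nabla F(\mubold_k)-\nabla m_k(\mubold_k)}$, and \ref{assum:at:grad} bounds the right-hand side by $\xi\norm{\chi_m(\mubold_k)}$. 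A triangle inequality then yields $\norm{\chi_m(\mubold_k)}\geq\epsilon/(1+\xi)=:\epsilon'$ for all $k>K$, so the model criticality is uniformly bounded away from zero as well.

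Next I would establish a uniform lower bound on the predicted decrease at every iteration $k>K$. Combining the fraction of Cauchy decrease \ref{assum:at:cauchy} in (\ref{eqn:cauchy_decrease}) with the curvature bound $\beta_k\leq\beta$ from \ref{assum:at:curvature} and the bound $\norm{\chi_m(\mubold_k)}\geq\epsilon'$, I obtain $m_k(\mubold_k)-m_k(\check\mubold_k)\geq\kappa\epsilon'\min\curlyb{\epsilon'/\beta,\Delta_k}$. This is the engine of the argument: at a successful iteration ($\varrho_k\geq\eta_1$) the actual decrease inherits the same lower bound up to the factor $\eta_1$.

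I would then split into the two standard cases. If there are infinitely many successful iterations $\curlyb{k_i}$, I sum the decrease inequality over successful indices $k_i>K$; since $F$ is non-increasing along the iterates (unsuccessful steps leave $\mubold$ fixed) and bounded below by \ref{assum:at:f}, the telescoping series $\sum_i\paren{F(\mubold_{k_i})-F(\mubold_{k_{i+1}})}$ converges, forcing $\sum_i\min\curlyb{\epsilon'/\beta,\Delta_{k_i}}<\infty$ and hence $\sum_i\Delta_{k_i}<\infty$. To pass from successful iterations to \emph{all} iterations, I would use that each unsuccessful step contracts the radius by at least $\gamma_2<1$ (Algorithm~\ref{alg:tr_hyp}), so the radii within a maximal block of consecutive unsuccessful iterations are dominated by a convergent geometric series anchored at the preceding radius; summing these geometric contributions against $\sum_i\Delta_{k_i}<\infty$ gives $\sum_k\Delta_k<\infty$. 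The complementary case of finitely many successful iterations reduces to Lemma~A.2 of \cite{kouri_trust-region_2013}: after the last successful step every iteration contracts the radius geometrically, so $\sum_k\Delta_k<\infty$ immediately.

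The main obstacle I anticipate is the final passage from the successful-iteration sum to the full sum, because the radius-update rule permits expansion up to $\Delta_\mathrm{max}$ after a very successful step; care is needed to verify that such expansions are controlled, using that they occur only at successful iterations (already accounted for in $\sum_i\Delta_{k_i}$) while each ensuing block of rejected steps decays geometrically. The relationship between $\chi$ and $\chi_m$ in the first step is routine but must be carried out explicitly, since the hypothesis is stated for $\chi$ whereas the Cauchy decrease is stated in terms of $\chi_m$.
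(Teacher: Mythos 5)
Your proposal is correct and follows essentially the same route as the paper: a fraction-of-Cauchy-decrease lower bound on the actual reduction at successful iterations, telescoped against the lower bound on $F$ from \ref{assum:at:f}, followed by geometric contraction of the radii over blocks of unsuccessful iterations, with the case of finitely many successful iterations handled by Lemma~A.2 of \cite{kouri_trust-region_2013}. The two points you single out for extra care --- converting the hypothesis $\norm{\chi(\mubold_k)}\geq\epsilon$ into $\norm{\chi_m(\mubold_k)}\geq\epsilon/(1+\xi)$ via non-expansiveness of the projection together with \ref{assum:at:grad}, and controlling the radii in the unsuccessful blocks (including the possible expansion toward $\Delta_\mathrm{max}$ after a very successful step) --- are precisely the details the paper's proof leaves implicit, since it silently substitutes $\norm{\chi_m(\mubold_{k_i})}\geq\epsilon$ for the stated hypothesis and closes with ``the desired result follows,'' so your treatment is, if anything, the more careful of the two.
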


\begin{lemma} \label{lemma:rho-one}
  Suppose Assumptions~\ref{assum:at:tr_inexact} hold and there exists $\epsilon>0$ and $K > 0$ such that
$\norm{\chi_m(\mubold_k)} \geq \epsilon$ for all $k > K$. Then the ratios
  $\{\varrho_k\}$ produced by Algorithm~\ref{alg:tr_inexact} converges to one.
  
\begin{proof}
By Taylor's theorem and Assumption~\ref{assum:at:tr_inexact}, we have
\begin{equation}
\abs{\mathrm{ared}_k-\mathrm{pred}_k}
\leq \norm{\sbm_k}\norm{\nabla F(\mubold_k) - \nabla m_k(\mubold_k)} + \frac{1}{2}(\zeta_1+\zeta_2-1)\norm{\sbm_k}^2,
\end{equation}
where $\sbm_k = \mubold-\mubold_k$ is the step from the current trust-region center $\mubold_k$. For all $k > K$, the gradient error bound (Assumption~\ref{assum:at:grad}) and the result of Lemma~\ref{lemma:delta-zero} ($\Delta_k \rightarrow 0$) imply
\begin{equation}
\abs{\mathrm{ared}_k-\mathrm{pred}_k} \leq \xi\Delta_k^2 + \frac{1}{2}(\kappa_1+\kappa_2-1)\Delta_k^2.
\end{equation}
Additionally, for $k > K$, the fraction of Cauchy decrease condition and
the result of Lemma~\ref{lemma:delta-zero} ($\Delta_k \rightarrow 0$) imply
\begin{equation}
\mathrm{pred}_k 
\geq \kappa \norm{\chi_m(\mubold_k)}\min\bracket{\frac{\norm{\chi_m(\mubold_k)}}{\beta_k}, \Delta_k}
\geq \kappa \epsilon \Delta_k.
\end{equation}
Combining these results yields
\begin{equation}
\abs{\varrho_{k}-1} 
= \abs{\frac{\mathrm{ared}_k}{\mathrm{pred}_k}-1}
\leq \frac{\xi\Delta_k+\frac{1}{2}(\zeta_1+\zeta_2-1)\Delta_k}{\kappa\epsilon}.
\end{equation}
Thus, as $\Delta_k \rightarrow 0$, $\varrho_k$ approaches $1$, which is the desired result.
\end{proof}
\end{lemma}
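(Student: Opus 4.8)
The plan is to prove that $\varrho_k \to 1$ by writing $\varrho_k - 1 = (\mathrm{ared}_k - \mathrm{pred}_k)/\mathrm{pred}_k$ and controlling the numerator and denominator separately, the whole argument being driven by the fact that the standing hypothesis $\norm{\chi_m(\mubold_k)} \geq \epsilon$ forces the trust-region radius to zero. First I would invoke Lemma~\ref{lemma:delta-zero}: because the criticality measure is bounded away from zero for all $k > K$, the radii are summable and therefore $\Delta_k \to 0$. This is the structural ingredient that makes both the numerator small and the denominator controllable.

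Next I would bound the mismatch $\abs{\mathrm{ared}_k - \mathrm{pred}_k}$ by Taylor's theorem. Writing $\sbm_k = \check\mubold_k - \mubold_k$ for the candidate step, expanding $F$ and the quadratic model $m_k$ about $\mubold_k$ and subtracting, the constant terms cancel, the linear terms leave the gradient discrepancy $\norm{\nabla F(\mubold_k) - \nabla m_k(\mubold_k)}$, and the quadratic remainders are bounded by the uniform Hessian estimates in \ref{assum:at:hession}. This yields $\abs{\mathrm{ared}_k - \mathrm{pred}_k} \leq \norm{\sbm_k}\,\norm{\nabla F(\mubold_k) - \nabla m_k(\mubold_k)} + \tfrac{1}{2}(\zeta_1 + \zeta_2 - 1)\norm{\sbm_k}^2$. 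I would then substitute the inexact gradient condition \ref{assum:at:grad}, which gives $\norm{\nabla F(\mubold_k) - \nabla m_k(\mubold_k)} \leq \xi \min\{\norm{\chi_m(\mubold_k)}, \Delta_k\} \leq \xi \Delta_k$, together with the step bound $\norm{\sbm_k} \leq \Delta_k$, to obtain the clean estimate $\abs{\mathrm{ared}_k - \mathrm{pred}_k} \leq \bigl(\xi + \tfrac{1}{2}(\zeta_1 + \zeta_2 - 1)\bigr)\Delta_k^2$.

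For the denominator I would lower-bound the predicted reduction by the fraction of Cauchy decrease \ref{assum:at:cauchy}, $\mathrm{pred}_k \geq \kappa \norm{\chi_m(\mubold_k)} \min\{\norm{\chi_m(\mubold_k)}/\beta_k, \Delta_k\}$. Here the hypothesis $\norm{\chi_m(\mubold_k)} \geq \epsilon$ and the uniform curvature bound $\beta_k \leq \beta$ from \ref{assum:at:curvature} are what I need: since $\Delta_k \to 0$, eventually $\Delta_k \leq \epsilon/\beta$, so the minimum is attained at $\Delta_k$ and $\mathrm{pred}_k \geq \kappa \epsilon \Delta_k$. Dividing the two bounds gives $\abs{\varrho_k - 1} \leq \bigl(\xi + \tfrac{1}{2}(\zeta_1 + \zeta_2 - 1)\bigr)\Delta_k/(\kappa \epsilon)$, and $\Delta_k \to 0$ delivers the result.

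The main obstacle is bookkeeping rather than a deep idea: I must make sure the Taylor remainder is expressed consistently against the model Hessian bound $\zeta_2 - 1$ stated in \ref{assum:at:hession}, and above all justify \emph{uniformly} that the minimum in the Cauchy-decrease bound switches to its $\Delta_k$ branch. This is precisely where \ref{assum:at:curvature} is indispensable, since without a uniform upper bound on $\beta_k$ the threshold $\epsilon/\beta_k$ could degenerate and the lower bound $\mathrm{pred}_k \geq \kappa\epsilon\Delta_k$ would fail. A secondary check is that the candidate step satisfies $\norm{\sbm_k} \leq \Delta_k$ for the $\ell^\infty$ trust-region norm used by the subproblem solver, so that the quadratic-in-$\Delta_k$ estimate on the numerator is legitimate.
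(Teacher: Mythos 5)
Your proposal is correct and follows essentially the same route as the paper's proof: Lemma~\ref{lemma:delta-zero} to force $\Delta_k \to 0$, the Taylor/Hessian bound on $\abs{\mathrm{ared}_k - \mathrm{pred}_k}$, the inexact-gradient condition \ref{assum:at:grad} to make the numerator $O(\Delta_k^2)$, and the Cauchy-decrease bound to get $\mathrm{pred}_k \geq \kappa\epsilon\Delta_k$. If anything, you are slightly more careful than the paper in two spots: you identify $\sbm_k$ explicitly as the candidate step $\check\mubold_k - \mubold_k$, and you spell out that the uniform curvature bound $\beta_k \leq \beta$ from \ref{assum:at:curvature} is what guarantees the minimum in the Cauchy bound eventually lands on the $\Delta_k$ branch.
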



\begin{proof}[{Proof of Theorem~\ref{thm:xi_m-zero}}]
We prove by contradiction. Suppose there exists $\epsilon > 0$ such that $\chi_m(\mubold_k) > \epsilon$ for sufficiently large $k > K > 0$. By Lemma~\ref{lemma:rho-one}, there exists $K'' \geq K$ such that, for all $k > K''$, $\varrho_k$ is sufficiently close to $1$ and the corresponding step is successful. From Algorithm~\ref{alg:tr_hyp}, this implies $\Delta_{K''} \leq \Delta_k \leq \Delta_{\mathrm{max}}$, which contradicts Lemma~\ref{lemma:delta-zero}. Hence, $\liminf_{k \rightarrow \infty} \norm{\chi_m(\mubold_k)} = 0$. Next, we have
\begin{equation}
\norm{\chi_m(\mubold_k)-\chi(\mubold_k)} \leq \norm{\nabla m_k(\mubold_k)-\nabla f(\mubold_k)} \leq \xi \min\curlyb{\norm{\chi_m(\mubold_k)}, \Delta_k}
\end{equation}
where the first inequality is a property of the Euclidean projection and the second inequality follows from the gradient error bound. This leads to the desired result when combined with the previous result, $\liminf_{k \rightarrow \infty} \norm{\chi_m(\mubold_k)} = 0$.
\end{proof}

\section{Regularity and boundedness assumptions}
\label{sec:appendix_A}
We begin by stating a series of regularity and boundedness assumptions on both the HDM and hyperreduced model. These
assumptions were introduced in previous work \cite{wen_globally_2023,zahr_efficient_2019} and will be used to derive residual-based error estimates. We also suppress the AL variables ($\thetabold$ and $\tau$) because they are fixed in the trust-region framework.
\begin{assume}\label{assum:hdm}
Consider any open, bounded subset $\Ucal\subset\Rbb^{N_\ubm}$. We assume the HDM
 residual function in (\ref{eqn:hdm_res}) and AL function in (\ref{eqn:hdm_elem_qoi}) satisfy the following:
\begin{enumerate}[label=\textbf{(AH\arabic*)}]
\item\label{ah:r:differ} $\rbm$ is continuously differentiable with respect to both arguments on the domain $\Ucal\times\Dcal$.
\item\label{ah:l:differ} $\ell$ is continuously differentiable with respect to both arguments on the domain $\Ucal\times\Dcal$.
\item\label{ah:l:lip} $\ell$ is Lipschitz continuous with respect to its first argument on the domain $\Ucal\times\Dcal$.
\item\label{ah:c:differ} $\cbm$ is continuously differentiable with respect to both arguments on the domain $\Ucal\times\Dcal$.
\item\label{ah:c:val_deriv_bnd} The constraint function $\cbm$ and its partial derivatives are bounded on the domain $\Ucal \times \Dcal$. 
\item\label{ah:r:du_lip} The Jacobian matrix
\begin{equation}
\pder{\rbm}{\ubm} : \Rbb^{N_\ubm}\times\Rbb^{N_\mubold} \rightarrow \Rbb^{N_\ubm\times N_\ubm}, \qquad
\pder{\rbm}{\ubm} : (\ubm,\mubold) \mapsto \pder{\rbm}{\ubm}(\ubm,\mubold)
\end{equation}
is Lipschitz continuous with respect to its first argument on the domain $\Ucal\times\Dcal$.
\item\label{ah:l:du_lip} The state derivative
\begin{equation}
\pder{\ell}{\ubm} : \Rbb^{N_\ubm}\times\Rbb^{N_\mubold} \rightarrow \Rbb^{1\times N_\ubm}, \qquad
\pder{\ell}{\ubm} : (\ubm,\mubold) \mapsto \pder{\ell}{\ubm}(\ubm,\mubold)
\end{equation}
is Lipschitz continuous with respect to its first argument on the domain $\Ucal\times \Dcal$.
\item\label{ah:r:dmu_lip} The parameter Jacobian matrix
\begin{equation}
\pder{\rbm}{\mubold} : \Rbb^{N_\ubm}\times\Rbb^{N_\mubold} \rightarrow \Rbb^{N_\ubm\times N_\mubold}, \qquad
\pder{\rbm}{\mubold} : (\ubm,\mubold) \mapsto \pder{\rbm}{\mubold}(\ubm,\mubold)
\end{equation}
is Lipschitz continuous with respect to its first argument on the domain $\Ucal\times\Dcal$.
\item\label{ah:l:dmu_lip} The parameter derivative
\begin{equation}
\pder{\ell}{\mubold} : \Rbb^{N_\ubm}\times\Rbb^{N_\mubold} \rightarrow \Rbb^{1\times N_\mubold}, \qquad
\pder{\ell}{\mubold} : (\ubm,\mubold) \mapsto \pder{\ell}{\mubold}(\ubm,\mubold)
\end{equation}
is Lipschitz continuous with respect to its first argument on the domain $\Ucal\times \Dcal$.
\item\label{ah:D:inv} The matrix function
\begin{equation}
\Dbm : \Rbb^{N_\ubm}\times\Rbb^{N_\ubm}\times\Rbb^{N_\mubold} \rightarrow \Rbb^{N_\ubm\times N_\ubm}, \qquad
\Dbm : (\ubm_1,\ubm_2,\zbm) \mapsto \int_0^1 \pder{\rbm}{\ubm}(\ubm_2 + t(\ubm_1-\ubm_2),\mubold) \, dt
\end{equation}
is invertible with bounded inverse on the domain $\Ucal\times\Ucal\times\Dcal$.
\item\label{ah:r:unique} For any $\mubold\in\Dcal$, there is a unique solution $\ubm^\star$ satisfying
$\rbm(\ubm^\star,\mubold) = \zerobold$ and the set of solutions
$\left\{ \ubm \in \Rbb^{N_\ubm} \suchthat \rbm(\ubm,\mubold) = \zerobold, \forall \mubold\in\Dcal\right\}$
is a bounded set.
\end{enumerate}
\end{assume}

\begin{assume}\label{assum:eqp}
Consider any open, bounded subset $\Ycal\subset\Rbb^n$. For any full-rank
reduced basis $\Phibold\in\Rbb^{N_\ubm\times n}$, we assume the hyperreduced residual function in (\ref{eqn:eqp_elem_res}) and AL function in (\ref{eqn:eqp_elem_f}) satisfy the following: for any $\rhobold\in\Rcal$,
\begin{enumerate}[label=\textbf{(AR\arabic*)}]
\item\label{ar:r:differ}$\tilde\rbm_\Phibold(\,\cdot\,,\,\cdot\,;\rhobold)$ is continuously differentiable with respect to both
arguments on the domain $\Ycal\times\Dcal$.
\item\label{ar:l:differ}$\tilde{\ell}_\Phibold(\,\cdot\,,\,\cdot\,;\rhobold)$ is continuously differentiable with respect to both arguments on the domain $\Ycal\times\Dcal$.
\item\label{ar:l:lip} $\tilde{\ell}_\Phibold(\,\cdot\,,\,\cdot\,;\rhobold)$ is Lipschitz continuous with respect to its first argument on the domain $\Ycal\times\Dcal$.
\item\label{ar:c:differ}$\tilde{\cbm}_\Phibold(\,\cdot\,,\,\cdot\,;\rhobold)$ is continuously differentiable with respect to both arguments on the domain $\Ycal\times\Dcal$.
\item\label{ar:c:val_deriv_bnd} The constraint function $\tilde{\cbm}_\Phibold(\,\cdot\,,\,\cdot\,;\rhobold)$ and its partial derivatives are bounded on the domain $\Ycal\times\Dcal$.
\item\label{ar:r:dy_lip} The Jacobian matrix
\begin{equation}
\pder{\tilde\rbm_\Phibold}{\tilde\ybm}(\,\cdot\,,\,\cdot\,;\rhobold) : \Rbb^{n}\times\Rbb^{N_\mubold} \rightarrow \Rbb^{n\times n}, \qquad
\pder{\tilde\rbm_\Phibold}{\tilde\ybm} : (\tilde\ybm,\mubold;\rhobold) \mapsto \pder{\tilde\rbm_\Phibold}{\tilde\ybm}(\tilde\ybm,\mubold;\rhobold)
\end{equation}
is Lipschitz continuous with respect to its first argument on the domain $\Ycal\times\Dcal$.

\item\label{ar:l:dy_lip} The state derivative
\begin{equation}
\pder{\tilde{\ell}_\Phibold}{\tilde\ybm}(\,\cdot\,,\,\cdot\,;\rhobold) : \Rbb^{n}\times\Rbb^{N_\mubold} \rightarrow \Rbb^{1\times n}, \qquad
\pder{\tilde{\ell}_\Phibold}{\tilde\ybm}(\,\cdot\,,\,\cdot\,;\rhobold) : (\tilde\ybm,\mubold;\rhobold) \mapsto \pder{\tilde{\ell}_\Phibold}{\tilde\ybm}(\tilde\ybm,\mubold;\rhobold)
\end{equation}
is Lipschitz continuous with respect to its first argument on the domain $\Ycal\times \Dcal$.
\item\label{ar:r:dmu_lip} The parameter Jacobian matrix
\begin{equation}
\pder{\tilde\rbm_\Phibold}{\mubold}(\,\cdot\,,\,\cdot\,;\rhobold) : \Rbb^{n}\times\Rbb^{N_\mubold} \rightarrow \Rbb^{n\times N_\mubold}, \qquad
\pder{\tilde\rbm_\Phibold}{\mubold} : (\tilde\ybm,\mubold;\rhobold) \mapsto \pder{\tilde\rbm_\Phibold}{\mubold}(\tilde\ybm,\mubold)
\end{equation}
is Lipschitz continuous with respect to its first argument on the domain $\Ycal\times\Dcal$.
\item\label{ar:l:dmu_lip}The parameter derivative
\begin{equation}
\pder{\tilde{\ell}_\Phibold}{\mubold}(\,\cdot\,,\,\cdot\,;\rhobold) : \Rbb^{n}\times\Rbb^{N_\mubold} \rightarrow \Rbb^{1\times N_\mubold}, \qquad
\pder{\tilde{\ell}_\Phibold}{\mubold} : (\tilde\ybm,\mubold;\rhobold) \mapsto \pder{\tilde{\ell}_\Phibold}{\mubold}(\tilde\ybm,\mubold;\rhobold)
\end{equation}
is Lipschitz continuous with respect to its first argument on the domain $\Ycal\times \Dcal$.
\item\label{ar:D:inv} The matrix function
\begin{equation}
\tilde\Dbm_\Phibold(\,\cdot\,,\,\cdot\,,\,\cdot\,;\rhobold) : \Rbb^n\times\Rbb^n\times\Rbb^{N_\mubold} \rightarrow \Rbb^{n \times n}, \qquad
\tilde\Dbm_\Phibold : (\tilde\ybm_1,\tilde\ybm_2,\zbm;\rhobold) \mapsto \int_0^1 \pder{\tilde\rbm_\Phibold}{\tilde\ybm}(\tilde\ybm_2 + t(\tilde\ybm_1-\tilde\ybm_2),\mubold;\rhobold) \, dt
\end{equation}
is invertible with bounded inverse on the domain $\Ycal\times\Ycal\times\Dcal$.
\item\label{ar:r:unique} For any $\mubold\in\Dcal$, there is a unique solution $\tilde\ybm^\star$ satisfying
$\tilde\rbm_\Phibold(\tilde\ybm^\star,\mubold;\rhobold) = \zerobold$, and the set of solutions
$ \left\{ \ybm \in \Rbb^n \suchthat \tilde\rbm_\Phibold(\ybm,\mubold;\rhobold) = \zerobold, \forall \mubold\in\Dcal\right\}$
is a bounded set. 
\end{enumerate}
\end{assume}

\begin{remark}
\ref{ar:r:differ}-\ref{ar:D:inv} follow directly from Assumption~\ref{assum:hdm} in the case where $\rhobold=\onebold$ because $\hat\rbm_\Phibold(\hat\ybm,\mubold) = \tilde\rbm_\Phibold(\hat\ybm,\mubold;\onebold)$
and the relationship between $\rbm$ and $\hat\rbm_\Phibold$ in (\ref{eqn:rom_res}).
\end{remark}

\section{Proof of residual-based output error estimates}
\label{sec:appendix_B}
We prove the Corollaries~\ref{cor:qoi_errbnd} and \ref{cor:qoi_grad_errbnd} of Theorem~\ref{the:qoi_grad_errbnd} in this section. First, we define the reduced objective function, $\hat j_\Phibold: \Rbb^n \times \Dcal \rightarrow \Rbb$, and hyperreduced objective function, $\tilde j_\Phibold: \Rbb^n \times \Dcal \times \Rcal \rightarrow \Rbb$, as
\begin{equation}
  \hat j_\Phibold:(\hat\ybm, \mubold) \mapsto j(\Phibold\hat\ybm, \mubold), \qquad
  \tilde j_\Phibold:(\tilde\ybm, \mubold; \rhobold) \mapsto \sum_{e=1}^{N_\mathtt{e}} \rho_e j_e(\Phibold\tilde\ybm, \mubold).
\end{equation}
Similarly, we define the reduced and hyperreduced constraint function,
$\hat\cbm_\Phibold: \Rbb^n \times \Dcal \rightarrow \Rbb^{N_\cbm}$ and
$\tilde\cbm_\Phibold: \Rbb^n \times \Dcal \times \Rcal \rightarrow \Rbb^{N_\cbm}$,
respectively, as
\begin{equation}
 \hat\cbm_\Phibold:(\hat\ybm, \mubold) \mapsto \cbm(\Phibold\hat\ybm, \mubold), \qquad
 \tilde\cbm_\Phibold:(\tilde\ybm, \mubold; \rhobold) \mapsto \sum_{e=1}^{N_\mathtt{e}} \rho_e \cbm_e(\Phibold\tilde\ybm, \mubold).
\end{equation}
In the following proofs, we drop the subscript $\Phibold$, EQP weights $\rhobold$, and input arguments to the of the solution terms for brevity.

\begin{proof}[Proof of Corollary~\ref{cor:qoi_errbnd}]
From Theorem~\ref{the:qoi_grad_errbnd} and the proposed training procedure (\ref{eqn:reduc_basis}) that guarantees $\ubm^\star \in \mathrm{Ran}~\Phibold$, we have
\begin{equation}
 \abs{f(\mubold;\thetabold, \tau) - \tilde{f}(\mubold;\thetabold,\tau)} \leq c_2\norm{\trbm(\hat\ybm^\star, \mubold)} + \abs{\hat\ell(\hat\ybm^\star, \mubold;\thetabold,\tau)-\tilde\ell(\hat\ybm^\star, \mubold;\thetabold,\tau)}
\end{equation}
because the HDM primal solution can be recovered by the ROM, i.e., $\ubm^\star = \Phibold \hat\ybm^\star$, which implies $\rbm(\Phibold\hat\ybm^\star,\mubold) = \zerobold$. Then, from the definitions introduced in Section~\ref{sec:hyperreduction} and some basic operations, we have
\begin{equation}
\begin{aligned}
&\abs{f(\mubold;\thetabold, \tau) - \tilde{f}(\mubold;\thetabold,\tau)} \\
&\leq c_2\norm{\trbm(\hat\ybm^\star, \mubold)} + \abs{\hat\ell(\hat\ybm^\star, \mubold;\thetabold,\tau)-\tilde\ell(\hat\ybm^\star, \mubold;\thetabold,\tau)}\\
&= c_2\norm{\trbm(\hat\ybm^\star, \mubold)} + \abs{\hat j(\hat\ybm^\star, \mubold)-\thetabold^T\hat\cbm(\hat\ybm^\star, \mubold)+\frac{\tau}{2}\norm{\hat\cbm(\hat\ybm^\star, \mubold)}^2-\tilde j(\hat\ybm^\star, \mubold) + \thetabold^T\tilde\cbm(\hat\ybm^\star, \mubold) -\frac{\tau}{2}\norm{\tilde\cbm(\hat\ybm^\star, \mubold)}^2}\\
&\leq c_2\norm{\trbm(\hat\ybm^\star, \mubold)} + \abs{\hat j(\hat\ybm^\star, \mubold) - \thetabold^T\hat\cbm(\hat\ybm^\star, \mubold) - \tilde j(\hat\ybm^\star, \mubold) + \thetabold^T\tilde\cbm(\hat\ybm^\star, \mubold)}+\\
&\quad \; \frac{\tau}{2}\abs{\paren{\norm{\hat\cbm(\hat\ybm^\star, \mubold)}-\norm{\tilde\cbm(\hat\ybm^\star, \mubold)}}\paren{\norm{\hat\cbm(\hat\ybm^\star, \mubold)}+\norm{\tilde\cbm(\hat\ybm^\star, \mubold)}}}\\
&\leq c_2\norm{\trbm(\hat\ybm^\star, \mubold)} + \abs{\hat\ell^\mathtt{L}(\hat\ybm^\star,\mubold;\thetabold)-\tilde\ell^\mathtt{L}(\hat\ybm^\star,\mubold;\thetabold)}+\frac{\tau}{2}\norm{\hat\cbm(\hat\ybm^\star, \mubold)-\tilde\cbm(\hat\ybm^\star, \mubold)}\paren{\norm{\hat\cbm(\hat\ybm^\star, \mubold)}+\norm{\tilde\cbm(\hat\ybm^\star, \mubold)}}\\
&\leq c_2\norm{\trbm(\hat\ybm^\star, \mubold)} + \abs{\hat\ell^\mathtt{L}(\hat\ybm^\star, \mubold;\thetabold)-\tilde\ell^\mathtt{L}(\hat\ybm^\star, \mubold;\thetabold)}+\tau c_3\norm{\hat\cbm(\hat\ybm^\star, \mubold)-\tilde\cbm(\hat\ybm^\star, \mubold)}, \\
\end{aligned}
\end{equation}
for some constant $c_3 > 0$, where the last inequality used boundedness of $\cbm$ and $\tilde\cbm$. Because $\rhobold$ is taken to be the solution of (\ref{eqn:linprog}) with
$\Ccal_{\Phibold,\Xibold,\deltabold}\subset\Ccal_{\Phibold,\Xibold,\delta_\mathtt{rp}}^\mathtt{rp}\cap\Ccal_{\Phibold,\Xibold,\delta_\mathtt{lq}}^\mathtt{lq}\cap\Ccal_{\Phibold,\Xibold,\delta_\mathtt{c}}^\mathtt{c}$, we have
\begin{equation}
\norm{\tilde\rbm(\hat\ybm^\star, \mubold)} \leq \delta_\mathtt{rp}, \quad
\abs{\hat\ell^\mathtt{L}(\hat\ybm^\star,\mubold;\thetabold)-\tilde\ell^\mathtt{L}(\hat\ybm^\star,\mubold;\thetabold)} \leq \delta_\mathtt{lq}, \quad
\tau\norm{\hat\cbm(\hat\ybm^\star, \mubold)-\tilde\cbm(\hat\ybm^\star, \mubold)} \leq \delta_\mathtt{c},
\end{equation}
which follows directly from (\ref{eqn:eqp:rescon}) and leads to the desired result in (\ref{eqn:eqp:qoi_errbnd2}).
\end{proof}

\begin{proof}[Proof of Corollary~\ref{cor:qoi_grad_errbnd}]
From Theorem~\ref{the:qoi_grad_errbnd} and the proposed training procedure (\ref{eqn:reduc_basis}) that guarantees $\ubm^\star, \lambdabold^\star \in \mathrm{Ran}~\Phibold$, we have
\begin{equation}
\norm{\nabla f(\mubold;\thetabold,\tau) - \nabla\tilde{f}(\mubold;\thetabold,\tau)} \leq c_3'\norm{\tilde\rbm(\hat\ybm^\star,\mubold)}+c_4'\norm{\tilde\rbm^\lambda(\hlam^\star, \hat\ybm^\star,\mubold;\thetabold,\tau)}+
 \norm{\hat\gbm^\lambda(\hlam^\star,\hat\ybm^\star,\mubold;\thetabold,\tau) - \tilde\gbm^\lambda(\hlam^\star,\hat\ybm^\star,\mubold;\thetabold,\tau)}
\end{equation}
because the HDM primal and adjoints solution can be recovered by the ROM, i.e., $\ubm^\star = \Phibold \hat\ybm^\star$ and $\lambdabold^\star = \Phibold\hat\lambdabold^\star$, which implies $\rbm(\Phibold\hat\ybm^\star,\mubold) = \zerobold$ and $\rbm^\lambda(\Phibold\hat\lambdabold^\star,\Phibold\hat\ybm^\star,\mubold) = \zerobold$. Then, from the definitions introduced in Section~\ref{sec:hyperreduction} and some basic operations, we have
\begin{equation}
\begin{aligned}
&\norm{\nabla f(\mubold;\thetabold,\tau) - \nabla\tilde{f}(\mubold;\thetabold,\tau)}\\ 
&\leq c_3'\norm{\tilde\rbm(\hat\ybm^\star,\mubold)}+c_4'\norm{\tilde\rbm^\lambda(\hlam^\star, \hat\ybm^\star,\mubold;\thetabold,\tau)}+
 \norm{\hat\gbm^\lambda(\hlam^\star,\hat\ybm^\star,\mubold;\thetabold,\tau) - \tilde\gbm^\lambda(\hlam^\star,\hat\ybm^\star,\mubold;\thetabold,\tau)}\\
&= c_3'\norm{\tilde\rbm(\hat\ybm^\star,\mubold)}
+c_4'\norm{\pder{\trbm}{\ybm}(\hat\ybm^\star,\mubold)^T\hlam^\star-\pder{\tilde{\ell}}{\ybm}(\hat\ybm^\star,\mubold;\thetabold,\tau)^T-\pder{\hrbm}{\ybm}(\hat\ybm^\star,\mubold)^T\hlam^\star+\pder{\hat{\ell}}{\ybm}(\hat\ybm^\star,\mubold;\thetabold,\tau)^T}+\\
&\quad \; \norm{\pder{\tilde{\ell}}{\mubold}(\hat\ybm^\star,\mubold;\thetabold,\tau)-{\hlam^\star}^T\pder{\trbm}{\mubold}(\hat\ybm^\star,\mubold)-\pder{\hat{\ell}}{\mubold}(\hat\ybm^\star,\mubold;\thetabold,\tau)+{\hlam^\star}^T\pder{\hrbm}{\mubold}(\hat\ybm^\star,\mubold)}\\
&= c_3'\norm{\tilde\rbm(\hat\ybm^\star,\mubold)}+
c_4'\norm{\tilde\rbm^{\mathtt{L},\lambda}(\hlam^\star,\hat\ybm^\star,\mubold;\thetabold)-\hat\rbm^{\mathtt{L},\lambda}(\hlam^\star,\hat\ybm^\star,\mubold;\thetabold)+\paren{\tau\tilde\cbm(\hat\ybm^\star,\mubold)^T\pder{\tilde\cbm}{\ybm}(\hat\ybm^\star,\mubold)-\tau\hat\cbm(\hat\ybm^\star,\mubold)^T\pder{\hat\cbm}{\ybm}(\hat\ybm^\star,\mubold)}}+\\
&\quad \; \norm{\tilde\gbm^{\mathtt{L},\lambda}(\hlam^\star,\hat\ybm^\star,\mubold;\thetabold)-\hat\gbm^{\mathtt{L},\lambda}(\hlam^\star,\hat\ybm^\star,\mubold;\thetabold)+\paren{\tau\tilde\cbm(\hat\ybm^\star,\mubold)^T\pder{\tilde\cbm}{\mubold}(\hat\ybm^\star,\mubold)-\tau\hat\cbm(\hat\ybm^\star,\mubold)^T\pder{\hat\cbm}{\mubold}(\hat\ybm^\star,\mubold)}}\\
&\leq c_3'\norm{\tilde\rbm(\hat\ybm^\star,\mubold)}+c_4'\norm{\tilde\rbm^{\mathtt{L},\lambda}(\hlam^\star,\hat\ybm^\star,\mubold;\thetabold)-\hat\rbm^{\mathtt{L},\lambda}(\hlam^\star,\hat\ybm^\star,\mubold;\thetabold)}+\tau c_4'\norm{\tilde\cbm(\hat\ybm^\star,\mubold)}\norm{\pder{\tilde\cbm}{\ybm}(\hat\ybm^\star,\mubold)-\pder{\hat\cbm}{\ybm}(\hat\ybm^\star,\mubold)}+\\
&\quad \; \tau c_4'\norm{\tilde\cbm(\hat\ybm^\star,\mubold)-\hat\cbm(\hat\ybm^\star,\mubold)}\norm{\pder{\hat\cbm}{\ybm}(\hat\ybm^\star,\mubold)}+\norm{\tilde\gbm^{\mathtt{L},\lambda}(\hlam^\star,\hat\ybm^\star,\mubold;\thetabold)-\hat\gbm^{\mathtt{L},\lambda}(\hlam^\star,\hat\ybm^\star,\mubold;\thetabold)}+\\
&\quad \; \tau\norm{\tilde\cbm(\hat\ybm^\star,\mubold)}\norm{\pder{\tilde\cbm}{\mubold}(\hat\ybm^\star,\mubold)-\pder{\hat\cbm}{\mubold}(\hat\ybm^\star,\mubold)}+\tau\norm{\tilde\cbm(\hat\ybm^\star,\mubold)-\hat\cbm(\hat\ybm^\star,\mubold)}\norm{\pder{\hat\cbm}{\mubold}(\hat\ybm^\star,\mubold)}\\
&\leq c_3'\norm{\tilde\rbm(\hat\ybm^\star,\mubold)}+c_4'\norm{\hat\rbm^{\mathtt{L},\lambda}(\hlam^\star,\hat\ybm^\star,\mubold;\thetabold)-\tilde\rbm^{\mathtt{L},\lambda}(\hlam^\star,\hat\ybm^\star,\mubold;\thetabold)}+\norm{\hat\gbm^{\mathtt{L},\lambda}(\hlam^\star,\hat\ybm^\star,\mubold;\thetabold)-\tilde\gbm^{\mathtt{L},\lambda}(\hlam^\star,\hat\ybm^\star,\mubold;\thetabold)} + \\
&\quad \; +\tau c_5'\norm{\hat\cbm(\hat\ybm^\star,\mubold)-\tilde\cbm(\hat\ybm^\star,\mubold)}+\tau c_6'\norm{\partial_\ybm\hat\cbm(\hat\ybm^\star,\mubold)-\partial_\ybm\tilde\cbm(\hat\ybm^\star,\mubold)} 
+ \tau c_7'\norm{\partial_\mubold\hat\cbm(\hat\ybm^\star,\mubold)-\partial_\mubold\tilde\cbm(\hat\ybm^\star,\mubold)}
\end{aligned}
\end{equation}
for some constants $c_5',c_6',c_7' > 0$, where the last inequality used boundedness of $\cbm$, $\tilde\cbm$, and their partial derivatives. Because $\rhobold$ is taken to be the solution of (\ref{eqn:linprog}) with
$\Ccal_{\Phibold,\Xibold,\deltabold}\subset
\Ccal_{\Phibold,\Xibold,\delta_\mathtt{rp}}^\mathtt{rp} \cap \Ccal_{\Phibold,\Xibold,\delta_\mathtt{lra}}^\mathtt{lra} \cap
\Ccal_{\Phibold,\Xibold,\delta_\mathtt{lga}}^\mathtt{lga} \cap
\Ccal_{\Phibold,\Xibold,\delta_\mathtt{c}}^\mathtt{c} \cap
\Ccal_{\Phibold,\Xibold,\delta_\mathtt{dcy}}^\mathtt{dcy} \cap
\Ccal_{\Phibold,\Xibold,\delta_\mathtt{dc\mu}}^\mathtt{dc\mu}
$, we have
\begin{equation}
\begin{aligned}
 \norm{\tilde\rbm(\hat\ybm^\star,\mubold)} &\leq \delta_\mathtt{rp} \\
 \norm{\hat\rbm^{\mathtt{L},\lambda}(\hlam^\star, \hat\ybm^\star,\mubold;\thetabold)-\tilde\rbm^{\mathtt{L},\lambda}(\hlam^\star, \hat\ybm^\star,\mubold;\thetabold)} &\leq \delta_\mathtt{lra} \\
 \norm{\hat\gbm^{\mathtt{L},\lambda}(\hlam^\star,\hat\ybm^\star,\mubold;\thetabold)-\tilde\gbm^{\mathtt{L},\lambda}(\hlam^\star,\hat\ybm^\star,\mubold;\thetabold)} &\leq \delta_\mathtt{lga} \\
\tau\norm{\hat\cbm(\hat\ybm^\star,\mubold) - \tilde\cbm(\hat\ybm^\star,\mubold)} &\leq \delta_\mathtt{c} \\
\tau\norm{\partial_\ybm\hat\cbm(\hat\ybm^\star,\mubold) - \partial_\ybm\tilde\cbm(\hat\ybm^\star,\mubold)} &\leq \delta_\mathtt{dcy} \\
\tau\norm{\partial_\mubold\hat\cbm(\hat\ybm^\star,\mubold) - \partial_\mubold\tilde\cbm(\hat\ybm^\star,\mubold)} &\leq \delta_\mathtt{dc\mu},
\end{aligned}
\end{equation}
which follows directly from (\ref{eqn:eqp:rescon}) and leads to the desired result in (\ref{eqn:eqp:qoi_grad_errbnd2}).
\end{proof}

\section*{Acknowledgments}
This material is based upon work supported by the Air Force Office of
Scientific Research (AFOSR) under award numbers FA9550-20-1-0236
and FA9550-22-1-0004, and National Science Foundation (NSF) under
award number NSF CBET-2338843. The content of this publication does not
necessarily reflect the position or policy of any of these supporters,
and no official endorsement should be inferred.

\bibliographystyle{plain}
\bibliography{biblio.bib}

\end{document}